\numberwithin{equation}{section}
\newcommand{\bfu}{\vc{u}}
\newcommand{\bfq}{\vc{q}}
\newcommand{\bfF}{\vc{F}}
\newcommand{\bfphi}{\boldsymbol{\varphi}}
\newcommand{\bfvarphi}{\boldsymbol{\varphi}}
\newcommand{\pas}{\prst\mbox{-a.s.}}
\newcommand{\D}{{\rm d}}
\newcommand{\W}{W}
\newcommand{\intQ}[1]{\int_{Q} #1 \ \dx}
\newcommand{\Q}{\mathbb{T}^3}
\newcommand{\tvre}{\tilde{\vr}_\ep}
\newcommand{\tvte}{\tilde{\vt}_\ep}
\newcommand{\tvue}{\tilde{\vu}_\ep}
\newcommand{\tvrd}{\tilde{\vr}_\delta}
\newcommand{\tvtd}{\tilde{\vt}_\delta}
\newcommand{\tvud}{\tilde{\vu}_\delta}
\newcommand{\vr}{\varrho}
\newcommand{\vu}{\vc{u}}
\newcommand{\vc}[1]{{\bf #1}}
\newcommand{\tor}{\mathbb{T}^3}
\newcommand{\expe}[1]{ \mathbb{E} \left[ #1 \right] }
\newcommand{\Div}{\divergence}
\newcommand{\ep}{\varepsilon}
\newcommand{\R}{\mathbb R}
\newcommand{\E}{\mathbb E}
\newcommand{\p}{\mathbb P}
\newcommand{\dd}{\mathrm{d}}
\newcommand{\dx}{\,\mathrm{d}x}
\newcommand{\dt}{\,\mathrm{d}t}
\newcommand{\8}{\infty}
\newcommand{\tens}{\tn}
\newcommand{\dif}{\mathrm{d}}
\newcommand{\mf}{\mathfrak{F}}
\newcommand{\prst}{\mathbb{P}}
\newcommand{\mn}{\mathbb{N}}
\newcommand{\mt}{\mathbb{T}^3}
\DeclareMathOperator{\diver}{div}
\DeclareMathOperator{\divergence}{div}
\newcommand{\bu}{\mathbf u}
\newcommand{\bfG}{\mathbf G}
\newcommand{\bFormula}[1]{
\begin{equation} \label{#1}}
\newcommand{\eF}{\end{equation}}
\newcommand{\Ov}[1]{\overline{#1}}
\newcommand{\DC}{C^\infty_c}
\newcommand{\aleq}{\stackrel{<}{\sim}}
\newcommand{\vre}{\vr_\ep}
\newcommand{\vue}{\vu_\ep}
\newcommand{\vtd}{\vartheta_\delta}
\newcommand{\cal}{\mathcal}
\newcommand{\vt}{\vartheta}
\newcommand{\vte}{\vt_\ep}
\newcommand{\bfS}{\mathbb S}
\newcommand{\Grad}{\nabla}
\newcommand{\tn}[1]{\mathbb{ #1 }}
\newcommand{\dxdt}{\dx \ \dt}
\newcommand{\intO}[1]{\int_{\mt} #1 \ \dx}
\newcommand{\intTor}[1]{\int_{\tor} #1 \ \dx}
\newcommand{\Del}{\Delta}
\newcommand{\TN}{{\mathbb{T}^3}}
\newtheorem{theorem}{Theorem}[section]
\newtheorem{Theorem}{Theorem}[section]
\newtheorem{proposition}{Proposition}[section]
\newtheorem{Proposition}{Proposition}[section]
\newtheorem{Lemma}{Lemma}[section]
\newtheorem{Remark}{Remark}[section]
\newtheorem{definition}{Definition}[section]
\begin{document}


\title{Stochastic Navier--Stokes--Fourier equations}

\author{Dominic Breit}
\address[D. Breit]{
Department of Mathematics, Heriot-Watt University, Riccarton Edinburgh EH14 4AS, UK}
\email{d.breit@hw.ac.uk}

\author{Eduard Feireisl}
\address[E. Feireisl]{Institute of Mathematics of the Academy of Sciences of the Czech Republic, \v Zitn\' a 25, CZ-115 67 Praha 1, Czech Republic}
\email{feireisl@math.cas.cz}


\begin{abstract}
We study the full Navier--Stokes--Fourier system governing the motion of a general viscous, heat-conducting, and compressible fluid subject to
stochastic perturbation. Stochastic effects are implemented through (i) random initial data, (ii) a forcing term in the momentum equation represented by a
multiplicative white noise, (iii) random heat source in the internal energy balance.
We establish existence of a weak martingale solution under physically grounded structural assumptions.
As a byproduct of our theory we can show that stationary martingale solutions only exist in certain trivial cases.
\end{abstract}

\subjclass[2010]{60H15, 35R60, 76N10,  35Q30}
\keywords{Compressible fluids, stochastic Navier--Stokes--Fourier system, weak solution, martingale solution}

\date{\today}

\maketitle


\section{Introduction}

There is an abundant amount of literature concerning notably the stochastic perturbations of the Navier--Stokes system in the context of incompressible
fluid flows starting with \cite{BeTe}. The existence of weak martingale solutions -- these solutions are weak in the analytical sense (derivatives only exist in the sense of distributions) and weak in the probabilistic sense (the underlying probability space is part of the solution) --
was first shown in \cite{franco}.
Definitely much less is known if compressibility of the
fluid is taken into account. Similarly to \cite{BeTe}, first existence results were based on a suitable transformation formula.
It allows to reduce the problem to a random system of PDEs, where the stochastic integral does no longer appear. Existence of solutions in this semi-deterministic setting have been shown in \cite{feireisl2} (see also \cite{MR1760377} for the 1D case in the Lagrange coordinates and \cite{MR1807944} for a rather artificial periodic 2D case).
The first ``truly'' stochastic existence result for the compressible Navier--Stokes system perturbed by a general nonlinear multiplicative noise was obtained by Breit and Hofmanov\'a \cite{BrHo} for periodic boundary conditions (see \cite{2015arXiv150400951S} for the extension to the homogeneous Dirichlet boundary conditions and \cite{romeo}, where the system is studied on the whole space). It is based on the concept of finite energy weak
martingale solutions. As in \cite{franco} these solutions are weak in the analytical sense and weak in the probabilistic sense. Moreover, the time-evolution of their energy can be controlled in terms of the initial energy. This allows to study some asymptotic properties of the system, see \cite{BFHa}, \cite{BFH} and \cite{romeo}.\\
All these results are concerned with the barotropic case.
The natural next step is to take additionally into account the transfer of heat.
The aim of this paper is to develop a consistent mathematical theory of viscous, compressible, and  heat-conducting
fluid flows driven
by stochastic external forces.
\\
The motion of the fluid is described by the standard field variables: the mass density $\varrho=\varrho(t,x)$; the absolute temperature $\vartheta(t,x)$; the velocity field $\bfu=\bfu(t,x)$
evaluated at the time $t$ and the spatial position $x$ belonging to the reference physical domain $Q \subset\mathbb R^3$. The time evolution of the fluid
is governed by the full Navier--Stokes--Fourier system with stochastic forcing:
\begin{subequations}\label{eq:1}
\begin{align}
\dd\varrho+\Div (\varrho\bfu)\dt&=0 ,\label{eq:12}\\
\dd(\varrho\bfu)+ \left[ \Div(\rho\bfu\otimes\bfu)  + \Grad p(\vr, \vt) \right] \dt &= \Div \bfS (\vt, \Grad \vu) \dt+\varrho{\vc{F}}(\varrho,\vt,\bfu)\,\dd W,\,\,\label{eq:11}\\
\dd(\varrho e(\vr, \vt))&+\big[\Div(\varrho e(\vr, \vt) \bfu)+\Div\bfq (\vt, \Grad \vt) \big]\dt\label{eq:13}\\&=\big[\bfS(\vt, \Grad \vu) :\nabla\bfu-p(\vr, \vt) \Div\bfu\big]\dt + \vr H \dt.
 \nonumber
\end{align}
\end{subequations}
\\
The field equations (\ref{eq:1})
describe the balance of mass, momentum and internal energy.
They must be supplemented with a set of
constitutive relations characterizing the material properties of a concrete fluid.
In particular, the viscous stress tensor $\bfS$, the internal energy flux $\bfq$ as well as
the thermodynamic functions $p$ (pressure) and $e$ (specific internal energy) must be determined in terms of the
independent state variables $(\varrho,\vartheta,\bfu)$. For the viscous stress tensor we suppose
Newton's rheological law
\begin{align}\label{eq:nr}
\bfS=\bfS(\vartheta,\nabla \bfu)=\mu(\vartheta)\Big(\nabla\bfu+\nabla\bfu^T-\frac{2}{3}\Div\bfu\,\mathbb I\Big)+\eta(\vartheta)\Div\bfu\,\mathbb I.
\end{align}
The internal energy (heat) flux is determined by Fourier's law
\begin{align}\label{eq:fl}
\bfq=\bfq(\vartheta,\nabla\vartheta)=-\kappa(\vartheta)\nabla\vartheta=-\nabla\mathcal K(\vartheta),\quad \mathcal K(\vartheta)=\int_0^\vartheta\kappa(z)\,\dd z.
\end{align}
The thermodynamic functions $p$ and $e$ are related to the (specific) entropy $s = s(\vr, \vt)$ through {Gibbs' equation}
\bFormula{m97} \vartheta D s (\varrho, \vartheta) = D e (\varrho,
\vartheta) + p (\varrho, \vartheta) D \Big( \frac{1}{\varrho} \Big)
\ \mbox{for all} \ \varrho, \vartheta > 0.
\eF
\\
Randomness in the time evolution of the system is enforced in {three} ways: {\bf(i)} the initial state (data) is random;
{\bf (ii)} the heat source $H$ is a random variable that may depend on time, {\bf(iii)}
the driving process $W$ is a cylindrical Wiener process defined on some probability space, with the diffusion coefficient $\vr \vc{F}$ that may depend on the spatial variable $x$ as well as on the state variables $\varrho$, $\vartheta$ and $\bfu$.
The precise description of the problem setting will be given in  Section \ref{sec:framework}.\\
Our main result, stated in Theorem \ref{thm:main} below, asserts the existence of a \emph{martingale solution} to
a suitable weak formulation \eqref{eq:1}--\eqref{m97} with respect to conservative boundary conditions
\begin{equation} \label{beq:1}
\vu|_{\partial Q} = 0,\ \Grad \vt \cdot \vc{n}|_{\partial Q} = 0.
\end{equation}
We combine the deterministic approach for the Navier--Stokes--Fourier system developed in \cite{F} with the stochastic theory \cite{BrHo} for the barotropic system.
\\
In contrast with the earlier {approach} proposed in \cite{fei3}, the existence theory in \cite{F} is built up around the Second law of thermodynamics. In view of Gibb's relation (\ref{m97}),
the internal energy equation \eqref{eq:13} can be rewritten in the form of the entropy balance
\begin{align}\label{eq:13'}
\dd(\varrho s)+\Big[\Div(\varrho s\bfu)+\Div\Big(\frac{\bfq}{\vartheta}\Big)\Big]\dt=\sigma\dt + \vr \frac{{H}}{\vt} \dt
\end{align}
with the entropy production rate
\begin{align}\label{eq:0202}
\sigma=\frac{1}{\vartheta}\Big(\bfS:\nabla\bfu-\frac{\bfq\cdot\nabla\vartheta}{\vartheta}\Big).
\end{align}
In view of possible, but in {the case of viscous fluids} still only hypothetical singularities, it is convenient
to relax the equality sign in \eqref{eq:0202} to the inequality
\begin{align}\label{eq:0202bis}
\sigma \geq \frac{1}{\vartheta}\Big(\bfS:\nabla\bfu-\frac{\bfq\cdot\nabla\vartheta}{\vartheta}\Big).
\end{align}
Well posedness of the problem is then formally guaranteed by augmenting the system by the total energy balance
\begin{equation} \label{eq0202bisE}
\dd \int_{Q} \left[ \frac{1}{2} \vr |\vu|^2 + \vr e \right] \dx  = \int_{Q} \vr \vc{F} \cdot \vu \ \dd W + \int_{Q} \frac{1}{2} \vr |\vc{F}|^2 \dt
+ \int_{Q} \vr H \dt,
\end{equation}
cf. \cite[Chapter 2]{F}.
Note that validity of (\ref{eq0202bisE}) requires the system to be energetically insulated - the total energy flux through the boundary must vanish at any time
in accordance with (\ref{beq:1}).
{In comparison with \cite{F}, equality (\ref{eq0202bisE}) contains the contribution of the stochastic driving force here interpreted in It\^{o}'s sense.}
\\
The main advantage of the entropy formulation is the possibility to deduce a relative energy inequality derived for the deterministic
Navier--Stokes--Fourier system in \cite{FeiNov10},
and adapted to the barotropic \emph{stochastic} Navier--Stokes system in \cite{BFH}. In particular, we may expect the strong solutions to be
stable in the larger class of weak solutions
(weak--strong uniqueness). Besides, there are other interesting properties derived for simpler systems in \cite{BFH} and \cite{FeiNov10} that are likely to extend to the full thermodynamic framework. Note that a weak formulation based on the internal energy balance in the spirit of \cite{fei3} apparently does not enjoy these properties unless
the weak solution is quite ``regular'', see \cite{FeSu2015_N1}. We remark that a stochastic version of \cite{fei3} recently appeared in \cite{SmTr}.
\\
The total energy balance (\ref{eq0202bisE}) is considered as an integral part of the definition of weak solutions. This excludes
any kind of semi-deterministic approach in the spirit of \cite{BeTe} or \cite{feireisl2}. Instead, we adapt
the multi--layer approximation scheme developed in \cite[Chapter 3]{F} combined with a refined stochastic compactness method based on the Jakubowski-Skorokhod representation theorem, cf. \cite{jakubow}. Although a similar idea has been applied to the barotropic Navier--Stokes system in \cite{BrHo}, the explicit
dependence of the diffusion coefficient $\vc{F}$ on the temperature along with the total energy balance appended to the problem give rise to rather
challenging new difficulties pertinent to the complete, meaning energetically closed, fluid system. {One of the most
subtle among them is the necessity to perform the change of probability space via Skorokhod--Jakubowski representation theorem
\emph{before} showing compactness of the arguments in the diffusion coefficients $\vc{F}$. This is in sharp contrast with the method developed in \cite{BrHo},
where the new probability space emerged in a natural way only at the end of each approximate step of the construction of weak solutions.}
\\
Following \cite[Chapter 3]{F}
we { start with the original (internal energy) formulation
\eqref{eq:1} regularized via artificial viscosity ($\varepsilon$-layer), artificial pressure ($\delta$-layer),
as well as other stabilizing terms,
see Section \ref {sec:galerkin}.} The so-obtained system is then solved by a Galerkin approximation. Here, the momentum equation is solved in a finite-dimensional subspace
whereas we keep continuity and internal energy equation in a continuous framework ($m$-layer). This has the advantage that the maximum principle applies and both density and temperature remain positive. We follow the approach from \cite{BFHbook}:
Instead of introducing a stopping-time as in \cite{BrHo} we cut several nonlinearities ($R$-layer) and gain the existence of a strong solution to the so-obtained system \eqref{wW}, see {Theorem \ref{wWP1}}. Also differently from \cite{BrHo} we do not apply a fixed-point argument to get a solution to \eqref{wW}. Instead we apply a simple time-step by means of a modification of the Cauchy collocation method, see \eqref{wWS14}--\eqref{wWS16}. The latter one can be solved immediately. Eventually, we pass to the limit in the time-step in Section \ref{wWS1S2}. {At this stage, it is important to keep the temperature strictly positive in order to
divide finally equation (\ref{eq:13}) obtaining the entropy formulation, cf. Section \ref{EGA}}.\\
{The remaining part of the existence proof leans on the entropy formulation, with (\ref{eq:13}) replaced by (\ref{eq:13'}) and (\ref{eq0202bisE}). In Section
\ref{EGA} we perform the limit in the Galerkin approximation scheme (limit $m \to \infty$) obtaining the artificial viscosity approximation. In Section \ref{sec:vanishingviscosity},
we get rid of the artificial viscosity and related stabilizing terms (limit $\ep \to 0$). Finally, in Section \ref{sec:vanishingpressure},
we remove the remaining artificial terms recovering a weak solution of the original system (limit $\delta \to 0$).}

\section{Mathematical framework and the main result}
\label{sec:framework}

Due to the lack of regularity of the unknown fields, in particular with respect to the time variable, we follow the ``weak'' approach developed in \cite{BFHbook}.

\subsection{Random variables (distributions)}
\label{sec:prelimsstoch}

Let $Q_T = (0,T) \times Q$. Let $(\Omega,\mf,(\mf_t)_{t\geq0},\prst)$ be a complete stochastic basis with a Borel probability measure $\prst$ and a right-continuous filtration
$(\mf_t)$.
The majority of the random fields we deal with are vector valued functions $\vc{U} \in L^1(Q_T; R^M)$ depending on the random parameter $\omega \in \Omega$. We say that
$\vc{U}$ is a random variable, if all functions $\omega \mapsto \int_{Q_T} \vc{U} \cdot \bfphi \dxdt$ are $\prst-$measurable for any $\bfphi \in \DC(Q_T; R^M)$.
We also introduce a \emph{natural filtration or history} of $\vc{U}$ up to a time $\tau$,
\[
\sigma_\tau[ \vc{U} ] = \sigma \left\{  \left\{ \int_{Q_T} \vc{U} \cdot \bfphi \dxdt < a \right\};\ a \in R,\ \bfphi \in \DC(Q_\tau; R^M) \right\}.
\]
\\
We say that $\vc{U}$ is \emph{progressively $(\mf_t)$-measurable} if $\sigma_\tau [\vc{U}] \subset \mf_\tau$ for any $\tau \geq 0$.
\\
It is convenient to consider a random variable $\vc{U} \in L^1(Q_T; R^M)$ as a distribution ranging in a larger space $\vc{U} \in W^{-k,2}(Q_T; R^M)$, where the latter
is a separable Hilbert space; whence Polish. Note that, by virtue of the standard embedding
$W^{k,2}_0 \hookrightarrow\hookrightarrow C$ as soon as $k > \frac{5}{2}$, we have $L^1(Q_T) \hookrightarrow\hookrightarrow W^{-k,2}(Q_T)$. Accordingly, any such
$\vc{U}$ may be viewed as a Borel random variable on the Polish space $W^{-k,2}(Q_T; R^M)$.

\subsubsection{Initial data}

In the context of martingale solutions, the initial state of the system is prescribed in terms of a law $\Lambda$ - a Borel measure defined on a suitable
function space. 
We consider $(\vr_0, \vt_0, \vu_0)$ to describe the initial state.
Accordingly, we consider $\Lambda$ defined on $L^1(Q) \times L^1(Q) \times L^1(Q; R^3)$ satisfying
\begin{equation} \label{Ida}
\begin{split}
\Lambda &\left\{ \vr_0 \geq 0,\ \vt_0 > 0, \  0 < \underline{\vr} < (\vr_0)_Q \equiv \frac{1}{|Q|} \intQ{ \vr_0 } < \Ov{\vr} \right\} = 1,\\
\int_{L^1 \times L^1 \times L^1} &\left\| \vr_0 |\vu_0|^2 + \vr_0 e(\vr_0, \vt_0) + \vr_0 |s(\vr_0, \vt_0)| \right\|_{L^1(Q)}^r \ {\rm d}\Lambda
\leq c(r),\ \mbox{for all}\ r \geq 1.
\end{split}
\end{equation}
Here $\underline{\vr}$, $\Ov{\vr}$ are two deterministic constants.

\subsubsection{Mechanical bulk force}

The process $W$ is a cylindrical Wiener process, that is, $W(t)=\sum_{k\geq1}\beta_k(t) e_k$ with $(\beta_k)_{k\geq1}$ being mutually independent real-valued standard Wiener processes relative to $(\mf_t)_{t\geq0}$. Here $(e_k)_{k\geq1}$ denotes a complete orthonormal system in a sepa\-rable Hilbert space $\mathfrak{U}$. In addition, we introduce an auxiliary space $\mathfrak{U}_0\supset\mathfrak{U}$ via
$$\mathfrak{U}_0=\bigg\{v=\sum_{k\geq1}\alpha_k e_k;\;\sum_{k\geq1}\frac{\alpha_k^2}{k^2}<\infty\bigg\},$$
endowed with the norm
$$\|v\|^2_{\mathfrak{U}_0}=\sum_{k\geq1}\frac{\alpha_k^2}{k^2},\qquad v=\sum_{k\geq1}\alpha_k e_k.$$
Note that the embedding $\mathfrak{U}\hookrightarrow\mathfrak{U}_0$ is Hilbert-Schmidt. Moreover, trajectories of $W$ are $\prst$-a.s. in $C([0,T];\mathfrak{U}_0)$ (see \cite{daprato}).\\
Choosing $\mathfrak{U}= \ell^2$ we may identify the diffusion coefficients
$(\vc{F} e_k )_{k \geq 1}$ with a sequence of real functions
$(\vc{F}_k)_{k \geq 1}$,
\[
\vr \vc{F}(\vr, \vt, \vu) \dd W = \sum_{k=1}^\infty \vr \vc{F}_k(x, \vr, \vt, \vu) \dd \beta_k.
\]
\\
We suppose that $\vc{F}_k$ are smooth in their arguments, specifically,
\[
\vc{F}_k \in C^1( \Ov{Q} \times [0, \infty)^2 \times R^3; R^3),
\]
where
\begin{equation} \label{P-1}
\| {\vc{F}_k}(\cdot, \cdot, \cdot, 0) \|_{L^\infty} + \| {\nabla_{x,\vr,s, \vc{u}}} \vc{F}_k \|_{L^\infty} \leq f_k,\ \sum_{k=1}^\infty f_k^2 < \infty.
\end{equation}
Let us remark that \eqref{P-1} implies that $\bfF$ is bounded in $\varrho$ and $\vartheta$ but may grow linearly in $\bfu$.
We easily deduce from \eqref{P-1} the following bound
\[
\| \vr \vc{F}_k (\vr, \vt, \vu) \|_{W^{-k,2}(Q; R^3)} \aleq \| \vr \vc{F}_k (\vr, \vt, \vu)  \|_{L^1 (Q; R^3)} \aleq f_k
\left( \| \vr \|_{L^1(Q)} +\| \vr \vu \|_{L^1(Q; R^3)} \right)
\]
whenever $k > \frac{3}{2}$.
Accordingly, the stochastic integral
\[
\int_0^\tau \vr \vc{F} \dd W = \sum_{k = 1}^\infty \int_0^\tau \vr \vc{F}_k(\vr, \vt, \vu) \ \dd \beta_k
\]
can be identified with an element of the {Banach space space $C([0,T]; W^{-k,2}(Q))$},
\[
\begin{split}
&\intQ{ \left( \int_0^\tau \vr \vc{F}(\vr, \vt, \vu) \dd W \cdot \bfvarphi \right) } \\ &= \sum_{k=1}^\infty \int_0^\tau \left(
\intQ{ \vr \vc{F}_k(x,\vr, \vt, \vu) \cdot \varphi } \right) \dd \beta_k, \ \bfvarphi \in W^{k,2}(Q; R^3),\ k > \frac{3}{2}.
\end{split}
\]

\subsubsection{Heat source}

Similarly to \cite[Chapter 3, Section 3.1]{F}, the
heat source $H$ may depend on both $t$ and $x$, specifically, $H \in C([0,T] \times \Ov{Q})$. In addition, we suppose
\begin{equation} \label{ID3}
0 \leq H \leq \Ov{H},\ H \ \ (\mf_t) -\mbox{progressively measurable},
\end{equation}
where $\Ov{H}$ is a deterministic constant.
\\
A heat source appears in numerous real--world applications (see e.g. \cite{F}); therefore we find it important to include it in the existence theory.
From the mathematical point of view, however, its presence in the system can be accommodated rather easily at the same level of difficulty as the random initial data.

\subsection{Structural and constitutive assumptions}
\label{H}

Besides Gibbs' equation (\ref{m97}), we impose several restrictions on the specific shape of the thermodynamic functions
$p=p(\vr,\vt)$, $e=e(\vr,\vt)$ and $s=s(\vr,\vt)$. They are borrowed from \cite[Chapter 1]{F}, to which we refer
for the physical background and the relevant discussion.\\
We consider the pressure $p$ in the form
\bFormula{m98}
p(\vr, \vt) = p_M(\vr, \vt) + \frac{a}{3} \vt^4, \ a > 0,\
p_M(\vr, \vt) = \vt^{5/2} P\left( \frac{\vr}{\vt^{3/2}} \right),
\eF
\begin{equation} \label{mp8a}
e(\vr, \vt) = e_M(\vr, \vt) + {a} \frac{\vt^4}{\vr},\ e_M(\vr, \vt) = \frac{3}{2} \frac{p_M(\vr, \vt)}{\vr} =
\frac{3}{2} \frac{\vt^{5/2}}{\vr} P\left( \frac{\vr}{\vt^{3/2}} \right),
\end{equation}
\begin{equation}\label{md8!}
s(\vr, \vt) = s_M(\vr, \vt) + \frac{4a }{3} \frac{\vt^3}{\vr},\
s_M(\vr, \vt) = S \left( \frac{\vr} {\vt^{3/2}} \right),
\end{equation}
\begin{equation}\label{md8}
S = S(Z),\
S'(Z)=-\frac 32 \frac{\frac 53 P(Z)-ZP'(Z)}{Z^2}<0,
\end{equation}
where
\begin{equation}\label{m103-}
P \in C^1[0, \infty) \cap C^2(0, \infty),\; P(0)= 0,\;P'(Z)>0, \ \mbox{for all}\ {Z \geq 0},
\end{equation}
\begin{equation}\label{md7}
0<\frac 32 \frac{\frac 53 P(Z)-ZP'(Z)}{Z}  <c,\; \mbox{for all}\ Z > 0,
\end{equation}
and
\begin{equation}\label{md1}
\lim_{Z\to\infty}\frac {P(Z)} {Z^{5/3}}=p_\infty>0.
\end{equation}
\\
The viscosity coefficients $\mu$, $\eta$ are continuously
differentiable functions of the absolute temperature $\vt$, more
precisely $\mu, \ \lambda  \in C^1[0,\infty)$, satisfying
\bFormula{m105} 0 < \underline{\mu}(1 + \vartheta) \leq
\mu(\vartheta) \leq \Ov{\mu}(1 + \vartheta), \eF
\bFormula{*m105*} \sup_{\vartheta\in [0, \infty)}\big(|\mu'(\vartheta)|+|\lambda'(\vartheta)|\big)\le
\overline m, \eF
\bFormula{m106} 0 \leq \lambda (\vartheta) \leq \Ov{\lambda}(1 +
\vartheta). \eF
\\
The heat conductivity coefficient $\kappa \in C^1[0, \infty)$ satisfies
\bFormula{m108} 0 < \underline{\kappa} (1 + \vartheta^3) \leq \kappa(
\vartheta) \leq \Ov{\kappa} (1 + \vartheta^3).
\eF

\subsection{Martingale solutions}
\label{subsec:solution}

The solutions to \eqref{eq:1} will be weak in both probabilistic and PDE sense. From the point of view of the theory of PDEs, we follow the approach of \cite{F} and consider so-called finite energy weak solutions satisfying
the momentum balance (\ref{eq:11}), the equation of continuity (\ref{eq:12}), together with the entropy balance (\ref{eq:13'}), (\ref{eq:0202bis}), and
the total energy balance (\ref{eq0202bisE}). Solutions will satisfy these relations in the weak PDE sense, meaning all derivatives are understood in the sense of distributions.\\
From the probabilistic point of view, two concepts of solution are typically considered in the theory of stochastic evolution equations, namely, pathwise (or strong) solutions and martingale (or weak) solutions. In the former notion the underlying probability space as well as the driving process is fixed in advance while
in the latter case these stochastic elements become part of the solution of the problem. Clearly, existence of a pathwise solution is stronger and implies existence of a martingale solution. In the present work we are only able to establish existence of a martingale solution to \eqref{eq:1}. Due to the classical Yamada-Watanabe-type argument (see, e.g., \cite{krylov} and \cite{pr07}), existence of a pathwise solution would then follow if pathwise uniqueness held true. However, uniqueness for the Navier--Stokes--Fourier equations for compressible fluids is an open problem even in the deterministic setting. In hand with this issue goes the
the fact that the initial state is determined only by the measure $\Lambda$ introduced in (\ref{Ida}).

Let us summarize the above in the following definition.

\begin{definition}[Martingale solution]\label{def:sol}
Let $Q \subset R^3$ be a bounded domain of class $C^{2 + \nu}$, $\nu > 0$.
Let $\Lambda$ be a Borel probability measure on $L^{1}(Q)\times L^1(Q)\times L^1(Q; R^3)$ enjoying the properties
(\ref{Ida}).
Let $H \in C([0,T] \times \Ov{Q})$ be given satisfying (\ref{ID3}).
\\
Then
$$\big((\Omega,\mf,(\mf_t),\prst),\varrho,\vartheta,\bfu,W)$$
is called \emph{(weak) martingale solution} to problem \eqref{eq:1}, (\ref{beq:1}) with the initial data $\Lambda$ provided the following holds.
\begin{enumerate}
\item $(\Omega,\mf,(\mf_t),\prst)$ is a stochastic basis with a complete right-continuous filtration;
\item $W$ is an $(\mf_t)$-cylindrical Wiener process;
\item the random variables
\[
\vr \in L^1(Q_T),\ \vt \in L^1(Q_T),\ \vu \in L^2(0,T; W^{1,2}_0(Q; R^3))
\]
are progressively $(\mf_t)$-measurable, $\vr \geq 0$, $\vt > 0$ $\pas$;
\item the density $\vr \in C_w(0,T; L^{5/3}(Q))$,\\ the temperature $\vartheta \in L^\infty(0,T; L^4(Q))
\cap {L}^2([0,T]; W^{1,2}(Q))$ $\pas$,\\
the velocity $\vu \in {L}^2(0,T; W^{1,2}_0(Q;R^3))$ $\pas$, \\
and the momentum $\vr \vu \in C_w(0,T; L^{5/4}(Q; R^3))$ $\pas$ satisfy
\[
\E\bigg[\sup_{t \in [0,T]} \| \vr(t,\cdot) \|^{5/3}_{L^{5/3}(Q)} \bigg] < \infty,
\]
\[
\E\bigg[ {\rm ess}\sup_{t \in [0,T]} \| \vt(t,\cdot) \|^4_{L^{4}(Q)} \bigg] < \infty,
\
\E\bigg[ \int_0^T \| \vartheta \|^2_{W^{1,2}(\tor)} \ \dt \bigg] < \infty,
\]
\[
\E\bigg[\int_0^T \| \vu \|^2_{W^{1,2}_0(Q;R^3)} \ \dt  \bigg] < \infty ,
\
\expe{ \sup_{t \in [0,T]} \| \vr \vu (t,\cdot) \|^{5/4}_{L^{5/4}(Q;R^3)} };
\]
\item the equation of continuity
\begin{align}\label{wWS237final}
\int_0^T \intQ{ \left[ \vr \partial_t \psi + \vr \vu \cdot \Grad \psi \right] }\dt = - \intQ{ \vr_0 \psi(0) };
\end{align}
holds for all $\psi\in \DC([0,T) \times R^3)$ $\prst$-a.s.;
\item the momentum equation
\begin{align}\label{wWS238final}
\begin{aligned}
&\int_0^T \partial_t \psi \intQ{ \vr \vu \cdot \bfvarphi } \dt\\
&+\int_0^T \psi \intQ{ \varrho\bfu\otimes\bfu : \nabla \bfvarphi } \dt -\int_0^T \psi \intQ{ \mathbb S(\vartheta,\nabla\bfu) : \nabla\bfvarphi } \dt\\
&+\int_0^T \psi \intQ{ p(\varrho,\vartheta) \diver\bfvarphi }\dt + \int_0^t \psi \intQ{ \varrho{\vc{F}}(\varrho, \vt , \bfu) \cdot
\bfvarphi }\,\dif W \\&= - \intQ{ \vr_0 \vc{u}_0 \cdot \bfvarphi };
\end{aligned}
\end{align}
holds for all $\psi \in \DC[0,T)$,
 $\bfvarphi\in \DC (Q;R^3)$ $\prst$-a.s.
\item the entropy balance
\begin{align} \label{m217*final}\begin{aligned}
-\intO{ \varrho_0 s(\varrho_0,\vartheta_0)\psi }
 &\geq \int_0^T \intQ{ \left[ s(\varrho,\vartheta) \partial_t \psi + \varrho s (\varrho,\vartheta)\bfu \cdot \nabla\psi \right]}\dif t
\\& +\int_0^T\int_{Q}
\frac{1}{\vartheta}\Big[\mathbb S(\vartheta,\nabla\bfu):\nabla\bfu+\frac{\kappa(\vartheta)}{\vartheta}|\nabla\vartheta|^2\Big]
\psi\,\dif x\,\dif t \\
&+ \int_0^T\intQ{ \frac{\kappa(\vartheta)\nabla\vartheta}{\vartheta} \cdot \nabla\psi }\,\dif t + \int_0^T \intQ{ \frac{\vr}{\vt} \mathcal{Q} \psi } \dt
\end{aligned}
\end{align}
holds for all
$\psi\in \DC([0,T)\times R^3)$, $\psi \geq 0$ $\prst$-a.s.;
\item the total energy balance
\begin{equation} \label{EI20final}
\begin{split}
- \int_0^T \partial_t \psi &
\left( \intQ{ \mathcal E(\varrho,\vartheta, \vu) } \right) \ \dt =
\psi(0) \intQ{ \mathcal E \left(\varrho_0,\vartheta_0, \vu_0 \right) } + \int_0^T \psi \intQ{ \vr H } \dt \\&+\int_0^T  \psi \int_{Q}\varrho{\bf F}(\varrho,\vartheta,\bfu)\cdot\bfu\,\dd W\dx + \frac{1}{2} \int_0^T
\psi \bigg(\intQ{ \sum_{k \geq 1}  \varrho| {\bf F}_k (\varrho,\vartheta,  {\bf u}) |^2 } \bigg) {\rm d}t
\end{split}
\end{equation}
holds for any $\psi \in \DC([0, T)$ $\mathbb{P}$-a.s.
Here, we abbreviated
$$\mathcal E(\varrho,\vartheta,\vu)= \frac{1}{2} \varrho | {\bf u} |^2 + \varrho e(\varrho,\vartheta).$$
\item The initial data $(\vr_0, \vt_0, \vc{u}_0)$ are $\mf_0$-measurable,
$\Lambda=\prst\circ \big(\varrho_0,\vt_0,\vc{u}_0 \big)^{-1}$.
\end{enumerate}
\end{definition}

\begin{Remark} \label{NRR1}

The random variables $\vr$, $\vr \vu$ are progressively $(\mf_t)$-measurable and weakly continuous, in particular, the real valued processes
\[
t \mapsto \intQ{ \vr \psi },\ t \mapsto \intQ{ \vr \vu \cdot \bfphi }
\ \mbox{are}\ (\mf_t)-\mbox{adapted for any smooth}\ \psi, \ \bfphi \ \pas
\]
The entropy $\vr s(\vr,\vt)$ is ``weakly c\` adl\` ag'', meaning
\[
t \mapsto \intQ{ \vr s(\vr, \vt) \psi } \ \mbox{is c\` adl\` ag for any smooth} \ \psi \ \pas,
\]
cf. the discussion in \cite[Chapter 1]{F}.
The random variables $(\vr, \vt, \vu)$ being progressively $(\mf_t)$-measurable, all stochastic integrals are well defined.

\end{Remark}

\begin{Remark} \label{NRRR1}
The energy equality \eqref{EI20final}, together with the entropy inequality \eqref{m217*final}, forms a natural counterpart of the energy inequality obtained in \cite{BFH} for the barotropic case. As in the deterministic case, cf. \cite{F}, equality holds in \eqref{EI20final}.
\end{Remark}

\begin{Remark} \label{NRrR1}

The specific values of the exponents appearing in the integrals in (d) are related to the constitutive hypotheses
(\ref{m98}--\ref{md1}).

\end{Remark}

Now we are ready to state the existence theorem which is the main result of this paper.

\begin{Theorem}\label{thm:main}
Let $Q \subset R^3$ be a bounded domain of class $C^{2 + \nu}$, $\nu > 0$.
Suppose that the structural assumptions \eqref{P-1}--\eqref{m108} are satisfied.

Then there exists a martingale solution to problem \eqref{eq:1}, \eqref{beq:1} in the sense of Definition \ref{def:sol} with the initial law $\Lambda$.

\end{Theorem}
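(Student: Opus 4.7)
The plan is to construct the martingale solution via the multi-layer approximation scheme outlined in the introduction, combining the deterministic Navier--Stokes--Fourier theory of \cite{F} with the stochastic compactness approach of \cite{BrHo}. The starting point is the internal-energy formulation \eqref{eq:1} regularized by adding artificial viscosity to the continuity equation ($\ep$-layer), an artificial pressure $\delta \vr^\beta$ with $\beta$ large enough to control high-density concentrations, and further stabilizing terms in the internal-energy equation that keep both $\vr$ and $\vt$ strictly positive.

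At the innermost level I would solve the regularized momentum equation by a Galerkin scheme on a finite-dimensional subspace (the $m$-layer), while keeping the continuity and internal-energy equations in their continuous form so as to exploit maximum principles for $\vr$ and $\vt$. To avoid the stopping-time machinery of \cite{BrHo}, several nonlinearities would be cut off at level $R$ (the $R$-layer), making the stochastic coefficients globally Lipschitz. The resulting coupled system would then be solved by a Cauchy-collocation-type time discretization: at each time step the new velocity is obtained from a finite-dimensional algebraic/elliptic equation, while $\vr$ and $\vt$ are updated by short-time parabolic solvers. Passing to the limit in the time step on a fixed probability space yields a strong solution to the $(m,R,\ep,\delta)$-approximate system; sending $R\to\infty$ via It\^{o}'s formula applied to the kinetic energy, together with the lower bound on $\vt$, then permits division by $\vt$ to recover the entropy formulation \eqref{eq:13'}--\eqref{eq:0202bis}.

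From here one climbs back up the tower of approximations $m\to\infty$, $\ep\to 0$, $\delta\to 0$. At each stage the scheme is the same: derive uniform estimates from the total energy balance \eqref{eq0202bisE} and the entropy balance \eqref{m217*final}; establish tightness of the laws of $(\vr,\vt,\vr\vu)$ and of the driving noise in suitable topologies (weak continuity spaces $C_w([0,T];L^q)$ for $\vr$ and $\vr\vu$, $L^p$-in-time Sobolev norms for $\vu$ and $\vt$); invoke the Jakubowski--Skorokhod representation theorem to produce almost-surely convergent copies on a new stochastic basis; and identify the nonlinear limits using the effective viscous flux identity together with the Lions--Feireisl renormalization for $\vr$, and convexity/lower semi-continuity for the entropy inequality. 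The limit of the stochastic integral is characterized via the limits of the quadratic and cross variations with the Wiener process, yielding a new cylindrical Wiener process on the new basis.

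The main obstacle, as emphasized in the introduction, is the genuine temperature dependence of the diffusion coefficient $\vc{F}=\vc{F}(\vr,\vt,\vu)$. In \cite{BrHo} the coefficient depended only on $(\vr,\vu)$, for which pathwise compactness was already available on the original probability space, so that the new basis could be introduced only at the end of each approximation level. Here, however, $\vt$ has no pathwise compactness properties in the original variables, which forces one to perform the Skorokhod--Jakubowski change of basis \emph{before} passing to the limit in the stochastic forcing term. Combined with the necessity of maintaining equality in the total energy balance \eqref{EI20final} through each passage and of preserving the sign in the entropy inequality, this demands a careful choice of path spaces and of the order in which tightness statements are established; this bookkeeping, together with the stochastic analogue of the effective viscous flux identity needed to pass to the limit in the pressure, is the most delicate part of the proof.
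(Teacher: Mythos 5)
Your plan reproduces the paper's architecture essentially step by step: the same multi-layer regularization ($R$, $m$, $\ep$, $\delta$), the Cauchy-collocation time step, passage to the entropy/total-energy formulation while $\vt$ is still strictly positive, uniform bounds from the total dissipation balance, Jakubowski--Skorokhod compactness, the effective viscous flux identity and renormalized continuity equation, and, crucially, your correct identification of the key novelty: the $\vt$-dependence of $\vc F$ forces the Skorokhod change of basis \emph{before} one can close the argument in the diffusion coefficients, in contrast with the barotropic case. Two small places where your sketch departs from the paper's actual route are worth flagging: the paper also introduces a non-local $\xi$-mollification of the noise coefficient (needed so that $\|\Pi_m[\vc F_{k,\ep,\xi}]\|_{L^\infty}$ can be controlled via \eqref{wW2a}), removed in a separate $\xi\to 0$ step between the Galerkin and vanishing-viscosity limits; and, already for $h\to 0$, a new probability space is produced by Skorokhod representation rather than solving on the original one. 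Finally, the stochastic integral is passed to the limit by the direct convergence criterion of Lemma \ref{RDL4} rather than by identifying quadratic and cross variations; both are standard, but the direct route is cleaner once one has already changed the stochastic basis.
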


The rest of the paper is devoted to the proof of Theorem \ref{thm:main}. Concluding remarks are presented in Section \ref{conc}.

\section{{Basic approximate problem}}
\label{sec:galerkin}

{The solutions will be constructed by means of a multilevel approximation scheme. In order to simplify presentation, we carry over the proof in the absence of
the heat source $H$. The necessary modifications to accommodate this quantity in the proof are left to the reader and can be found
in \cite[Chapter 3]{F}.}\\
The standard approach to non-linear partial differential equations
starts with a finite dimensional approximation of Galerkin type. Unfortunately, this can be applied only to the momentum equation (\ref{eq:11}) since we need
the density $\varrho$ and temperature $\vt$ to be positive at the first level of approximation. Positivity of both results from a maximum
principle, where the latter is usually incompatible with a Galerkin type approximation.
It seems therefore more convenient to apply the artificial viscosity method adding diffusive terms to both (\ref{eq:11}) and (\ref{eq:12}).
In order to prove Theorem \ref{thm:main} we will regularize several quantities in the system \eqref{eq:1}. Adopting the approximation scheme
developed in \cite{F} we add an artificial viscosity to the continuity and momentum equations and regularize the pressure. For technical reasons,
however, we have to regularize several further quantities as well.\\
Following \cite{F}, we introduce
\begin{equation} \label{neu}
\begin{split}
p_\delta(\vr,\vt)&=p (\vr,\vt) + \delta({\vr^2} + \vr^\beta), \\
e_{M, \delta}(\vr, \vt) &= e_M(\vr, \vt) + \delta \vt,\
e_\delta(\vr, \vt) = e(\vr, \vt) + \delta \vt, \\
s_{M,\delta}(\vartheta,\varrho)  &=s_M(\vartheta,\varrho)+\delta\log\vartheta,\quad s_\delta(\vr, \vt) = s(\vr, \vt) + \delta \log(\vt),\\ \kappa_\delta(\vartheta )&=\kappa(\vartheta )+\delta\Big(\vt^\beta+\frac{1}{\vartheta}\Big),\ \cal{K}_\delta(\vt) = \int_0^\vt \kappa_\delta(z) \ {\rm d}z.
\end{split}
\end{equation}
\\
Let $\Delta_0$ be the Laplace operator defined on the domain $Q$, with the homogeneous Dirichlet boundary conditions. Let $\{ w_n \}_{n \geq 1}$ be
the orthonormal system of the associated eigenfunctions.
A suitable platform for the Galerkin method is the following family of finite--dimensional spaces,
\[
H_m = {\rm span} \left\{ w_n \ \Big| \ n \leq m \right\}^3,\ m = 1,2,\dots
\]
endowed with the Hilbert structure of the Lebesgue space $L^2(Q;R^3)$.
Let
\[
\Pi_m : L^2(Q;R^3) \to H_m
\]
be the associated $L^2-$orthogonal projection.
As $\partial Q$ is regular, we have $\mathcal{D}(\Delta_0) = W^{2,2}(Q; R^3) \cap W^{1,2}_0 (Q; R^3) \hookrightarrow\hookrightarrow C (\overline Q; R^3)$.
Accordingly,
\begin{equation} \label{wW2a}
\begin{split}
\left\| P_m [\vc{f}] \right\|_{L^\infty(Q; R^3)} &\aleq \left\| P_m [\vc{f}] \right\|_{W^{2,2}(Q; R^3)}
\aleq \left\| P_m [\vc{f}] \right\|_{\mathcal{D}(\Delta_0)} \aleq
\left\| \vc{f} \right\|_{W^{2,2}(Q; R^3)},
\end{split}
\end{equation}
where the associated embedding constants are \emph{independent of} $m$.
\\
Moreover, as $H_m$ is finite-dimensional, all norms are equivalent on $H_m$ for any fixed $m$ - a property
that will be frequently used at the first level of approximation.\\
Next we introduce a cut-off function
\[
\chi \in C^\infty(R), \ \chi(z) = \left\{ \begin{array}{l} 1 \ \mbox{for} \ z \leq 0, \\ \chi'(z) \leq 0 \ \mbox{for}\ 0 < z < 1, \\
\chi(z) = 0 \ \mbox{for}\ z \geq 1, \end{array} \right.
\]
together with the operators
\[
[\vc{v}]_R = \chi(\| \vc{v} \|_{H_m} - R ) \vc{v}, \ \mbox{defined for}\  \vc{v} \in H_m.
\]
\\
Finally, we regularize the diffusion coefficients replacing $\vc{F}$ by $\vc{F}_\ep$,
\[
\vc{F}_{\ep} = \left( \vc{F}_{k,\ep} \right)_{k \geq 1},\
\vc{F}_{k,\ep}(x,\vr, \vt, \vu) = \chi \left( \frac{\ep}{\vr} - 1 \right) \chi \left(|\vu| - \frac{1}{\ep} \right)
\vc{F}_k(x, \vr, \vt, \vu).
\]
Consequently, in view of (\ref{P-1}),
\begin{equation} \label{P-1new}
\| {\vc{F}_{k, \ep}} \|_{L^\infty} + \| {\nabla_{x,\vr,s, \vc{u}}} \vc{F}_k \|_{L^\infty} \leq f_{k,\ep},\ \sum_{k=1}^\infty f_{k,\ep} ^2 < \infty.
\end{equation}
In addition, we regularize $\vc{F}_{k,\ep}(\cdot, \vr, \vt, \vu)$ in a ``non--local'' way, specifically, we introduce $\vc{F}_{\ep, \xi}$,
\begin{equation} \label{P-1New}
\vc{F}_{k,\ep,\xi} = \omega_{\xi} * \left[ h_\xi \vc{F}_{k,\ep} ( \cdot, \vr, \vt, \vu) \right], \ h_\xi \in \DC(Q),
\end{equation}
where $(\omega_\xi (x) )_{\xi > 0}$ is a family of regularizing kernels.
\\
The basic \emph{approximate problem} reads:
\begin{subequations}\label{wW}
\begin{align} \label{wW3}
\D \vr &+ \Div (\vr [\vu]_{R} )\ \dt  = \ep \Del \vr \ \dt ,\\
\D \Pi_m [\vr \vu]
&+ \Pi_m [\Div (\vr [\vu]_R \otimes \vu) ] \dt
+  \Pi_m \Big[ \chi(\| \vu \|_{H_m} - R )\Grad \left(  p (\vr,\vt) + \delta ({\vr^2} + \vr^\beta) \right) \Big] \dt\ \label{wW4}\\
&= \Pi_m \Big[\ep \Del (\vr \vu) +  \chi(\| \vu \|_{H_m} - R ) \Div \mathbb{S} (\Grad \vu) + {\frac{1}{m}} \vu \Big] \dt \nonumber \\ &+
\Pi_m \left[ \vr \Pi_m \Big[ \vc{F}_{\ep, \xi} (\vr, \vt, \vc{u} )  \Big]  \right] \ \D \W,\nonumber
\end{align}
\begin{align}
\label{m119}
\dd(\varrho e_\delta(\varrho, \vartheta)) &+
\big[\Div (\varrho e_\delta(\varrho, \vartheta) [\vc u]_R) - \Div ( \kappa_\delta (\vt) \Grad \vt ) \big]\dt \\
&\nonumber = \Big[\chi(\| \vu \|_{H_m} - R )\tens{S}(\vartheta,\Grad\vc u): \Grad \vc u -
p(\varrho, \vartheta) \Div {[\vc u]_R} \Big] \dt \\ &+ \Big[ \ep
\delta (\beta \varrho^{\beta - 2} + 2) |\Grad \varrho |^2  +
\delta \frac{1}{{\vartheta}^2} - \ep \vartheta^5 + {\ep \vr |\Grad \vu|^2} \Big]\dt, \nonumber
\end{align}
\end{subequations}
to be solved in the space-time cylinder $[0, T] \times Q$, with
the Neumann boundary conditions
\begin{equation} \label{BCba}
\Grad \vr \cdot \vc{n}|_{\partial Q} = 0, \ \Grad \vt \cdot \vc{n}|_{\partial Q} = 0,
\end{equation}
and prescribed initial data
\begin{equation} \label{wW5}
\begin{split}
\vr(0, \cdot) &= \vr_0 \in C^{2 + \nu}(\Ov{Q}), \ \vr_0 > 0,
\vartheta(0, \cdot) = \vartheta_{0},\ \vartheta_{0} \in W^{1,2}(Q) \cap
C(\Ov{Q}), \ \vartheta_{0} > 0, \\
\vu (0, \cdot) &= \vu_{0} \in H_m.
\end{split}
\end{equation}
\\
In \eqref{wW3} and (\ref{wW4}) we recognize the artificial ``viscosity'' terms $\ep \Del \vr$, $\ep \Del (\vr \vu)$, the pressure
regularization $\delta (\vr^\beta + \vr^2)$ as well as the cut-off operators applied to the velocity in the convective terms and other quantities
to preserve the total energy balance. Note that {equations (\ref{wW3}) and (\ref{m119}) are deterministic},
meaning they can be solved pathwise,
while (\ref{wW4}) involves stochastic integration. It is worth noting that (\ref{m119}) expresses the balance of the internal energy while the target problem
is formulated in terms of the entropy. In the following we give a precise definition of solutions to the approximate problem.

\begin{definition} \label{WD1}
Let $\Lambda$ be a Borel probability measure on $C^{2+\nu}(\Ov{Q})\times W^{1,2} \cap C(\Ov{Q})\times H_m$.
\\
Then
$$\big((\Omega,\mf,(\mf_t),\prst),\varrho,\vartheta,\bfu,W)$$
is called a \emph{martingale solution} to problem \eqref{wW3}--\eqref{m119}, \eqref{BCba}, with the initial data $\Lambda$,  provided the following holds.

\begin{enumerate}
\item $(\Omega,\mf,(\mf_t),\prst)$ is a stochastic basis with a complete right-continuous filtration;
\item $W$ is an $(\mf_t)$-cylindrical Wiener process;
\item
the functions $\vr$, $\vt$, $\vu$ belong to the class:
\[
\begin{split}
&\vr \in C([0,T]; C^{2 + \nu}(\Ov{Q})),\ \vr > 0 \ \pas, \\
&{\vt \in C([0,T]; W^{1,2}),\ 0 < \underline{\vt} < \vt < \Ov{\vt}, \ \partial_t \vt, \ K_\delta(\vt) \in L^2((0,T) \times Q) \ \pas,} \\
&\vu \in C([0,T], H_m) \ \pas,
\end{split}
\]
$\vr(t, \cdot)$, $\vt(t,\cdot)$, $\vu(t, \cdot)$ are $(\mathfrak{F}_t)$-adapted for any $t \geq 0$;
\item
the approximate equation of continuity
\begin{equation} \label{rho0}
\partial_t \vr + \Div (\vr [\vu]_R) = \ep \Del \vr,\ \Grad \vr \cdot \vc{n}|_{\partial Q} = 0 \ \mbox{holds in}\ (0,T) \times Q,  \ \pas;
\end{equation}
\item the approximate equation of internal energy
\begin{align} \label{rhos0}
\begin{aligned}
\partial_t(\varrho e_\delta(\varrho, \vartheta)) &+
\Div (\varrho e_\delta(\varrho, \vartheta) [\vc u]_R) - \Delta {\cal
K}_\delta (\vartheta)\\
&=\chi(\| \vu \|_{H_m} - R )\tens{S}(\vartheta,\Grad\vc u): \Grad \vc u -
{p(\varrho, \vartheta) \Div [\vc u]_R } \\&+ \ep
\delta (\beta \varrho^{\beta - 2} + 2) |\Grad \varrho |^2  +
\delta \frac{1}{{\vartheta}^2} - \ep \vartheta^5 + {\ep \vr |\Grad \vu|^2}, \ \Grad \vt \cdot \vc{n}|_{\partial Q} =0,
\end{aligned}
\end{align}
holds a.a. in $(0,T) \times Q$ $\pas$;
\item the approximate momentum equation
\begin{align}
\nonumber
\big\langle\varrho\bfu(t),\bfvarphi\big\rangle&=\big\langle(\varrho \bfu)_0,\bfvarphi\big\rangle+\int_0^t\big\langle\varrho[\bfu]_R\otimes\bfu,\nabla\bfvarphi\big\rangle\,\dif s-\int_0^t\big\langle \chi(\| \vu \|_{H_m} - R ) \mathbb S(\vartheta,\nabla\bfu),\nabla\bfvarphi\big\rangle\,\dif s\\
\label{rhou0}&-\varepsilon\int_0^t\big\langle\varrho\bfu,\Delta\bfvarphi\big\rangle\,\dif s+\int_0^t\chi( \| \vu \|_{H^m} - R)\big\langle  p_\delta(\vartheta,\varrho),\diver\bfvarphi\big\rangle\,\dif s\\
&- {\frac{1}{m}\int_0^t\big\langle\bfu,\bfvarphi\big\rangle\,\dif s} +\int_0^t\big\langle\varrho\Pi_m[\vc{ F}_{\ep, \xi} (\varrho, \vartheta,\bfu)]\,\dif W,\bfvarphi\big\rangle,
\nonumber
\end{align}
holds $\prst$-a.s. for all
 $\bfvarphi\in H_m$ and all $t\in[0,T]$;
\item we have
\begin{equation} \label{wWS13}
(\vr \vu)_0 = \vr(0) \vu(0)\,\,\pas,\
\prst [\vr(0, \cdot),\vt(0,\cdot), \vu(0, \cdot)]^{-1} = \Lambda[\vr_0,\vt_0, \vu_0].
\end{equation}

\end{enumerate}

\end{definition}

Our main goal in this section is to prove the existence of martingale solutions to the approximate problem \eqref{wW3}--\eqref{m119}, \eqref{BCba}.
\begin{theorem} \label{wWP1}
Let $\beta > 6$.
Let $\Lambda$ be a Borel probability measure  on $C^{2 + \nu}(\Ov{Q})\times W^{1,2} \cap C(\Ov{Q}) \times H_m$ such that
\begin{align*}
&\Lambda \left\{  0 < \underline{\vr} \leq \vr_0 ,\  \| \vr_0 \|_{C^{2 + \nu}_x} \leq \Ov{\vr},\
\Grad \vr \cdot \vc{n}|_{\partial Q} = 0 \right\} = 1,\
{\Lambda \left\{  0 < \underline{\vt} \leq \vt_0 \leq \overline{\vt} ,\  \| \vt_0 \|_{W^{1,2}_x} \leq \Ov{\vt} \right\}} = 1,\\
&\qquad\int_{C^{2 + \nu}\times W^{1,2}\cap C \times H_m } \| \vu_0 \|^r_{H_m}  {\rm d} \Lambda \leq \Ov{u}
\end{align*}
for some positive deterministic constants $\underline{\vr}$, $\Ov{\vr}$, $\underline{\vt}$, $\Ov{\vt}$ and some $r > 2$.
\\
Then
the approximate problem \eqref{wW3}--\eqref{m119}, \eqref{BCba} admits a martingale solution in the sense of
Definition \ref{WD1}. The solution satisfies
\begin{equation} \label{wWS110a}
\sup_{t \in [0,T]} \left( \| \vr(t) \|_{C^{2 + \nu}_x} + \| \partial_t \vr(t) \|_{C^{\nu}_x} +
\| \vr^{-1}(t) \|_{C^0_x} \right) \leq\, c\quad \prst\text{-a.s.},
\end{equation}
\begin{align*} { \sup_{t \in [0,T]} \| \vartheta^{-1}
\|_{L^\infty_x} + \sup_{t \in [0,T]} \| \vartheta
\|_{W^{1,2}_x \cap L^\infty_x} } + \int_0^T \Big( \|
\partial_t \vartheta \|^2_{L^2_x} + \| \Delta_x {\cal
K}_{\delta}(\vartheta) \|^2_{L^2_x} \Big) \dt\leq c \quad \prst\text{-a.s.}
\end{align*}
\begin{equation}
\label{wWS113a}
\begin{split}
\expe{ \sup_{t \in [0,T] } \| \vu (t, \cdot) \|_{H_m}^r } \leq\,c  \Big(1+\expe{ \| \vu_0 \|_{H^m}^r }\Big),
\end{split}
\end{equation}
where $c=\left(m,R, T, \underline{\vr}, \Ov{\vr}, \underline{\vt}, \Ov{\vt}, \Ov{u} \right)$.

\end{theorem}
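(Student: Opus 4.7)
I would construct martingale solutions to \eqref{wW} by a time-discretization in the spirit of the modified Cauchy collocation scheme mentioned in the Introduction. Partition $[0,T]$ into $N$ intervals of length $\tau = T/N$. Given $(\vr^{n-1},\vt^{n-1},\vu^{n-1})$ at time $t_{n-1}$, the three equations \eqref{wW3}, \eqref{m119}, \eqref{wW4} decouple once the convective velocity is frozen to $[\vu^{n-1}]_R$: by the definition of $[\,\cdot\,]_R$ together with the $m$-dependent norm equivalence on $H_m$ and \eqref{wW2a}, this frozen field is bounded in $L^\infty(Q;\R^3)$. Consequently, \eqref{wW3} becomes a linear parabolic equation for $\vr^n$ to which classical Schauder theory applies, producing a strictly positive classical solution in $C^{1+\nu/2,\,2+\nu}$ with minimum principle bounds. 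Inserting $\vr^n$ and $\vu^{n-1}$ into \eqref{m119}, the resulting semilinear parabolic equation in $\vt$ alone admits a strong solution with uniform two-sided bounds: the singular term $\delta/\vt^2$ repels the solution from zero, while the coercive term $-\ep\vt^5$ yields an $L^\infty$ bound from above. Finally, with $\vr^n,\vt^n$ in hand, \eqref{wW4} becomes a finite-dimensional SDE on $H_m$ whose coefficients are Lipschitz thanks to the mollified noise coefficient $\vc{F}_{\ep,\xi}$ and the cut-off $\chi(\|\vu\|_{H_m}-R)$; classical SDE theory yields pathwise existence and uniqueness on $[t_{n-1},t_n]$.

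\textbf{Uniform estimates and compactness.} The above produces a piecewise-defined approximation $(\vr_\tau,\vt_\tau,\vu_\tau)$. Uniform-in-$\tau$ bounds come from three sources: (i) the maximum principle and Schauder theory for \eqref{rho0}, giving uniform positivity and $C^{2+\nu}$-control of $\vr_\tau$; (ii) two-sided bounds on $\vt_\tau$ together with $\vt_\tau \in L^\infty_t(W^{1,2}_x)$ and $\Delta\mathcal{K}_\delta(\vt_\tau), \partial_t\vt_\tau \in L^2_{t,x}$, obtained by testing \eqref{rhos0} with appropriate powers of $\vt_\tau$ and $\mathcal{K}_\delta(\vt_\tau)$; (iii) an It\^o-based energy estimate: applying the It\^o formula to $\tfrac12\|\vu_\tau\|^2_{H_m}$ and invoking the Burkholder--Davis--Gundy inequality together with \eqref{P-1new} yields the moment bound \eqref{wWS113a}. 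Tightness on the product of Polish spaces $C_t(C^{2+\nu}(\Ov{Q}))\times C_t(W^{1,2}(Q)\cap C(\Ov{Q}))\times C_t(H_m)\times C_t(\mathfrak{U}_0)$ then follows from these bounds combined with standard Arzel\`a--Ascoli and Aubin--Lions arguments.

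\textbf{Passage to the limit and main obstacle.} The Jakubowski--Skorokhod representation theorem supplies a new stochastic basis and new random variables converging almost surely; passage to the limit $\tau\to 0$ in all pathwise terms of \eqref{rho0}, \eqref{rhos0} is then immediate, while the stochastic integral in \eqref{rhou0} is recovered by the classical martingale identification argument. The most delicate point is preserving the two-sided bounds on $\vt_\tau$ \emph{uniformly} in $\tau$: without the regularizations $\delta/\vt^2$ and $-\ep\vt^5$, the discrete scheme could degenerate, destroying the control of the coefficients in all three equations and, in particular, invalidating the Lipschitz structure of the finite-dimensional SDE. A related subtlety, peculiar to the complete thermodynamic setting, is that $\vc{F}_{\ep,\xi}$ depends on $\vt$ in addition to $(\vr,\vu)$, which forces the Skorokhod representation to be applied jointly on the triple $(\vr,\vt,\vu)$ before identifying the coefficients in the noise term -- precisely the novelty flagged in the Introduction.
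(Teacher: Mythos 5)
Your proposal matches the paper's strategy in essentially every structural respect: a time--discretization in which the convective velocity is frozen to its value at the left grid point, decoupling \eqref{wW3}, \eqref{m119}, \eqref{wW4}; classical parabolic theory and the maximum principle for $\vr$; the observation that the regularizations $\delta/\vt^2$ and $-\ep\vt^5$ force two-sided bounds for $\vt$; an It\^o/BDG moment bound for $\vu$; tightness and Jakubowski--Skorokhod to pass to the limit; and the identification that the Skorokhod step must precede the identification of the noise coefficient $\vc{F}$ because it depends on $\vt$. This is precisely the paper's route.

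The only point of genuine divergence, and it is minor, is the velocity step. You view \eqref{wW4} on each interval $[t_{n-1},t_n]$ as a finite-dimensional SDE in $\vu$ whose coefficients are globally Lipschitz with linear growth (thanks to the cut-off $[\,\cdot\,]_R$, the mollified $\vc{F}_{\ep,\xi}$, and \eqref{wW2a}), and invoke classical Picard-type existence/uniqueness. The paper's scheme goes one step further and also freezes every occurrence of $\vu$ (including in the cut-off multiplier, the viscosity, the $\frac1m\vu$ term, and the noise coefficient) at $t=nh$, so that after the change of variable via $\mathcal M^{-1}[\vr]$ the relation \eqref{wWS17} is an \emph{explicit} formula for $\vu(t)$ requiring no fixed point or SDE theory at all; this is what the paper means by ``can be solved immediately.'' Both choices work; the explicit scheme is a bit lighter and eliminates the need to verify Lipschitz continuity of the cut-off nonlinearities, but the version you describe requires only standard finite-dimensional SDE theory and does not change the limiting analysis. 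A smaller technical remark: tightness should be claimed in H\"older pathspaces $C^\iota([0,T];\cdot)$ with $\iota$ below the H\"older exponents coming from the Kolmogorov and parabolic estimates, rather than in $C([0,T];\cdot)$ directly, so that compact embeddings can be used; the Arzel\`a--Ascoli appeal you mention amounts to the same thing once phrased this way.
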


In order to prove Theorem \ref{wWP1} we adopt the following strategy: The Galerkin projection applied in (\ref{wW4})
reduces the problem to a variant of ordinary stochastic differential equation, where the other unknown
quantities $\vr$, $\vt$ are uniquely determined by the deterministic equations (\ref{wW3}) and (\ref{m119}) in terms of
$\vu$ and the data. Accordingly,  problem \eqref{wW}, \eqref{BCba}) can be solved by means of a simple \emph{iteration scheme}. This is the objective of Section \ref{wWS1S1}. In addition, the approximate solutions satisfy the associated energy balance
equation yielding the uniform bounds necessary to carry out the asymptotic limits $m \to \infty$, $R \to 0$, and $\xi \to 0$.

\subsection{Iteration scheme}
\label{wWS1S1}

To begin, we fix the initial data $(\vr_0, \vt_0, \vu_0)$ satisfying \eqref{wWS13}. The existence of such data along with a suitable probability space follows from the standard
Skorokhod representation theorem.
Solutions to problem \eqref{wW}, \eqref{BCba} will be constructed by means of a modification of the Cauchy collocation method.
Fixing a time step $h > 0$ we set
\begin{equation} \label{wWS14}
\vr(t, \cdot) = \vr_0 , \ \vt (t, \cdot) = \vt_0,\ \vu(t,\cdot)=\vu_0, \ \mbox{for} \ t \leq 0,
\end{equation}
and define recursively, for  $t \in [nh, (n+1)h)$
\begin{align} \label{wWS15}
\partial_t \vr &+ \Div (\vr [\vu(nh, \cdot)]_R ) = \ep \Del \vr, \ \Grad \vr \cdot \vc{n}|_{\partial Q} = 0,\  \vr(nh, \cdot) = \vr(nh-, \cdot),\\
\nonumber
\partial_t(\varrho e_\delta(\varrho, \vartheta)) &+
\Div (\varrho e_\delta(\varrho, \vartheta)  [\vu(nh, \cdot)]_R) - \Delta {\cal
K}_\delta (\vartheta)\\
&= \chi(\| \vu(nh, \cdot)  \|_{H_m} - R )\tens{S}(\vartheta,\Grad\vc u(nh, \cdot)): \Grad \vc u(nh, \cdot) -
p(\varrho, \vartheta) \Div [\vc u(nh, \cdot)]_R\label{wWS15b}\\& + \ep
\delta (\beta \varrho^{\beta - 2} + 2) |\Grad \varrho |^2  +
\delta \frac{1}{{\vartheta}^2} - \ep \vartheta^5 + {\ep \vr |\Grad \vu(nh, \cdot)|^2} , \\ &\ \ \ \ \Grad \vt \cdot \vc{n}|_{\partial Q} = 0,\ \vt(nh, \cdot) = \vt(nh-, \cdot).\nonumber
\end{align}
Note that system (\ref{wWS15}), \eqref{wWS15b} is uncoupled as the former equation is independent of $\vt$.
Finally, for $\varrho$ and $\vartheta$ given through (\ref{wWS15}), \eqref{wWS15b}, we solve
\begin{align} 
\D \Pi_m (\vr \vu)
&+ \Pi_m \left[ \Div \Big(\vr [\vu(nh, \cdot) ]_R \otimes \vu(nh, \cdot) \Big) \right] \dt \nonumber\\
\label{wWS16}&+  \Pi_m \Big[ \chi(\| \vu(nh, \cdot)  \|_{H_m} - R )\Grad \left( p (\vr,\vt) + \delta ({\vr^2} + \vr^\beta) \right) \Big] \dt
\\
&= \Pi_m \Big[ \ep \Del (\vr \vu(nh, \cdot) ) +  \chi(\| \vu(nh, \cdot)  \|_{H_m} - R ) \Div \mathbb{S} (\Grad \vu(nh, \cdot) )
+ { \frac{1}{m} \vu (nh, \cdot) } \Big] \dt\nonumber\\
&+
\Pi_m \left[ \vr \Pi_m [\vc{F}_{\ep, \xi} (nh, \cdot) \right] \D \W, \ t \in [nh, (n+1)h),\ \vu(nh, \cdot) = \vu(nh-).\nonumber
\end{align}
To proceed it is convenient to rewrite (\ref{wWS16}) in terms of $\D \vu$. To this end, we write
\[
\D \Pi_m (\vr \vu) = \Pi_m (\D \vr \vu) + \Pi_m (\vr \D \vu) = \Pi_m (\partial_t \vr \vu) \dt + \Pi_m (\vr \D \vu).
\]
Next, we introduce
a linear mapping $\mathcal{M}[\vr]$,
\[
\mathcal{M}[\vr] : H_m \to H_m, \
\mathcal{M}[\vr](\vc{v}) = \Pi_m (\vr \vc{v}),
\]
or, equivalently,
\[
\intQ{ \mathcal{M}[\vr] \vc{v} \cdot \bfphi } \equiv \intQ{ \vr \vc{v} \cdot \bfphi } \ \mbox{for all}\
\bfphi \in H_m.
\]
The operator $\mathcal M$ has been introduced in \cite[Section 2.2]{feireisl1}, where one can also find the following properties.
We have that $\mathcal{M}[\rho]$ is invertible we have
\begin{align}\label{eqM-1}
&\Big\|\mathcal{M}^{-1}\big[\rho\big]\Big\|_{\mathcal L(X_n^\ast,X_n)}\leq\Big(\inf_{x\in\mt}\rho\Big)^{-1}
\end{align}
as long as $\rho$ is bounded below away from zero, and, clearly,
\[
\mathcal{M}^{-1}[\rho] (\Pi_m [\rho \vc{v}]) = \vc{v} \ \mbox{for any}\ \vc{v} \in H_m.
\]
 Let us finally mention that
\begin{align}\label{eqM-1b}
\Big\|\mathcal{M}^{-1}\big[\rho\big]-\mathcal{M}^{-1}\big[\rho\big]\Big\|_{\mathcal L(X_n^\ast,X_n)}\leq\,c(m,\underline\rho)\|\rho^1-\rho^2\|_{L^1_x},
\end{align}
provided both $\rho^1$ and $\rho^2$ are bounded from below by some positive constant $\underline\rho$.
Accordingly, relation (\ref{wWS16}) can be written in the form

\begin{align}\label{wWS17}
\begin{aligned}
\vu(t) - \vu(nh-)
&+ \mathcal{M}^{-1}[\vr(t)]\int_{nh}^t  \Pi_m \left[ \Div \Big(\vr [\vu(nh, \cdot) ]_R \otimes \vu(nh, \cdot) \Big) \right] \dt \\
&+  \mathcal{M}^{-1}[\vr(t)]\int_{nh}^t \Pi_m \Big[ \chi(\| \vu(nh, \cdot)  \|_{H_m} - R )\Grad \left( p(\vr,\vt) + \delta (\vr + \vr^\beta) \right) \Big] \dt
\\
&= \mathcal{M}^{-1}[\vr(t)]\int_{nh}^t  \Pi_m \Big[  \chi(\| \vu(nh, \cdot)  \|_{H_m} - R )\Div \mathbb{S}(\vt,\Grad \vu(nh, \cdot)   \Big] \dt\\
&+  \mathcal{M}^{-1}[\vr(t)]\int_{nh}^t \Pi_m \Big[ \ep \Del (\vr \vu(nh, \cdot) ) +   \frac{1}{m} \vu(nh, \cdot)  \Big] \dt \\
&+
\mathcal{M}^{-1}[\vr(t)]\int_{nh}^t \Pi_m \left[\vc{F}_{\ep, \xi} (nh, \cdot) \right]\dif W,\ nh < t < (n+1)h. \end{aligned}
\end{align}

The iteration scheme (\ref{wWS15})--(\ref{wWS16}) provides a unique solution for any initial data (\ref{wWS14}). Indeed,
as $\vu(nh) \in H_m$ is a smooth function, equation (\ref{wWS15}) admits a unique solution for any initial data $\vr(nh)$.
Moreover, as a direct consequence of the
parabolic maximum principle, $\vr$ remains positive as long as the initial datum $\vr(nh)$ is positive. We may therefore infer that
(\ref{wWS14}--\ref{wWS16}) give rise to uniquely determined functions $\vr$,$\vartheta$,$\vu$. In fact, we find a solution $\varrho$ such that
\[
\begin{split}
\vr \in C([0,T]; C^{2 + \nu} (\Ov{Q})), \ &\vr > 0,\ \pas
\end{split}
\]
by applying standard results (see, for instance, \cite[Theorem 5.1.21]{lu}) pathwise.
For equation \eqref{wWS15b} we obtain a solution $\vartheta$ belonging $\p$-a.s. to the class
\bFormula{m125b} Y = \left\{
\begin{array}{c} \partial_t \vt \in L^2((0,T) \times Q),\
\Delta_x{\cal K}_\delta (\vt) \in L^2((0,T) \times
Q), \\ \\
\vt \in L^\infty(0,T; W^{1,2}(Q) \cap L^\infty
(Q)),\quad \frac 1\vartheta\in L^\infty((0,T)\times Q),
\end{array}
\right\} \eF
by applying \cite[Lemma 3.4]{F} pathwise. Finally, knowing $\varrho$ and $\vartheta$ we can find the velocity
$$\bfu\in C([0,T];X_m)\  \pas$$
solving \eqref{wWS16} recursively.
Note that our construction implies that $\vr$, $\vt$ and $\vu$ are {$(\mathfrak{F}_t)$-adapted} and continuous in the
time variable $\prst$-a.s.

\subsection{The limit for vanishing time step}
\label{wWS1S2}

Our next goal is to let $h \to 0$ in (\ref{wWS14}--\ref{wWS16}) to obtain a solution of the approximate problem \eqref{wW}, \eqref{BCba}. This step
leans essentially on suitable uniform bounds independent of $h$.
To simplify notation, we shall write
\[
[v ]_h = v(nh, \cdot),\
[v ]_{h,R}(t, \cdot) = [v (nh, \cdot) ]_R \ \mbox{for}\ t \in [nh, (n+1)h), \ n=0,1,\dots.
\]
As all norms are equivalent on the finite-dimensional space $H_m$ and $\partial Q$ is smooth, we get
\[
\| [\vu ]_{h,R}  \|_{C^l(\Ov{Q}; R^3)} \leq c(l,m,R)  \ \mbox{uniformly for} \ h > 0, \mbox{and}\ t \in [0,T]
\ \mbox{at least for}\ l \leq 2.
\]
Consequently, the approximate equation of continuity (\ref{wWS15}) admits a unique regular solution, the smoothness of which is determined by the
initial data. In particular, the solution $\vr$ belongs to the class
\begin{equation} \label{wWS18}
\vr \in C([0,T]; C^{2 + \nu}(\Ov{Q})), \ \partial_t \vr \in L^\infty([0,T]; C^\nu(\Ov{Q}))
\end{equation}
as soon as
$\vr_0 \in C^{2 + \nu}(\Ov{Q})$, $\Grad \vr_0 \cdot \vc{n}|_{\partial Q} = 0$ for some $\nu > 0$.
In addition, the standard parabolic maximum principle yields
\begin{equation} \label{wWS19}
0 < \underline{r} (T,m, R) \min_{\Ov{Q}} \vr_0 \leq \vr(t, \cdot) \leq \Ov{r}(T,m,R) \max_{\Ov{Q}} \vr_0  \ \mbox{for all}\ t \in [0,T].
\end{equation}
Note that the regularized velocity $[\vu]_{h,R}$ is only piece-wise continuous; whence the same is true for $\partial_t \vr$ and therefore we do not expect
$\partial_t \vr \in C([0,T]; C^\nu(\Ov{Q}))$. Also
note carefully that, thanks to the hypotheses imposed on the initial law $\Lambda$, $\vr$ is bounded in the aforementioned spaces only in terms of the initial datum $\vr_0$, meaning, no probabilistic averaging
has been applied. In particular, we may infer that $\pas$
\begin{equation} \label{wWS110}
{\rm ess}\sup_{t \in [0,T]} \left( \| \vr(t, \cdot) \|_{C^{2 + \nu}(\Ov{Q})} + \| \partial_t \vr(t, \cdot) \|_{C^{\nu}(\Ov{Q})} +
\| \vr^{-1}(t, \cdot) \|_{C(\Ov{Q})} \right) \aleq c
\end{equation}
with $c=c\left(m,R, T, \underline{\vr}, \Ov{\vr} \right)$,
whenever
\begin{equation} \label{wWS111}
0 < \underline{\vr} \leq \vr_0, \ \| \vr_0 \|_{C^{2+\nu}(\Ov{Q})} \leq \Ov{\vr} \ \pas
\end{equation}
for certain \emph{deterministic} constants $\underline{\vr}$, $\Ov{\vr}$.\\
As far as the internal energy equation (\ref{wWS15b}) {we recall that}
\begin{equation} \label{wWS111b}
0 < \underline{\vt} \leq \vt_0 \leq \overline\vt, \ \| \vt_0 \|_{W^{1,2} (Q)} \leq \Ov{\vt} \ \pas,
\end{equation}
where $\underline{\vt},\overline{\vt}$ are deterministic constants.
From \cite[Lemma 3.3, Corollary 3.2]{F} we obtain the following estimate for the solutions to \eqref{wWS15b}
\[ \sup_{t \in (0,T)} \| \vartheta
\|^2_{W^{1,2}(Q)} + \int_0^T \Big( \|
\partial_t \vartheta \|^2_{L^2(Q)} + \| \Delta_x {\cal
K}_{\delta}(\vartheta) \|^2_{L^2(Q)} \Big) \dt \]
\[
\leq C \Big( {\| \varrho \|_{W^{1,\infty}((0,T) \times Q)}}, \| [\vc u]_{h,R}
\|_{C([0,T]; X_n)}, (\inf_{(0,T) \times Q} \varrho)^{-1}, \|
\vartheta_{0} \|_{W^{1,2}(Q)} \Big),
\]
and
\[
{0 < \underline{\theta} \leq \vt \leq \Ov{\theta},}
\]
where $\underline{\theta}$, $\Ov{\theta}$ depend only on $\underline{\vt}$, $\Ov{\vt}$, $\| \varrho \|_{W^{1,\infty}((0,T) \times Q)}, \| [\vc u]_{h,R}
\|_{C([0,T]; X_n)}$.
\\
Thus we may infer that
\bFormula{m135}
\begin{split}
{\rm ess} \sup_{t \in (0,T)} \| \vartheta^{-1}
\|_{L^\infty(Q)}    &+
{\rm ess} \sup_{t \in (0,T)} \| \vartheta
\|^2_{W^{1,2} \cap L^\infty (Q)}\\ + \int_0^T \Big( \|
\partial_t \vartheta \|^2_{L^2(Q)} &+ \| \Delta_x {\cal
K}_{\delta}(\vartheta) \|^2_{L^2(Q)} \Big) \dt\aleq c\left(m, R, \underline{\vr}, \Ov{\vr}, \underline{\vt}, \Ov{\vt} \right)
\end{split}
\eF
uniformly in $h$, where, similarly to (\ref{wWS110}), the bound is deterministic.
This immediately implies the bounds
\begin{equation} \label{m135A}
\left\| \vt \right\|_{C^{\nu(\alpha)}([0,T]; W^{\alpha, 2}(Q))} \leq c\left(m, R, \underline{\vr}, \Ov{\vr}, \underline{\vt}, \Ov{\vt} \right),\
\nu(\alpha) > 0 \ \mbox{for any}\ 0 \leq \alpha < 1,
\end{equation}
see e.g. Amann \cite{Amann1}.
\\
Now, we are ready to estimate the velocity.
In order to do so, we will systematically use the fact that all norms are equivalent on the finite dimensional space $H_m$.
It follows from (\ref{wWS16}) and the equivalence of norms on $H_m$ that
\[
\begin{split}
\intQ{\vr \vu (\tau, \cdot) \cdot \bfphi } & \aleq  \| \vu_0 \|_{H_m}
+ \int_{0}^\tau \sup_{0 \leq s \leq t} \| \vu \|_{H_m} \dt + T \\
&+
\left\| \int_{0}^\tau \intQ{ \vr \Pi_m [\vc{F}_{\ep, \xi} ]_h  }  \ \D \W \right\|_{H_m} \mbox{for any}\ \bfphi \in H_m,\ \| \bfphi \|_{H_m} \leq 1,
\end{split}
\]
whenever $0 \leq \tau \leq T$.
Consequently, taking the supremum over $\bfphi$, we obtain
\[
\| \Pi_m[\vr \vu] (\tau, \cdot) \|_{H_m}  \aleq  \| \vu_0 \|_{H_m}
+ \int_{0}^\tau \sup_{0 \leq s \leq t} \| \vu \|_{H_m} \dt + T +
\left\| \int_{0}^\tau \intQ{ \vr \Pi_m [\vc{F}_{\ep, \xi} ]_h  }  \ \D \W \right\|_{H^m},
\]
or,
\[
\begin{split}
&\| \Pi_m[\vr \vu] (\tau, \cdot) \|_{H_m}^r \\ &\aleq  c(T,k) \left[ \| \vu_0 \|_{H_m}^r
+ \int_{0}^\tau \sup_{0 \leq s  \leq t} \| \vu \|^r_{H_m} \dt + T +
\left\| \int_{0}^\tau \intQ{ \vr \Pi_m [\vc{F}_{\ep,\xi} ]_h  }  \ \D \W \right\|_{H^m}^r \right]
\end{split}
\]
for $0 \leq \tau \leq T$, and
for any $r \geq 1$.
Next, we pass to expectations and apply Burkholder-
Davis-Gundy inequality to control the last integral obtaining
\begin{align}
&\expe{ \sup_{0 \leq t \leq \tau} \| \Pi_m[\vr \vu] (t, \cdot) \|_{H_m}^r }  \aleq  \expe{ \| \vu_0 \|_{H_m}^r }
+ \int_{0}^\tau \expe{ \sup_{0 \leq s \leq t} \| \vu \|_{H_m}^r } \dt + T \nonumber\\ &+
\expe{ \int_0^\tau \sum_{k = 1}^\infty \left| \intQ{ \vr \Pi_m [ [\vc{F}_{k,\ep, \xi} ]_h  ]  } \right|^2  \dt }^{r/2}\nonumber\\
&\aleq  \expe{ \| \vu_0 \|_{H_m}^r }
+ \int_{0}^\tau \expe{ \sup_{0 \leq s \leq t} \| \vu \|_{H_m}^r } \dt + T +
\expe{ \int_0^\tau \sum_{k = 1}^\infty \|  \Pi_m [ \vc{F}_{k,\ep, \xi}  ] \|^2_{L^{\infty}(Q)}  \dt }^{r/2} \label{wWS112}
\end{align}
where we have used the uniform bounds for the density  obtained in
(\ref{wWS110}).
Finally, we use (\ref{wW2a}) to bound the last integral,
\[
\|  \Pi_m [ \vc{F}_{k,\ep, \xi}  ] \|^2_{L^{\infty}_x} \aleq \left\| \vc{F}_{k,\ep, \xi} \right\|_{W^{2,2}_x}
\aleq c( \xi )  \left\| \vc{F}_{k,\ep, \xi} \right\|_{L^\infty_x} \aleq f_{k,\ep} c(\xi).
\]
\\
Seeing that
\[
\vu = \mathcal{M}^{-1}[\vr][\Pi_m [\vr \vu]]
\]
we may use again the bounds (\ref{wWS110}), (\ref{wWS111}) to conclude that
\[
\| \Pi_m [\vr \vu] \|_{H_m} \aleq \| \vu \|_{H_m} \aleq \| \Pi_m [\vr \vu] \|_{H_m}
\]
where the constants in $\aleq$ depend only on $\underline{\vr}$, $\Ov{\vr}$.
Consequently, a direct application of Gronwall's lemma gives rise to the estimate
\begin{equation} \label{wWS113}
\expe{ \sup_{\tau \in [0,T] } \| \Pi_m [\vr \vu](\tau, \cdot) \|_{H_m}^r } +
\expe{ \sup_{\tau \in [0,T] } \| \vu (\tau, \cdot) \|_{H_m}^r } \aleq c(r, T) \expe{1+ \| \vu_0 \|_{H^m}^r }, \ r \geq 1.
\end{equation}
\\
In addition to the uniform bound (\ref{wWS113}) we will need compactness of the approximate velocities in the space $C([0, T]; H_m)$.
Moreover, we have to control the difference
\[
(\vu - [\vu]_h)
\]
uniformly in time. To this end, estimates on the modulus of continuity of $\vu$ are needed.
Evoking (\ref{wWS16}) we obtain
\[
\begin{split}
&\intQ{ \left[\vr \vu (\tau_1, \cdot) - \vr \vu(\tau_2, \cdot) \right]  \cdot \bfphi }\\ &=
\int_{\tau_1}^{\tau_2} \intQ{ \Big(\vr [\vu ]_{h,R} \otimes [\vu]_h \Big): \Grad \bfphi } \dt  + \int_{\tau_1}^{\tau_2} \intQ{ \chi(\| [\vu]_h  \|_{H_m} - R )\, p_\delta (\vr,\vt) \Div \bfphi } \dt
\\
&+ \int_{\tau_1}^{\tau_2} \intQ{  \ep \vr [\vu]_h \cdot \Del \bfphi } \dt- \int_{\tau_1}^{\tau_2} \intQ{  \frac{1}{m}  [\vu]_h \cdot \bfphi } \dt \\&  - \int_{\tau_1}^{\tau_2} \intQ{  \chi(\| [\vu]_h \|_{H_m} - R ) \mathbb{S}(\vt,\Grad [\vu]_h ): \Grad \bfphi } \dt \\
&+
\int_{\tau_1}^{\tau_2} \intQ{ \vr \Pi_m [\vc{F}_{\ep, \xi}]_h \cdot \bfphi }  \ \D \W \ \mbox{for any}\ \bfphi \in H_m,\
0 \leq \tau_1 < \tau_2.
\end{split}
\]
With the bound (\ref{wWS113}) at hand, we may repeat the arguments leading to (\ref{wWS112}) to obtain
\[
\expe{ \left\| \Pi_m \left[ \vr \vu (\tau_1) - \vr \vu (\tau_2) \right] \right\|_{H_m}^r } \leq c(r,T) \left| \tau_1 - \tau_2 \right|^{r/2} \expe{
\left( \| \vu_0 \|_{H^m}^{r} + 1 \right) },\ r \geq 1
\]
whenever $0 \leq \tau_1 < \tau_2 \leq T$, $|\tau_1 - \tau_2| \leq 1$. Thus we may apply Kolmogorov continuity criterion to conclude that  $\Pi_m[\vr\vu]$  has $\prst$-a.s. $\beta$-H\"older continuous trajectories for all $\beta\in (0,\frac{1}{2}-\frac{1}{r})$.
\[
\expe{ \left\| \Pi_m [\vr \vu] \right\|^r_{C^{\beta }([0,T]; H_m) } }
\leq c(r,T) \expe{ \| \vu_0 \|_{H_m}^{r} + 1 } ,\ r > 2.
\]
Recalling the relation
\[
\vu = \mathcal{M}^{-1}[\vr]\Pi_m [\vr \vu],
\]
boundedness of $\vr$ from \eqref{wWS110} and \eqref{eqM-1} we may infer that
\begin{equation} \label{wWS114}
\expe{ \left\|  \vu \right\|_{C^{\beta }([0,T]; H_m) } }
\aleq  \expe{ \| \vu_0 \|_{H_m}^{r} + 1 }
\end{equation}
uniformly in $h$ whenever $r > 2$ and $\beta\in (0,\frac{1}{2}-\frac{1}{r})$ with a constant independent of $h$.
\\
With the estimates (\ref{wWS110}), (\ref{m135}), and (\ref{wWS114}) at hand, we are ready to perform the limit $h \to 0$ in the approximate scheme (\ref{wWS14}--\ref{wWS16}).
Consider the joint law of the basic state variables $(\vr,\vt, \vu, \W)$ ranging in the \emph{pathspace}
\begin{align*}
\mathfrak{X} &\equiv C^\iota([0,T]; C^{2 + \iota}(Q))\times \left[ C^\iota([0,T];L^2(Q)) \cap
L^2(0,T; W^{1,2}(Q)) \right]\\&
\times  C^\iota([0,T]; H_m)  \times C([0,T]; \mathfrak{U}_0),
\end{align*}
$\iota\in (0, \Ov{\nu})$, where $\Ov{\nu} > 0$ is the minimum of the H\" older exponents in (\ref{wWS110}), \eqref{m135A}, (\ref{wWS114}), and (\ref{wWS115}).
Let $[\vr_h,\vt_h, \vu_h, \W]$ be the (unique) approximate solution issuing from the iteration scheme (\ref{wWS14}--\ref{wWS16}), with the initial data being $\mathfrak F_0$ measureable and satisfying \eqref{wWS111}, \eqref{wWS111b}
as well as
\begin{equation} \label{wWS115}
\begin{split}
&\expe{ \| \vu_0 \|^r_{H_m} } \leq \Ov{u} \ \mbox{for some}\ r > 2.
\end{split}
\end{equation}
Let $\mathcal{L}[\vr_h,\vt_h,\vu_h,W]$ denote the joint law of $[\vr_h,\vt_h,\vu_h,W]$ on $\mathfrak{X}$, whereas $\mathcal{L}[\vr_h]$, $\mathcal L[\vt_h]$, $\mathcal{L}[\vu_h]$ and $\mathcal{L}[W]$ denote the corresponding marginals on
, respectively.
\\
In view of the bounds (\ref{wWS18}), (\ref{wWS19}), \eqref{m135}, \eqref{m135A} and (\ref{wWS113}), (\ref{wWS114}), we conclude that $\mathcal{L}[\vr_h,\vt_h,\vu_h,W]$ is \emph{tight} on the \emph{Polish} space $\mathcal{X}$. We may therefore apply Skorokhod's representation
theorem to obtain the following.
\begin{Proposition}\label{prop:skorokhod}
There exists a complete probability space $(\tilde\Omega,\tilde\mf,\tilde\prst)$ with $\mathfrak{X}$-valued Borel measurable random variables $(\tilde\varrho_h,
\tilde \vt_h, \tilde\bu_h,\tilde W_h)$, $h\in(0,1)$, and $(\varrho, \vt, \bu,\tilde W)$ such that (up to a subsequence)
\begin{enumerate}
 \item the law of $(\tilde\varrho_h, \tilde \vt_h, \tilde\bu_h,\tilde W_h)$ on $\mathfrak{X}$ is given by $\mathcal{L}[\vr_h,\vt_h, \vu_h,W]$, $h\in(0,1)$,
\item the law of $(\varrho, \vt, \bu, \tilde W)$ on $\mathfrak{X}$ is a Radon measure,
 \item $(\tilde\varrho_h, \tilde \vt_h, \tilde\bu_h,\tilde W_h)$ converges $\,\tilde{\prst}$-almost surely to $(\tilde\varrho, \tilde\vt, {\tilde\bu},\tilde{W})$ in the topology of $\mathfrak{X}$, i.e.
\begin{equation} \label{wWS116}
\begin{split}
\tilde \vr_h &\to \tilde\vr \ \mbox{in}\ C^\iota([0,T]; C^{2 + \iota}(\Ov{Q})) \ \tilde{\mathbb P}\mbox{-a.s.}, \\
\tilde \vt_h &\to \tilde\vt \ \mbox{in}\ C^\iota([0,T];W^{1,2}(Q)) \cap L^2(0,T; W^{1,2}(Q)) \ \tilde{\mathbb P}\mbox{-a.s.}, \\
\tilde \vu_h &\to \tilde\vu \ \mbox{in}\ C^\iota([0,T]; H_m ) \ \tilde{\mathbb P}\mbox{-a.s.}, \\
\tilde \W_h &\to  \tilde\W \ \mbox{in}\ C([0,T]; \mathfrak{U}_0 )\ \tilde{\mathbb P}\mbox{-a.s.}
\end{split}
\end{equation}
\end{enumerate}
\end{Proposition}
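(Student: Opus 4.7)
The strategy is the classical three-step application of the Skorokhod representation theorem: first verify that the pathspace $\mathfrak{X}$ is Polish, then establish tightness of the joint laws $\{\mathcal{L}[\vr_h,\vt_h,\vu_h,W]\}_{h>0}$ on $\mathfrak{X}$, and finally invoke Skorokhod to pass to a new probability space on which the convergence becomes almost sure along a subsequence. Since Polishness of $\mathfrak{X}$ follows at once from the fact that it is a finite product of separable complete metric spaces (each factor being either a Banach space of H\"older-continuous functions valued in a Polish space, or the intersection of two such), the essential task is tightness.

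To prove tightness, I would treat each marginal separately and use the fact that tightness of the marginals implies tightness of the joint law on a product of Polish spaces. For $\vr_h$, the deterministic pathwise bound \eqref{wWS110} provides uniform control in $C([0,T]; C^{2+\nu}(\Ov{Q})) \cap C^{0,1}([0,T]; C^\nu(\Ov{Q}))$, and Arzel\`a--Ascoli gives the compact embedding into $C^\iota([0,T]; C^{2+\iota}(\Ov{Q}))$ for any $\iota < \nu$; because the bound holds $\tilde{\prst}$-a.s. with a deterministic constant, the laws are in fact concentrated on a fixed compact set. For $\vt_h$, the deterministic bound \eqref{m135} together with \eqref{m135A} places $\vt_h$ in $L^\infty(0,T; W^{1,2}\cap L^\infty) \cap C^{\nu(\alpha)}([0,T]; W^{\alpha,2}(Q))$ with $\partial_t \vt_h \in L^2$; the Aubin--Lions lemma and interpolation yield the required compactness in $C^\iota([0,T]; L^2(Q)) \cap L^2(0,T; W^{1,2}(Q))$. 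For $\vu_h$, the moment bound \eqref{wWS114} in $C^\beta([0,T]; H_m)$ combined with the fact that $H_m$ is finite-dimensional gives, through Markov's inequality, tightness in $C^\iota([0,T]; H_m)$ for $\iota < \beta$. Finally the law of $W$ is a fixed Wiener measure on $C([0,T]; \mathfrak{U}_0)$ and is therefore trivially tight.

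With tightness in hand, the classical Skorokhod representation theorem (applicable because $\mathfrak{X}$ is Polish) delivers a probability space $(\tilde\Omega, \tilde{\mf}, \tilde{\prst})$ and $\mathfrak{X}$-valued Borel random variables $(\tilde\vr_h, \tilde\vt_h, \tilde\vu_h, \tilde W_h)$ with laws matching $\mathcal{L}[\vr_h,\vt_h,\vu_h,W]$ along a subsequence, and a limit $(\tilde\vr, \tilde\vt, \tilde\vu, \tilde W)$ such that the convergences in \eqref{wWS116} hold $\tilde{\prst}$-a.s. The Radon property of the limit law is automatic on the Polish space $\mathfrak{X}$.

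The only delicate point I anticipate is extracting the H\"older-in-time control of $\vt_h$ uniformly in $h$, since the estimates in \eqref{m135}, \eqref{m135A} are proved pathwise for the regularized internal energy equation whose right-hand side depends on the piecewise-constant $[\vu]_h$; one must verify that the constants there depend only on $m$, $R$, $\underline{\vr}$, $\Ov{\vr}$, $\underline{\vt}$, $\Ov{\vt}$ (which they do by the formulation of \cite[Lemma 3.3, Corollary 3.2]{F}) so that the resulting compact sets are independent of $h$. Once this is confirmed, the remaining argument is routine and the proposition follows.
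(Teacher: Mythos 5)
Your proof is correct and takes the same approach as the paper: the paper invokes the bounds (\ref{wWS18}), (\ref{wWS19}), \eqref{m135}, \eqref{m135A}, (\ref{wWS113}), (\ref{wWS114}) to conclude tightness on $\mathfrak{X}$ and then applies Skorokhod's representation theorem, exactly as you do. You merely fill in the tightness verification that the paper states without elaboration, and your observations---that the deterministic bounds on $\vr_h,\vt_h$ concentrate those laws on fixed compact sets while $\vu_h$ requires the Markov inequality, and that the constants in \eqref{m135}, \eqref{m135A} are uniform in $h$---are the correct ones to make.
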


Since the trajectories of $\tilde\vr$, $\tilde\vt$ and $\tilde\vu$ are $\tilde\p$-a.s. continuous, progressive measurability with respect to their canonical filtrations
follows from adaptivity of the approximate sequence.
Consequently, they are progressively measurable with respect to the canonical filtration generated by $[\tilde\vr,\tilde\vt,\tilde\vu,\tilde W]$, namely,
$$\tilde\mf_t:=\sigma\big(\sigma_t[\tilde\vr]\cup\sigma_t[\tilde\vt]\cup\sigma_t[\tilde\vu]\cup\cup_{k=1}^\infty\sigma_t[\tilde  W_k]\big),\quad t\in[0,T].$$
Moreover, it is standard to show that $\tilde W$ is a cylindrical Wiener process with respect to $(\tilde\mf_t)_{t\geq 0}$.
Now, we show that $[ \tilde\vr, \tilde\vu]$ solves the approximate continuity equation.

\begin{Lemma}\label{lem:cont1}
The process $[ \tilde\varrho, {\tilde\bu}]$ satisfies \eqref{rho0} in $(0,T)\times Q$, $\tilde\p$-a.s.
\end{Lemma}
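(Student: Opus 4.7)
The approach is to transfer the pathwise PDE \eqref{wWS15} from the original probability space, where it holds by construction of the iteration scheme, to the new probability space via equality of joint laws, and then to pass to the limit $h \to 0$ using the strong convergences from Proposition \ref{prop:skorokhod}. No stochastic integration enters this argument, since both \eqref{wWS15} and the target equation \eqref{rho0} are pathwise identities.

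First, I would fix a smooth test function $\varphi \in C^\infty([0,T] \times \Ov{Q})$ with $\Grad \varphi \cdot \vc{n}|_{\partial Q} = 0$ and introduce the real-valued functional
\[
\Phi_\varphi(\vr, \vc{v}) = \sup_{t \in [0,T]} \left| \intQ{\vr(t)\varphi(t)} - \intQ{\vr(0)\varphi(0)} - \int_0^t \intQ{\left[ \vr \partial_s\varphi + \vr [\vc v]_{h,R} \cdot \Grad\varphi + \ep \vr \Del\varphi \right]}\D s \right|
\]
on $C([0,T]; C^{2+\iota}(\Ov Q)) \times C([0,T]; H_m)$. For each \emph{fixed} $h>0$ the piecewise-constant-in-time cut-off $\vc{v}\mapsto [\vc{v}]_{h,R}$ is continuous from $C([0,T]; H_m)$ into $L^\infty(0,T; H_m)$, so $\Phi_\varphi$ is continuous and hence Borel measurable. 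Since $(\vr_h, \vu_h)$ is constructed precisely to satisfy \eqref{wWS15} pathwise, $\Phi_\varphi(\vr_h, \vu_h) = 0$ holds $\prst$-almost surely, and Proposition \ref{prop:skorokhod}(1) (equality of joint laws on $\mathfrak{X}$) yields $\Phi_\varphi(\tilde\vr_h, \tilde\vu_h) = 0$ $\,\tilde\prst$-almost surely.

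Next, I would pass to the limit $h \to 0$. By Proposition \ref{prop:skorokhod}(3), $\tilde\vr_h \to \tilde\vr$ in $C([0,T]; C^{2+\iota}(\Ov Q))$ and $\tilde\vu_h \to \tilde\vu$ in $C([0,T]; H_m)$ $\tilde\prst$-a.s., so the only genuinely new ingredient is
\[
[\tilde\vu_h]_{h,R} \longrightarrow [\tilde\vu]_R \quad\mbox{in}\quad L^\infty(0,T; H_m),\qquad \tilde\prst\mbox{-a.s.}
\]
Splitting $[\tilde\vu_h]_{h,R}(t) - [\tilde\vu]_R(t) = \bigl([\tilde\vu_h(nh)]_R - [\tilde\vu(nh)]_R\bigr) + \bigl([\tilde\vu(nh)]_R - [\tilde\vu(t)]_R\bigr)$ for $t \in [nh,(n+1)h)$, the first bracket is controlled by the uniform convergence of $\tilde\vu_h$ together with the local Lipschitz property of $\vc v \mapsto \chi(\|\vc v\|_{H_m} - R)\vc v$ on $H_m$, while the second is bounded by $h^\iota \|\tilde\vu\|_{C^\iota([0,T]; H_m)}$ times the same Lipschitz constant, which tends to zero by the Hölder regularity of $\tilde\vu$ produced in (\ref{wWS114}). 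Combined with equivalence of all norms on the finite-dimensional space $H_m$, this lets one pass to the limit in every term of $\Phi_\varphi(\tilde\vr_h, \tilde\vu_h) = 0$, obtaining
\[
\intQ{\tilde\vr(t)\varphi(t)} - \intQ{\tilde\vr(0)\varphi(0)} = \int_0^t \intQ{\left[ \tilde\vr \partial_s\varphi + \tilde\vr [\tilde\vu]_R \cdot \Grad\varphi + \ep \tilde\vr \Del\varphi \right]}\D s
\]
for all admissible $\varphi$ and all $t \in [0,T]$, $\tilde\prst$-a.s.

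Finally, since $\tilde\vr \in C^\iota([0,T]; C^{2+\iota}(\Ov Q))$ and $[\tilde\vu]_R \in C([0,T]; H_m)$ $\,\tilde\prst$-a.s., all spatial derivatives in \eqref{rho0} are classical, the integration by parts in the weak formulation can be reversed, and the Neumann boundary condition is inherited from the uniform $C^{2+\iota}$ bound; differentiating the identity in $t$ then yields \eqref{rho0} pointwise on $(0,T) \times Q$. I expect the main technical nuisance to be precisely the operator $[\cdot]_h$, which is \emph{not} continuous from $C([0,T]; H_m)$ into itself: one must therefore keep $h$ fixed when invoking equality of laws and only afterwards exploit the uniform Hölder-in-time regularity of $\tilde\vu$ to control the jumps of the piecewise-constant interpolant.
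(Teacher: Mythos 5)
Your proposal is correct and follows essentially the same route as the paper: transfer the (deterministic, pathwise) approximate continuity equation \eqref{wWS15} to the new probability space via equality of joint laws, pass to the limit $h\to 0$ using the a.s.\ uniform convergences from Proposition \ref{prop:skorokhod} together with the H\"older-in-time regularity of $\tilde\vu$ to control the piecewise-constant interpolant $[\cdot]_{h,R}$, and finally upgrade to the classical equation by parabolic regularity. The only cosmetic difference is that you make the transfer explicit via a Borel-measurable defect functional $\Phi_\varphi$ rather than citing Theorem \ref{RDT3} (which, for a purely deterministic identity, is indeed more machinery than one needs), and you spell out the technical point about $[\tilde\vu_h]_{h,R}\to[\tilde\vu]_R$ that the paper elides.
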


\begin{proof}
As a consequence of the equality of laws from Proposition \ref{prop:skorokhod}
and Theorem \ref{RDT3}, we see that the approximate continuity equation \eqref{wWS15} is satisfied on the new probability space by $[\tilde\vr_h,\tilde\vu_h]$.
Moreover, the uniform bounds \eqref{wWS110}, \eqref{wWS114} hold true also for $[\tilde\vr_h,\tilde\vu_h]$. Hence by Proposition \ref{prop:skorokhod} and Vitali's convergence theorem we may pass to the limit in \eqref{wWS15} and deduce that $[\tilde\vr,\tilde\vu]$ is a weak solution to the approximate continuity equation \eqref{wW3}. Furthermore,  the bounds \eqref{wWS110}, \eqref{wWS114} are also valid for the limit process $[\tilde\vr,\tilde\vu]$. Consequently, \eqref{wW3} is satisfied a.e. in $(0,T)\times\mt$, $\tilde\p$-a.s.
Finally, using parabolic regularity theory, we conclude that \eqref{rho0} is satisfies in the classical sense.
\end{proof}

As the next step, we are now going to show that the quantity $[\tilde\vr, \tilde\vt, \tilde\vu , \tilde \W ]$ solves the approximate momentum equation.

\begin{Lemma}\label{prop:m1}
The process $[\tilde\varrho, \tilde\vt, {\tilde\bu},\tilde W]$ satisfies \eqref{rhou0} for all $\bfphi\in H_m$ and $t\in[0,T]$, $\tilde\p$-a.s.
\end{Lemma}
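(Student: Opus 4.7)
The plan is to identify the limit of the approximate momentum equation on the new probability space. The starting point is the equality of laws from Proposition 3.5: combined with a standard abstract result about martingales (e.g.\ the Bensoussan--Jakubowski type argument used at the analogous step in \cite{BrHo} or in \cite{BFHbook}), it transfers the iteration scheme to the new probability space, so that $[\tilde\vr_h,\tilde\vt_h,\tilde\vu_h,\tilde W_h]$ satisfies, for every $\bfphi\in H_m$ and all $t\in[0,T]$, the equation \eqref{wWS16} with $\vr,\vt,\vu,W$ replaced by their tildes. Moreover the $h$-uniform bounds \eqref{wWS110}, \eqref{m135}, \eqref{wWS114} carry over (laws are preserved).

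Next I would pass to the limit $h\to 0$ in the deterministic terms one by one. The Hölder-in-time control of $\tilde\vu_h$ from \eqref{wWS114}, together with $\tilde\vu_h\to\tilde\vu$ in $C^\iota([0,T];H_m)$, yields $[\tilde\vu_h]_h\to\tilde\vu$ in $C([0,T];H_m)$ $\tilde{\mathbb P}$-a.s.\ (the piecewise-constant interpolation differs from $\tilde\vu_h$ by $O(h^\iota)$ in $H_m$), and hence also $[\tilde\vu_h]_{h,R}\to[\tilde\vu]_R$ in the same space since the cut-off $\chi(\|\cdot\|_{H_m}-R)$ is continuous. Combined with $\tilde\vr_h\to\tilde\vr$ in $C^\iota([0,T];C^{2+\iota}(\Ov Q))$ and $\tilde\vt_h\to\tilde\vt$ in $L^2(0,T;W^{1,2}(Q))$, together with the pointwise bounds from below on $\tilde\vr_h,\tilde\vt_h$ and the continuity of $p_\delta$ and $\mathbb S(\vt,\cdot)$, this gives almost sure convergence of each deterministic integrand on the right-hand side of \eqref{wWS16}. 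The uniform moment bound \eqref{wWS113} (for some $r>2$) upgrades a.s.\ convergence to convergence in $L^1(\tilde\Omega)$ via Vitali's theorem.

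The delicate part is the stochastic integral. Here I would invoke the standard identification procedure (see e.g.\ \cite[Lemma 2.6.6]{BFHbook} or \cite[Section 5]{BrHo}): the integrand
\[
\Phi_h(s):=\vr(s)\,\Pi_m\bigl[\vc F_{\ep,\xi}\bigl([\tilde\vr_h]_h,[\tilde\vt_h]_h,[\tilde\vu_h]_h\bigr)\bigr]
\]
is $(\tilde{\mf}_t^h)$-progressively measurable (on the Skorokhod space), bounded in $L^2(\tilde\Omega;L^2(0,T;L_2(\mathfrak U,H_m)))$ uniformly in $h$ thanks to \eqref{P-1new} and the smoothing by $\omega_\xi$, and converges $\tilde{\mathbb P}$-a.s.\ to $\tilde\vr\,\Pi_m[\vc F_{\ep,\xi}(\tilde\vr,\tilde\vt,\tilde\vu)]$ in $L^2(0,T;L_2(\mathfrak U,H_m))$. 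Combined with $\tilde W_h\to\tilde W$ in $C([0,T];\mathfrak U_0)$ and the joint-convergence criterion of Debussche--Glatt-Holtz--Temam type, this yields
\[
\int_0^t\Phi_h\,\dif\tilde W_h\ \longrightarrow\ \int_0^t\tilde\vr\,\Pi_m[\vc F_{\ep,\xi}(\tilde\vr,\tilde\vt,\tilde\vu)]\,\dif\tilde W
\]
in probability (in fact in $L^2(\tilde\Omega;H_m)$).

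Passing to the limit in every term produces the equation \eqref{rhou0} for $[\tilde\vr,\tilde\vt,\tilde\vu,\tilde W]$, valid for each fixed $t$ outside a $\tilde{\mathbb P}$-null set depending on $t$; by continuity of both sides in $t$ (the left-hand side is continuous in $H_m$ by \eqref{wWS114} for the limit, and the stochastic integral has a.s.\ continuous trajectories), the equality holds simultaneously for all $t\in[0,T]$ on a single full-measure set. The main obstacle is precisely the stochastic integral step: one must be careful that the integrand is progressively measurable with respect to a filtration for which $\tilde W_h$ is still a cylindrical Wiener process, which is why it was essential in Proposition 3.5 to transfer both $\tilde W_h$ and the state variables \emph{jointly} to the new probability space.
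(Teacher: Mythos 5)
Your proposal follows essentially the same route as the paper: transfer of the discretized momentum equation to the new probability space via equality of laws (the paper invokes Theorem \ref{RDT3} for exactly this purpose), control of $[\tilde\vu_h]_h - \tilde\vu_h$ (and the analogous density and temperature increments) by the $C^\iota$-in-time bounds, and identification of the limit in the stochastic integral via the uniform $L_2(\mathfrak U; L^2_x)$-bound on the integrand together with Lemma \ref{RDL4}. The only minor quibbles are cosmetic: the ``Vitali upgrade to $L^1(\tilde\Omega)$'' step in the deterministic part is superfluous since \eqref{rhou0} is an a.s.\ statement and the pathwise convergences suffice, and the integrand you write should carry the outer $\Pi_m$ and the discretized density $[\tilde\vr_h]_h$ as in \eqref{wWS16}; but the substance and order of the argument match the paper's.
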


\begin{proof}
Modifying slightly the proof, the result of Theorem \ref{RDT3} remains valid in the current setting. Hence as a consequence of the equality of laws from Proposition~\ref{prop:skorokhod}, the approximate momentum equation \eqref{wWS16} is satisfied on the new probability space by $[\tilde\vr_h,
\tilde \vt_h, \tilde\vu_h,\tilde W_h]$.
It is enough to pass to the limit with respect to $h$.\\
We observe that
\[
\left\| [\tilde{\vc{u}}_h]_h(t) - \tilde{\vc{u}}_h (t)\right\|_{H_m} \aleq h^\iota \| \tilde{\vc{u}}_h \|_{C^\iota_t H_m}
\]
and similarly
\begin{align*}
\left\| [\tilde{\vr}_h]_h (t)- \tilde{\vr}_h (t)\right\|_{C^{2+\iota}_x} \aleq h^\iota \| \tilde{\vr}_h \|_{C^\iota_t C^{2+\iota}_x},\\
\left\| [\tilde{\vt}_h]_h (t)- \tilde{\vt}_h (t)\right\|_{L^2_x} \aleq h^\iota \| \tilde{\vt}_h \|_{C^\iota_t L^2_x}.
\end{align*}
\\
Now, with the convergences \eqref{wWS116}, the bounds  \eqref{wWS110}, \eqref{wWS114} and the assumption \eqref{P-1} at hand
we may  pass to the limit in the approximate momentum equation \eqref{wWS16}. The only term which needs an explanation is the stochastic integral. By the uniform convergence of $\tilde\vr_h$ and $\tilde\bfu_h$ (recall Proposition \ref{prop:skorokhod}), the continuity of the coefficients $\mathbf{F}_{k,\ep, \xi}$ and the continuity of $\Pi_m$,
it is easy to see that  $\tilde\p$-a.s.
\begin{align}\label{eq:1707a}
\begin{aligned}
\Pi_m &\left[ [\tilde\vr_h]_h \Pi_m [\mathbf{F}_{k,\ep, \xi}([\tilde\vr_h]_h,[\tilde\vt_h]),[\tilde\vu_h]_h)] \right] \\&\to \Pi_m \left[ \vr \Pi_m [\mathbf{F}_{k,\ep,\xi}(\tilde\vr, \tilde\vt,  \tilde\vu ) \right]  \ \mbox{in}\ L^q((0,T)\times\Q)
\end{aligned}
\end{align}
for all $k\in\mathbb N$ and all $q<\infty$. On the other hand, we have
\begin{align*}
\tilde\E &\int_0^T \big\|\Pi_m \left[ [\tilde\vr_h]_h \Pi_m [\vc{F}_{\ep, \xi} ([\tilde\vr_h]_h,[\tilde\vt_h],[\tilde\vu_h]_h)] \right] \big\|_{L_2(\mathfrak U;L^2_x)}^2\dt\\
&\leq \sum_{k=1}^\infty\tilde\E \int_0^T \left\| \left[ [\tilde\vr_h]_h \Pi_m [\mathbf{F}_{k,\ep, \xi}([\tilde\vr_h]_h,[\tilde\vt_h] ,[\tilde\vu_h]_h)] \right] \right|^2_{L^2_x} \dt\\
&\leq \|\tilde\vr_h\|_{L^\infty_{\omega,t,x}}^2\sum_{k=1}^\infty\tilde\E \int_0^T \left\| \Pi_m [\mathbf{F}_{k,\ep,\xi}([\tilde\vr_h]_h, \tilde\vt_h]_h,[\tilde\vu_h]_h)] \right\|^2_{L^2_x} \dt\\
&\leq \|\tilde\vr_h\|_{L^\infty_{\omega,t,x}}^2\sum_{k=1}^\infty\tilde\E \int_0^T \left\| \mathbf{F}_{k,\ep,\xi}([\tilde\vr_h]_h,[\tilde\vt_h]_h,[\tilde\vu_h]_h) \right\|^2
_{L^2_x} \dt \aleq \|\tilde\vr_h\|_{L^\infty_{\omega,t,x}}^2\sum_{k=1}^\infty f_{k,\ep}^2\aleq c
\end{align*}
using \eqref{wW2a}, \eqref{P-1new} as well as \eqref{wWS110}.
Consequently, we can strengthen
\eqref{eq:1707a} to
 \begin{align}\label{eq:1707b}
\begin{aligned}
\Pi_m &\left[ [\tilde\vr_h]_h \Pi_m [\vc{F}_{\ep, \xi} ([\tilde\vr_h]_h,[\tilde\vt_h],[\tilde\vu_h]_h)] \right]\\ &\to \Pi_m \left[ \vr \Pi_m [\vc{F}_{\ep, \xi} (\tilde\vr, \tilde\vt, \tilde\vu)] \right]  \ \mbox{in}\ L^2(0,T;L_2(\mathfrak U;L^2(\mt))
\end{aligned}
\end{align}
$\tilde\p$-a.s.
Combining this with the convergence of $\tilde W_h$ from Proposition \ref{prop:skorokhod} we may apply Lemma Lemma \ref{RDL4} to pass to the limit in the stochastic integral and hence complete the proof.

\end{proof}

Next, we show:

\begin{Lemma}\label{prop:s1b}
The process $[\tilde\varrho, \tilde\vt, {\tilde\bu}]$ satisfies \eqref{rhos0} a.a. in $(0,T) \times Q)$ $\tilde{\mathbb{P}}$-a.s.
\end{Lemma}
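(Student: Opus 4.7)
The plan is to proceed in direct analogy with Lemma \ref{lem:cont1}: since equation \eqref{wWS15b} is deterministic (pathwise), the equality of laws supplied by Proposition \ref{prop:skorokhod} guarantees that the Skorokhod copies $[\tilde\vr_h,\tilde\vt_h,\tilde\vu_h]$ satisfy the iteration-scheme internal energy equation $\tilde\p$-a.s.; moreover the uniform bounds \eqref{wWS110}, \eqref{m135}, \eqref{m135A} are deterministic and so survive on the new probability space. The only task is therefore to pass to the limit $h\to 0$ in the weak formulation of \eqref{wWS15b}.

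First I would replace the piecewise-constant reconstructions $[\tilde\vu_h]_h$, $[\tilde\vr_h]_h$ and $[\tilde\vt_h]_h$ by $\tilde\vu_h$, $\tilde\vr_h$, $\tilde\vt_h$, up to errors of order $h^\iota$ in the relevant norms, using the uniform H\"older-in-time estimates \eqref{wWS110}, \eqref{m135A}, \eqref{wWS114} already exploited in the proof of Lemma \ref{prop:m1}. Then, exploiting the almost-sure convergences \eqref{wWS116} of Proposition \ref{prop:skorokhod} combined with the uniform two-sided bound $0 < \underline\vt \leq \tilde\vt_h \leq \Ov\vt$ from \eqref{m135} and continuity of $e_\delta$, $p$, $\mu$, $\lambda$, $\kappa_\delta$, Vitali's convergence theorem delivers strong $L^2((0,T)\times Q)$ convergence of the convective flux $\vr e_\delta(\vr,\vt)[\vu]_R$, of the pressure work $p(\vr,\vt)\Div[\vu]_R$, of the dissipative term $\mathbb{S}(\vt,\nabla\vu):\nabla\vu$ (where strong convergence of $\nabla\tilde\vu_h$ is automatic since $H_m$ is finite-dimensional), as well as of the stabilizing terms $\ep\delta(\beta\vr^{\beta-2}+2)|\nabla\vr|^2$, $\delta/\vt^2$, $\ep\vt^5$ and $\ep\vr|\nabla\vu|^2$.

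The only non-routine term is the heat-diffusion $\Delta \mathcal K_\delta(\tilde\vt_h)$. Its $L^2((0,T)\times Q)$ norm is uniformly bounded by \eqref{m135}, so up to extraction it converges weakly; strong $L^2(0,T;W^{1,2})$ convergence of $\tilde\vt_h$ together with the uniform two-sided bound on $\tilde\vt_h$ and continuity of $\kappa_\delta$ identifies the weak limit as $\Delta \mathcal K_\delta(\tilde\vt)$.

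The main obstacle I anticipate is to upgrade the limit identity from the distributional sense to the pointwise a.e. sense required by \eqref{rhos0}. This is resolved by noting that the deterministic bound \eqref{m135}, being preserved by weak lower semicontinuity on the Skorokhod space, forces the limit $\tilde\vt$ to lie in the regularity class \eqref{m125b}; consequently every individual term in \eqref{rhos0} belongs to $L^2((0,T)\times Q)$ $\tilde\p$-a.s., and the distributional equality becomes an a.e. identity.
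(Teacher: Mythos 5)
Your proposal is correct and follows essentially the same route as the paper: invoke the equality of laws plus Theorem \ref{RDT3} to carry the deterministic internal energy balance \eqref{wWS15b} onto the new probability space, observe that the deterministic bounds \eqref{wWS110}, \eqref{m135}, \eqref{m135A} survive, pass to the limit $h\to 0$ using the a.s.\ convergences \eqref{wWS116}, and conclude the a.e.\ identity from membership of $\tilde\vt$ in the class \eqref{m125b}. The paper's proof is terse to the point of being telegraphic; your write-up supplies precisely the details it leaves implicit (replacing $[\cdot]_h$-reconstructions via the H\"older moduli, Vitali, identification of the weak limit of $\Delta\mathcal K_\delta(\tilde\vt_h)$), all of which are correct.
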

\begin{proof}
As a consequence of the equality of laws from Proposition \ref{prop:skorokhod}
and Theorem \ref{RDT3}, we see that the approximate internal energy balance \eqref{wWS15b} is satisfied on the new probability space by $[\tilde\vr_h,\tilde\vt_h,\tilde\vu_h]$.
Moreover, the uniform bounds \eqref{m135} hold true also for $[\tilde\vr_h]$. Using Proposition \ref{prop:skorokhod} and  we may pass to the limit in \eqref{wWS15b} and deduce that $[\tilde\vr,\tilde\vt,\tilde\vu]$ is a weak solution to the approximate internal energy equation \eqref{rhos0}.
Furthermore, the limit process $\tilde\vt$ also belongs to the class \eqref{m125b}. Consequently, \eqref{rhos0} is satisfied a.e. in $(0,T)\times Q$, $\tilde\p$-a.s.
\end{proof}

Finally, as $\tilde{\vt}_h$ obeys the (deterministic) bounds \eqref{m135}, \eqref{m135A}, the limit $\tilde\vt$ belongs to the same class. In particular, the limit temperature
$\tilde\vt$ enjoys the regularity claimed in Theorem \ref{wWP1}.
\\
The proof of  Theorem \ref{wWP1} is hereby complete.
\subsection{Energy balance}
\label{wWS1S3}

We show that any solution of the approximate problem (\ref{wW3}--\ref{m119}) satisfies a variant of the energy balance
equation. To this end, we take the scalar product
of (\ref{wW4}) with $\vu$ and integrate the resulting expression by parts.
We apply It\^{o}'s formula to the scalar product
\[
\intQ{ \Pi_m (\vr \vu) \cdot \vu } = \intQ{ \vr |\vu|^2 }.
\]
As
\[
\intQ{ \mathcal{M}^{-1}[\vr] \Pi_m [\vc{v}] \cdot \Pi_m [\vr \vu] } = \intQ{ \vr \mathcal{M}^{-1}[\vr] \Pi_m [\vc{v}] \cdot \vu } =
\intQ{ \vc{v} \cdot \vu },
\]
we deduce from (\ref{wW4}) that
\begin{align}
\D& \intQ{ \frac{1}{2} \vr |\vu|^2 }\nonumber\\ = &- \intQ{ \Big[ \Div (\vr [\vu]_R \otimes \vu)
+   \chi(\| \vu \|_{H_m} - R )\Grad p_\delta (\vr,\vt) \Big] \cdot \vu } \dt\nonumber\\
&+  \intQ{ \Big[ \ep \Del (\vr \vu) + \chi(\| \vu \|_{H_m} - R ) \Div \mathbb{S}(\vt,\Grad \vu)+\frac{1}{m} \bfu \Big] \cdot \vu } \dt - \frac{1}{2} \intQ{ |\vu|^2 \D \vr }\nonumber
\\
& + \frac{1}{2}\sum_{k\geq0} \intQ{ \vr |\Pi_m [\vc{F}_{k,\ep, \xi} ] |^2 } \dt +
\intQ{  \vr \Pi_m [ \vc{F}_{\ep, \xi} ]  \cdot \vu } \ \D \W.\label{wWS118}
\end{align}
Furthermore, equation (\ref{wW3}) tells us that
\[
\frac{1}{2} \intQ{ |\vu|^2 \D \vr } =  \frac{1}{2} \intQ{ \ep |\vu|^2 \Del \vr }\dt - \frac{1}{2} \intQ{ \Div (\vr [\vu]_R ) |\vu|^2 } \dt,
\]
while
\[
\intQ{ \Div (\vr [\vu]_R \otimes \vu) \cdot \vu } = - \frac{1}{2} \intQ{ \vr [\vu]_R \cdot \Grad |\vu|^2 } =
\frac{1}{2}  \intQ{ \Div (\vr [\vu]_R ) |\vu|^2 },
\]
and
\[
\ep \intQ{ \Del (\vr \vu) \cdot \vu } = - \ep \intQ{ \vr |\Grad \vu|^2 } + \frac{1}{2} \intQ{ \ep |\vu|^2 \Del \vr }.
\]
Consequently, relation (\ref{wWS118}) reduces to
\begin{align} \nonumber
\D \intQ{ \frac{1}{2} \vr |\vu|^2 } &+ \chi(\| \vu \|_{H_m} - R )\intQ{ \mathbb{S}(\vt,\Grad \vu) : \Grad \vu } \dt + \frac{1}{m} \intQ{ |\vu|^2 } \dt + \ep \intQ{ \vr |\Grad \vu|^2 } \dt  \\
\label{wWS119}&=  \intQ{
\chi(\| \vu \|_{H_m} - R ) \, p_\delta (\vr,\vt) \Div \vu } \dt\\
& + \frac{1}{2} \intQ{ \vr |\Pi_m [\vc{F}_{\ep, \xi}] |^2 } \dt +
\intQ{  \vr \Pi_m [ \vc{F}_{\ep, \xi} ]  \cdot \vu } \ \D \W.\nonumber
\end{align}
\\
Seeing that
\[
\chi(\| \vu \|_{H_m} - R ) \,p_\delta (\vr,\vt) \Div \vu = p_\delta (\vr,\vt)\Div [\vu]_R
\]
we rewrite the energy balance (\ref{wWS119}) as
\begin{align} \nonumber
\D \intQ{ \frac{1}{2} \vr |\vu|^2 }
=& - \chi(\| \vu \|_{H_m} - R )\intQ{ \mathbb{S}(\vt,\Grad \vu) : \Grad \vu } \dt  + \intQ{
 p_\delta (\vr,\vt) \Div [\vu]_R } \dt\\
\label{wWS119'}& - \ep \intQ{ \vr |\Grad \vu|^2 } \dt- \frac{1}{m} \intQ{ |\vu|^2 } \dt+ \sum_{k\geq0}\frac{1}{2} \intQ{ \vr |\Pi_m [\vc{F}_{k,\ep, \xi}] |^2 } \dt\\& +
\intQ{  \vr \Pi_m [ \vc{F}_{\ep, \xi} ]  \cdot \vu } \ \D \W.
\nonumber
\end{align}
For the first term on the right-hand side we use the approximate internal energy equation \label{wWS11b} to see
\begin{align*}
-&{\ep \int_{Q} \vr |\Grad \vu|^2 \dx }   -\chi(\| \vu \|_{H_m} - R )\int_{Q}\tens{S}(\vartheta,\Grad\vc u): \Grad \vc u\dx +\int_{Q}
{p(\varrho, \vartheta) \Div [\vc u]_R }\dx \\
&= \intQ{\left[ -\partial_t(\varrho e_\delta(\varrho, \vartheta)) -\ep
\delta (\beta \varrho^{\beta - 2} + 2) |\Grad \varrho |^2 +
\delta \frac{1}{{\vartheta}^2} - \ep \vartheta^5 \right] }.
\end{align*}
{Next, multiplying the equation of continuity on $b'(\vr)$ we deduce a renormalized equation}
\[
{\rm d} b(\vr) + \Div (b(\vr) [\vu]_R ) + \left( b'(\vr) \vr - b(\vr) \right) \Div [\vu]_R \dt = \Div (b'(\vr) \Grad \vr) \dt - b''(\vr) |\Grad \vr |^2 \dt
\]
{for any twice continuously differentiable function $b$.}
Inserting this into \eqref{wWS119'} we can write the energy balance in its
final form
\begin{equation} \label{wWS121}
\begin{split}
\D & {\intQ{ \left[ \frac{1}{2} \vr |\vu|^2 + \varrho e_\delta(\vr,\vt) + \delta \left( \frac{\vr^\beta}{\beta - 1} + \vr^2 \right)  \right] } } \\  &+ {\frac{1}{m} \intQ{ |\vu|^2 } \dt + \ep \intQ{ \vartheta^5}\dt}  \\
& =\intQ{\delta \frac{1}{{\vartheta}^2}}\dt+  \frac{1}{2}\sum_{k\geq0} \intQ{ \vr |\Pi_m [\vc{F}_{k,\ep, \xi}] |^2 } \dt +
\intQ{  \vr \Pi_m [ \vc{F}_{\ep, \xi} ]  \cdot \vu } \ \D W.
\end{split}
\end{equation}
\\
We have shown the following version of the energy balance for the approximate martingale solutions.

\begin{proposition} \label{wWP2}

Under the hypotheses of Theorem \ref{wWP1}, let $(\vr,\vt, \vu, \W)$ be a martingale solution of the approximate problem \eqref{wW3}--\eqref{m119}.
\\
Then
the following total energy energy equation
\begin{align} \label{EI20a0}
\begin{aligned}
- \int_0^T& \partial_t \psi
\bigg(\int_{Q} {\mathcal E_\delta(\varrho,\vartheta, \vu) } \dx\bigg) \dt
 + \int_0^T {\psi} \int_{Q} \Big( \ep \vartheta^5+ \frac{1}{m} |\vu|^2\Big)  \dt\\
= &\,\psi(0) \intTor{{\mathcal E_\delta(\varrho_0,\vartheta_0, \vu_0 )} }+\int_0^T\int_{Q}\frac{\delta}{\vartheta^2}\psi\dx\dt\\
&+ \frac{1}{2} \int_0^T
\psi \bigg(
\intQ{ \sum_{k \geq 1} \varrho | \Pi_m[{\bf F}_{k,\ep, \xi } (\varrho,\vartheta, {\bf u})] |^2  } \bigg) {\rm d}t\\
&+\int_0^T  \psi\int_{Q}\varrho\Pi_m[{\vc{F}_{\ep, \xi} }(\varrho,\vartheta,\bfu)]\cdot\bfu\,\dd W\dx
\end{aligned}
\end{align}
holds true for any deterministic test function $\psi \in \DC[0,T)$ ,  $\mathbb{P}$-a.s. Here, we abbreviated
$${\mathcal E_\delta(\varrho,\vartheta, \vu)= \frac{1}{2} \varrho | {\bf u} |^2 + \varrho e_{\delta}(\varrho,\vartheta)
+ \delta \left( \frac{\vr^\beta}{\beta - 1} + \vr^2 \right) .}$$

\end{proposition}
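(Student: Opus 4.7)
The plan is to derive the energy balance pathwise by combining an It\^o-type expansion of the kinetic energy $\tfrac{1}{2}\intQ{\vr|\vu|^2}$ with the deterministic internal energy equation \eqref{rhos0} and the renormalized continuity equation, as already sketched in equations \eqref{wWS118}--\eqref{wWS121}. The starting point is the observation that the momentum equation \eqref{rhou0}, viewed in $H_m$, can be recast in the form of an It\^o SDE for $\vu = \mathcal{M}^{-1}[\vr]\Pi_m[\vr\vu]$ in the finite-dimensional space $H_m$; because all norms on $H_m$ are equivalent and $\vr$ is bounded from below away from zero by \eqref{wWS110a}, the coefficients are sufficiently regular for a direct application of It\^o's formula in $H_m$ to the functional $\vu\mapsto\intQ{\frac{1}{2}\vr|\vu|^2}=\frac{1}{2}\langle\mathcal{M}[\vr]\vu,\vu\rangle_{H_m}$, where the time-dependence of $\vr$ is handled via the chain rule using the (deterministic, pathwise classical) continuity equation \eqref{rho0}.

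First I would test \eqref{rhou0} by $\vu(t)$ and use the key identity
\[
\intQ{\mathcal{M}^{-1}[\vr]\Pi_m[\vc v]\cdot\Pi_m[\vr\vu]}=\intQ{\vc v\cdot\vu},
\]
so that each term of the momentum equation pairs cleanly with $\vu$. It\^o's correction produces the quadratic variation contribution $\tfrac{1}{2}\sum_{k}\intQ{\vr|\Pi_m[\vc F_{k,\ep,\xi}]|^2}\,\dt$, and the $\D\vr$-term combines with the convective term $\Div(\vr[\vu]_R\otimes\vu)\cdot\vu$ and with $\ep\Del(\vr\vu)\cdot\vu$ via integration by parts and \eqref{rho0} to cancel the problematic cross-contribution and to yield the dissipation $-\ep\intQ{\vr|\Grad\vu|^2}$, together with $-\chi(\|\vu\|_{H_m}-R)\intQ{\bfS(\vt,\Grad\vu):\Grad\vu}-\tfrac{1}{m}\intQ{|\vu|^2}$. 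This reproduces \eqref{wWS119'}.

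Next I would add the pointwise identity \eqref{rhos0} integrated over $Q$ to \eqref{wWS119'}: the quantities $-\ep\intQ{\vr|\Grad\vu|^2}$, $-\chi(\|\vu\|_{H_m}-R)\intQ{\bfS:\Grad\vu}$, and $\intQ{p(\vr,\vt)\Div[\vu]_R}$ cancel exactly against the corresponding terms in the internal energy balance, while the divergence terms $\Div(\vr e_\delta(\vr,\vt)[\vu]_R)$ and $\Div(\kappa_\delta(\vt)\Grad\vt)$ integrate to zero because of the Neumann conditions \eqref{BCba}. What remains on the right is precisely $-\ep\delta(\beta\vr^{\beta-2}+2)|\Grad\vr|^2+\delta/\vt^2-\ep\vt^5$, plus the stochastic integral and the It\^o correction. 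Finally, applying the renormalized continuity equation with $b(\vr)=\delta(\vr^\beta/(\beta-1)+\vr^2)$ (a twice continuously differentiable function on the positive range of $\vr$, justified by \eqref{wWS110a}) and integrating, the term $\ep\delta(\beta\vr^{\beta-2}+2)|\Grad\vr|^2$ is exactly absorbed into $\dd\intQ{\delta(\vr^\beta/(\beta-1)+\vr^2)}$. Rewriting everything in the weak-in-time sense against a test function $\psi\in\DC[0,T)$ (integration by parts in time using that $\psi(T)=0$) yields exactly \eqref{EI20a0}.

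The main obstacle is a clean justification of It\^o's formula for $\intQ{\tfrac{1}{2}\vr|\vu|^2}$: the density $\vr$ is itself time-dependent and stochastic, so one cannot apply the standard Hilbert-space It\^o calculus to $\vu$ alone with a fixed quadratic form. This is resolved by using that, by \eqref{wWS110a}, $\vr$ has $C^1$ trajectories in $t$ with values in $C^{2+\nu}(\Ov{Q})$ and is bounded away from $0$, so the operator $\mathcal{M}[\vr(t)]$ is a $C^1$ family of uniformly coercive symmetric operators on $H_m$; the product rule together with the It\^o formula on the finite-dimensional space $H_m$ applied to the joint process $(\vr,\vu)$ then produces the desired expansion, with the $\D\vr$-contribution treated deterministically via \eqref{rho0} as outlined above. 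Once this is in place, all the remaining manipulations are the pointwise computations already displayed in \eqref{wWS118}--\eqref{wWS121}.
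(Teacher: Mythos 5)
Your proposal is correct and follows essentially the same route as the paper: apply It\^{o}'s formula to $\frac12\intQ{\vr|\vu|^2}$ via the identity for $\mathcal{M}^{-1}[\vr]$, handle the $\D\vr$-contribution pathwise through the classical continuity equation to cancel against the convective and artificial-viscosity terms, add the pointwise internal energy balance, and absorb the surviving dissipative terms by integrating the renormalized continuity equation with $b(\vr)=\delta(\vr^\beta/(\beta-1)+\vr^2)$. Your remark on the justification of It\^{o}'s formula with a time-dependent, stochastic $\vr$ is exactly the right concern, and your resolution (treating the $\vr$-contribution deterministically, pathwise, via the product rule) matches the paper's implicit splitting $\D\Pi_m(\vr\vu)=\Pi_m(\partial_t\vr\,\vu)\dt+\Pi_m(\vr\,\D\vu)$.
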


\begin{Remark} \label{wWR8}

Consistently with the weak formulation of the field equations in Definition \ref{WD1}, we have rewritten \eqref{wWS121} in the form of a variational
equality with a deterministic test function $\psi$.

\end{Remark}

\subsection{{Entropy balance}}

As equations (\ref{rho0}) and (\ref{rhos0}) are satisfied in the strong sense and $\vt$ is strictly positive, we may divide (\ref{rhos0}) by $\vt$  obtaining the (regularized) entropy balance equation
\begin{equation} \label{apeneq}
\begin{split}
\partial_t &(\varrho s_\delta(\varrho, \vartheta)) +
\Div (\varrho s_\delta (\varrho, \vartheta) [\vc u]_R) - \Div \Big[
\Big( \frac{\kappa(\vartheta)}{\vartheta} + \delta (
\vartheta^{\beta - 1} + \frac{1}{\vartheta^2}) \Big) \Grad
\vartheta \Big] \\
&= \frac{1}{\vartheta} \Big[ \chi\left( \| \vu \|_{H^m} - R \right) \tens{S}(\vt, \Grad \vu) : \Grad \vc u + \Big(
\frac{\kappa(\vartheta)}{\vartheta} + \delta ( \vartheta^{\beta -
1} + \frac{1}{\vartheta^2}) \Big) |\Grad \vartheta|^2 + \delta
\frac{1}{{\vartheta}^2} \Big]
\\ &+ \frac{\ep \delta}{\vartheta} ( \beta \varrho^{\beta - 2} + 2)
|\Grad \varrho|^2
+\ep \frac{\Delta_x \varrho}{\vartheta} \Big( \vartheta
s_\delta(\varrho, \vartheta) - e_\delta(\varrho, \vartheta) -
\frac{p(\varrho, \vartheta)}{\varrho} \Big) - \ep \vartheta^4 + {\frac{\ep}{\vt} \vr |\Grad \vu|^2}
\end{split}
\end{equation}
satisfied a.a. in $(0,T) \times Q$, together with the boundary conditions $\Grad \vt \cdot \vc{n}|_{\partial Q} = 0$.

\section{{Galerkin approximation}}
\label{EGA}

Our goal is to perform several steps: (i) letting $R \to \infty$ in the approximate system (\ref{wW3})--(\ref{m119}), (ii) letting $m \to \infty$ in the resulting limit,
(iii) letting the parameter $\xi \to 0$.
The technique in these three steps is rather similar and is based on the uniform bounds enforced by the data.
In the following we amply use the Korn--Poincar\' e inequality:
\begin{equation} \label{Poinc}
\| \vc{v} \|^2_{W^{1,2}(Q, R^3)} \leq c \intQ{ \frac{1}{\vt} \mathbb{S}(\vt, \Grad \vc{v}): \Grad \vc{v} } \ \mbox{for all}\
\vc{v} \in W^{1,2}(Q; R^3),\ \vc{v}|_{\partial Q} = 0,
\end{equation}
cf. also hypothesis \eqref{m105}.

\subsection{Uniform bounds}

We start by introducing the \emph{ballistic free energy},
\[
H_\Theta(\vr, \vt) = \vr \left( e(\vr, \vt) - \Theta s (\vr, \vt) \right),\
H_{\delta, \Theta}(\vr, \vt) = \vr \left( e_\delta(\vr, \vt) - \Theta s_\delta (\vr, \vt) \right),
\]
cf. \cite[Chapter 2, Section 2.2.3]{F}.
Combining the total energy balance \eqref{EI20a0}, with the entropy equation \eqref{apeneq}
we get the \emph{total dissipation balance}
\begin{align} \label{wWS24}
\begin{aligned}
&
 \intQ{ \Big[ \frac{1}{2} \varrho | {\bf u} |^2 + H_{\delta,\Theta}(\varrho,\vartheta)+\frac{\delta}{\beta-1}\varrho^\beta
+ { \delta \vr^2}  \Big](\tau, \cdot) }
+\Theta \int_0^\tau\int_{\mt}\sigma_{R,m, \varepsilon,\delta}\dx\\
&+ \int_0^\tau\int_{Q} \Big( \ep \vartheta^5+  \frac{1}{m}|\vu|^2\Big)  \dt\\
&= \,\intQ{ \Big[ \frac{1}{2} \vr_0 |\vu_0|^2  + H_{\delta,\Theta}(\varrho_0,\vartheta_0)+\frac{\delta}{\beta-1}\varrho_0^\beta + {\delta \vr_0^2} \Big] }\\&+\varepsilon\int_0^\tau\int_{Q}\frac{\Theta}{\vartheta^2}\bigg(e_{M,\delta}(\varrho,\vartheta)+\varrho\frac{\partial e_M}{\partial\varrho}(\varrho,\vartheta)\bigg)\nabla\varrho\cdot\nabla\vartheta\dx\dt\\
&+\int_0^\tau\int_{Q}\bigg(\frac{\delta}{\vartheta^2}+\varepsilon\Theta \vartheta^4\bigg)\dx\dt
+\int_0^\tau \int_{Q}\varrho\Pi_m[{\vc F}_{\ep, \xi} (\varrho,\vartheta,\bfu)]\cdot\bfu\,\dd W\dx\\&+ \frac{1}{2} \int_0^\tau
\bigg(
\intQ{ \sum_{k \geq 1} \varrho | \Pi_m[{\bf F}_{k,\ep, \xi} (\varrho,\vartheta,{\bf u})] |^2  } \bigg) {\rm d}t
\end{aligned}
\end{align}
for any $0 \leq \tau \leq T$, and any positive constant $\Theta > 0$, $\pas$,
where
\begin{align*}
\sigma_{R,m, \varepsilon,\delta}&=\frac{1}{\vartheta}\Big[\chi(\| \vu \|_{H_m} - R )\mathbb S(\vartheta,\nabla\bfu):\nabla\bfu+\frac{\kappa(\vartheta)}{\vartheta}|\nabla\vartheta|^2+\frac{\delta}{2}\Big(\varrho^{\beta-1}+\frac{1}{\vartheta^2}\Big)|\nabla\vartheta|^2+\delta\frac{1}{\vartheta^2}\Big]\\
&+\frac{\varepsilon\delta}{2\vartheta} {\left( \beta\varrho^{\beta-2} + 2 \right)} |\nabla\varrho|^2+\varepsilon\frac{\partial p_M}{\partial\varrho}(\varrho,\vartheta)\frac{|\nabla\varrho|^2}{\varrho\vartheta} + \frac{\vr}{\vt} |\Grad \vu|^2 .
\end{align*}
\\
Keeping $\ep > 0$, $\delta > 0$, $\xi > 0$ fixed we derive bounds independent of the parameters $R$ and $m$.
As the projections $\Pi_m$ are bounded by \eqref{wW2a}, we get
\begin{equation} \label{wWS25}
\begin{split}
\sum_{k\geq0}\int_{Q} &\vr \left| \Pi_m \left[ \vc{F}_{k,\ep, \xi} (\vr, \vartheta ,\vu) \right] \right|^2 \dx \aleq \sum_{k\geq0}\| \vr \|_{L^1_x} \| \mathbf F_{k,\ep, \xi} (\vr,\vartheta, \vu) \|^2_{W^{2,2}_x}\\
 &\aleq c(\xi) \sum_{k\geq0}\| \vr \|_{L^1_x } \| \vc{F}_{k,\ep} (\vr, \vartheta, \vu) \|^2_{L^\infty_x} \aleq c(\ep, \xi, \Ov{\vr})
\end{split}
\end{equation}
using also
\begin{align}\label{eq:2610}
\| \vr \|_{L^1_x }=\| \vr_0 \|_{L^1_x }\leq\overline\vr.
\end{align}
Next, by means of the Burkholder-Davis-Gundy inequality,
\begin{equation} \label{wWS26}
\begin{split}
&\expe{ \sup_{0 \leq t \leq \tau} \left| \int_0^t \intQ{  \vr \Pi_m \Big[ \vc{F}_{\ep, \xi} (\vr, \vartheta, \vu) \Big]
\cdot \vu }  \ \D \W \right|^r }\\ &\aleq  \expe{ \int_0^\tau \sum_{k \geq 0} \left| \intQ{ \vr \Pi_m \left[ \vc{F}_{k,\ep, \xi} (\vr , \vartheta, \vu) \right]
\cdot \vu } \right|^2 }^{r/2}  , \ r \geq 1.
\end{split}
\end{equation}
Furthermore, using once more (\ref{wW2a}), we deduce
\[
\begin{split}
\left| \intQ{ \vr \Pi_m \left[ \vc{F}_{k,\ep, \xi} (\vr , \vartheta, \vu) \right]
\cdot \vu } \right|^2 &\aleq \Big| \| \sqrt{\vr} \|_{L^{2}_x} \| \sqrt{\vr} \vu \|_{L^2_x} \| \Pi_m [ \vc{F}_{k,\ep, \xi}(\vr, \vartheta, \vu) ] \|_{L^\infty_x} \Big|^2
\\ &\aleq c(\xi,\overline\vr) \| \sqrt{\vr} \vu \|_{L^2_x}^2 \| \vc{F}_{k,\ep}(\vr, \vartheta, \vu)  \|^2_{L^\infty_x} \leq c(\xi,\overline\vr) f_{k,\ep}^2 \| \sqrt{\vr} \vu \|_{L^2_x}^2.
\end{split}
\]
\\
Next,
we observe that the term
$\delta/ \vartheta^2$ on the right-hand side of (\ref{wWS24}) is
dominated by its counterpart $\delta/\vartheta^3$ in the entropy
production term $\sigma_{R,m, \ep, \delta}$. Analogously, the quantity
$\ep \Theta \vartheta^4$ on the right hand side
is ``absorbed'' by the term $\ep \vartheta^5$ at the left hand side
of (\ref{wWS24}).
\\
Consequently, it remains to handle the quantity
\[
\ep \intQ{ \frac{1}{\vartheta^2} \Big( e_M(\varrho, \vartheta) +
\varrho \frac{\partial e_M (\varrho, \vartheta)}{\partial \varrho }
\Big) \Grad \varrho \cdot \Grad \vartheta }
\]
appearing on the right-hand side of (\ref{wWS24}).
To this end, we first use hypothesis (\ref{md8!}), together with
(\ref{md7}) and (\ref{md1}), in order to obtain
\[
\Big| \frac{1}{\vartheta^2} \Big( e_M(\varrho, \vartheta) + \varrho
\frac{\partial e_M (\varrho, \vartheta)}{\partial \varrho } \Big)
\Grad \varrho \cdot \Grad \vartheta \Big| \leq c \Big(
\frac{\varrho^{\frac{2}{3}} + \vartheta}{\vartheta^2} \Big) |\Grad
\varrho| |\Grad \vartheta|,
\]
where, furthermore,
\[
\frac{|\Grad \varrho| |\Grad \vartheta|}{\vartheta} \leq \kappa
\frac{|\Grad \varrho |^2 }{\vartheta} + c(\kappa) \frac{|\Grad
\vartheta|^2}{\vartheta} \ \mbox{for any}\ \kappa>0,
\]
and, similarly,
\[
\frac{\varrho^{\frac{2}{3}} |\Grad \varrho| |\Grad
\vartheta|}{\vartheta^2} \leq \kappa
\frac{\varrho^{\frac{4}{3}}|\Grad \varrho |^2 }{\vartheta} +
c(\kappa) \frac{|\Grad \vartheta|^2}{\vartheta^3}.
\]
Thus we infer that
\begin{align}\label{m140}
\begin{aligned}\ep  \int_{Q}& \frac{1}{\vartheta^2}
\Big| e_M(\varrho, \vartheta) + \varrho \frac{\partial e_M (\varrho,
\vartheta)}{\partial \varrho } \Big| |\Grad \varrho| |\Grad
\vartheta| \dx \\
&\leq \frac{1}{2}  \intQ{ \Big[ \delta \Big( \vartheta^{\beta - 2} +
\frac{1}{\vartheta^3} \Big) |\Grad \vartheta |^2 + \frac{\ep
\delta}{\vartheta} \Big( \beta \varrho^{\beta - 2} + 2 \Big)
|\Grad \varrho |^2 \Big] }
\end{aligned}
\end{align}
provided $\ep = \ep(\delta) > 0$ is small enough.
Consequently, passing to expectations in (\ref{wWS24}) we may apply Gronwall's inequality to conclude that
\begin{align}
\E\bigg[ \sup_{0\leq \tau\leq T}\int_{Q}&\Big[ \frac{1}{2} \varrho | {\bf u} |^2 + H_{\delta,\Theta}(\varrho,\vartheta)+\frac{\delta}{\beta-1}\varrho^\beta + \delta \vr^2\Big] \dx
+\Theta \int_0^T\int_{Q}\sigma_{R,m, \varepsilon,\delta}\dx
 +\varepsilon\int_0^T\int_{Q}\vartheta^5\dx\dt\bigg]^r\nonumber\\
 \label{wWS27}\leq &\,c(T, \xi) \,\E\bigg[\intQ{ \Big[ \frac{1}{2} \vr_0 |\vu_0|^2   + H_{\delta,\Theta}(\varrho_0,\vartheta_0)+\frac{\delta}{\beta-1}\varrho_0^\beta + \delta \vr^2_0 \Big] }\bigg]^r
\end{align}
for any $r\geq1$.
\\
Taking into account the properties of the function
$H_{\delta,\Ov{\vartheta}}$, see \cite[Section 2.2.3, (2.49), (2.50)]{F}, we obtain
the following bounds depending only on the initial data $(\vr_0, \vt_0, \vu_0)$ determined in terms of their law $\Lambda$,
and the parameter $\xi$:
\begin{equation}\label{m142*}
\begin{split}
\expe{ \left|  \sup_{t \in (0,T)} \intQ{ \Big( \frac{1}{2}
\varrho |\vc u|^2 + H_{\delta,\Theta}(\varrho, \vartheta) +
{\delta}(\frac{ \varrho^{\beta}}{\beta - 1} + \varrho^2) \Big)
 } \right|^r } &\leq c(r, \xi, \Lambda)  ,
\\
\expe{ \left| \int_0^{T}\chi(\| \vu \|_{H_m} - R ) \intO{ \frac{1}{\vartheta} \Big[
\tens{S}(\vartheta,\Grad\vc u) : \Grad \vc u\Big] } \dt \right|^r } &\leq c(r, \xi, \Lambda),
\\
\expe{ \left| \int_0^{T}\intO{\frac 1\vartheta\Big(
\frac{\kappa(\vartheta)}{\vartheta} + \delta ( \vartheta^{\beta -
1} + \frac{1}{\vartheta^2}) \Big) |\Grad \vartheta|^2 \Big)} \ \dt \right|^r } &\leq c(r, \xi, \Lambda),
\\
\expe{ \left| \int_0^{T}\intO{\Big( \ep \delta
\frac{1}{{\vartheta}^3}+ \ep \vartheta ^5+ \frac{1}{m}|\bfu|^2+ {\ep \frac{\vr}{\vt}|\nabla\bfu|^2} \Big)} \ \dt \right|^r } &\leq c(r, \xi, \Lambda),
\\
{\ep \delta} \expe{ \left| \int_0^{T} \intO{ \frac{1}{\vartheta} (
\beta \varrho^{\beta - 2}+ 2)|\Grad \varrho|^2 } \right|^r \dt} &\leq c(r, \xi, \Lambda) ,
\\
\ep \expe{\left| \int_0^{T}\intO{\frac{\Ov\vartheta}{\varrho\vartheta}\frac{\partial
p_M}{\partial\varrho}(\varrho,\vartheta)|\Grad\varrho|^2 }\ {\rm d}t \right|^r } &\leq c(r, \xi, \Lambda).
\end{split}
\end{equation}
\\
As all norms are equivalent on the finite-dimensional space $H_m$ and $\partial Q$ is regular,,
we deduce from \eqref{m142*}$_4$ that
\begin{equation} \label{wWS28}
\begin{split}
&\expe{ \left| \int_0^T \| \vu \|_{C^{2}(\Ov{Q})}^2 \dt\right|^r }  \leq c(m, \xi, \ep, \Lambda).
\end{split}
\end{equation}
\\
Finally, we recall the coercivity properties of the function $H_{\delta, \Theta}$, see \cite[Chapter 3, Proposition 3.2]{F}:
\begin{equation} \label{Nhelm}
H_{\delta, \Theta} (\vr, \vt) \geq \frac{1}{4} \left( \vr e_\delta (\vr, \vt) + \Theta \vr |s(\vr, \vt)| \right) -
\left| (\vr - \Ov{\vr}) \frac{ \partial H_{\delta, 2 \Theta} }{\partial \vr} (\Ov{\vr}, 2 \Theta) + H_{\delta, 2 \Theta}(\Ov{\vr}, 2 \Theta )
\right|
\end{equation}
for any positive $\vr, \vt, \Ov{\vr}, \Theta$. Consequently, (\ref{m142*})$_1$ gives rise to
\begin{equation} \label{Nm142*}
\expe{ \left| \sup_{t \in (0,T) } \intO{ \left( \vt^4 + \delta \vr |\log(\vt)| \right) } \right|^r } \leq c(\xi, \Lambda).
\end{equation}

\subsection{Limit $R \to \infty$}
\label{wWS2S2}

Keeping $m$ fixed we consider
the rather restrictive hypothesis on the initial distribution of the data imposed in Theorem \ref{wWP1}:
\[
\begin{split}
\Lambda = \Lambda_R ,\ \Lambda_R &\left\{ 0 < \underline{\vr}_R \leq \vr_0 ,\
\| \vr_0 \|_{C^{2 + \nu}_x } \leq \Ov{\vr}_R,\ \Grad \vr_0 \cdot \vc{n}|_{\partial Q} = 0\right\} = 1 \\&
\Lambda_R \left\{ 0 < \underline{\vt}_R \leq \vt_0, \ \| \vt_0 \|_{W^{1,2}_x \cap C_x} \leq \Ov{\vt}_R  \right\}  = 1\\
&\int_{C^{2 + \nu}_x \times W^{1,2}_x \times H_m } \| \vu_0 \|^r_{H_m} \ {\rm d} \Lambda_R \leq \Ov{u}_R \ \mbox{for any}\ r \geq 1.
\end{split}
\]
As the uniform bounds will be lost in the limit $R \to \infty$ we suppose that
\begin{equation} \label{nGL1}
\Lambda_R \to \Lambda \ \mbox{weakly-(*) in} \ \mathcal{M}^+ \left( C^{2 + \nu} \times W^{1,2} \times H_m \right),
\end{equation}
where
\[
\begin{split}
&\Lambda_R \left\{ 0 < \underline{\vr} \leq \intQ{ \vr_0 } \leq \Ov{\vr},\ \Grad \vr_0 \cdot \vc{n}|_{\partial Q} = 0,\ \vr_0,\ \vt_0 > 0 \right\}\\
& =
\Lambda \left\{ 0 < \underline{\vr} \leq \intQ{ \vr_0 } \leq \Ov{\vr},\ \Grad \vr_0 \cdot \vc{n}|_{\partial Q} = 0,\ \vr_0,\ \vt_0 > 0 \right\}
= 1,\\ &\int_{C^{2 + \nu} \times W^{1,2} \times H_m }\left[ \| \vr_0 \|_{C^{2 + \nu}_x}
 + \| \vt_0 \|_{W^{1,2}_x \cap C_x } + \| \vu_0 \|_{H_m} \right]^r {\rm d} \Lambda_R \leq c(r)
\ \mbox{uniformly in}\ R,
\end{split}
\]
\\
By virtue of Theorem \ref{wWP1}, the approximate problem \eqref{wW}, \eqref{BCba} admits a martingale solution
$(\vr_R, \vt_R, \vu_R)$ with the initial law $\Lambda_R$ for any fixed $R > 0$. Our first goal is to justify the limit
$R \to \infty$.
The strategy is similar to Section \ref{wWS1S2}; we establish compactness of the phase variables and use
a variant of Skorokhod representation theorem.

\subsubsection{Compactness}

We start recalling the standard parabolic maximal regularity estimates (see e.g. \cite{HIPR} or \cite{LSU}) applied to (\ref{wW3}):
\begin{equation} \label{wWS220}
\begin{split}
\| \partial_t \vr \|_{L^p(0,T; L^q(Q))}&+\| \vr \|_{L^p(0,T; W^{2,q}(Q)) }\\ &\aleq \| \Div (\vr [\vu]_R) \|_{L^p(0,T; L^q(Q))} + \| \vr_0 \|_{C^{2 + \nu}(Q)}, \\
\| \vr \|_{L^p(0,T; W^{1,q}(Q)) } &\aleq \| \vr [\vu]_R \|_{L^p(0,T; L^q(Q))} + \| \vr_0 \|_{C^{2 + \nu}(Q)},
\end{split}
\end{equation}
for $1 < p,q < \infty$.
In (\ref{wWS220}), the regularity of the initial data can be considerably weakened. However, such generality is not needed here.
Now observe that (\ref{m142*}), (\ref{wWS28}) give rise to
\[
\expe{ \left| \| \vr [\vu]_R \|_{L^1(0,T; L^3(Q))} \right|^r } \leq c(r,m,R).
\]
It is worth noting that this estimate is independent of $R$ as long as
\begin{equation} \label{newEST}
\expe{ \left| \int_0^T \| \vu \|^2_{W^{1,2}(Q)} \dt \right|^r } \aleq 1.
\end{equation}
As we shall see below, estimate \eqref{newEST} remain valid at any stage of approximation.
\\
This interpolated with (\ref{m142*}) yields
\[
\expe{ \left| \| \vr \vu \|_{L^p(0,T; L^p(Q))} \right|^r } \aleq c(r) \ \mbox{for a certain}\ p > 2;
\]
which, plugged in the right--hand of (\ref{wWS220}), implies
\begin{equation} \label{wWS221}
\expe{ \left| \| \vr \|_{L^p(0,T; W^{1,p}(Q)) } \right|^r } \aleq c(r) \ \mbox{for some}\ p > 2.
\end{equation}
Finally, the estimates (\ref{wWS220}) and (\ref{wWS221}) can be used again in (\ref{wWS220}) to conclude that
\begin{equation} \label{wWS222}
\expe{ \left| \| \partial_t \vr \|_{L^p(0,T; L^p(Q))}+ \| \vr \|_{L^p(0,T; W^{2,p}(Q))} \right|^r } \aleq c(r)
\end{equation}
for some $p > 1$, where $c(r)$ depends also on the initial data, in particular on $\E\| \vr_0 \|_{C^{2 + \nu}_x}^r $.
\\
Now,
following the arguments introduced in Section \ref{wWS1S2}, we show compactness of $\vr \vu$ with respect to the time variable.
Similarly to Section \ref{wWS1S2} and in view of the bounds established above, it is enough to check the time continuity of the stochastic integral, namely
\[
\expe{ \left\| \int_{\tau_1}^{\tau_2}  \vr \Pi_m [\vc{F}_{\ep, \xi} (\vr, \vartheta, \vu) ] \D \W \right\|^r_{W^{-k,2}_x} }, \ k > \frac{N}{2}.
\]
Applying again the Burkholder-Davis-Gundy inequality, we obtain
\[
\begin{split}
&\expe{ \left\| \int_{\tau_1}^{\tau_2}  \vr \Pi_m [\vc{F}_{\ep, \xi} (\vr,\vartheta, \vu) ] \D \W \right\|^r_{ W^{-k,2}_x} } \\
&\aleq |\tau_1 - \tau_2 |^{r/2} \expe{ \sum_{k \geq 1}\sup_{0 \leq t \leq T} \left( \left\| \vr \Pi_m[ \vc{F}_{k,\ep, \xi}(\vr, \vartheta, \vu) ] \right\|_{L^1_x}  \right)^2 }^{\frac{r}{2}}
\ \mbox{for any}\ r \geq 1.
\end{split}
\]
Next, by H\" older's inequality,
\[
\begin{split}
\left\| \vr \Pi_m[ \vc{F}_{k,\ep, \xi }(\vr, \vartheta, \vu) ] \right\|_{L^1_x} &\leq \left\| \vr \right\|_{L^1_x} \left\| \Pi_m [ \vc{F}_{k,\ep, \xi}(\vr \vartheta,
\vu) ]
\right\|_{L^{\infty}_x } \\ &\leq c(\Ov{\vr}) \left\| \vc{F}_{k,\ep, \xi } (\vr, \vartheta, \vu) \right\|_{W^{2,2}_x} \aleq
c(\Ov{\vr}, \xi )f_{k,\ep}
\end{split}
\]
using also \eqref{P-1new} and \eqref{eq:2610}.
Consequently, we may use the bounds (\ref{m142*}) and apply the Kolmogorov continuity criterion
to obtain
\begin{equation} \label{wWS224}
\expe{ \| \vr \vu \|_{C^{s}([0,T]; W^{-k,2}(Q)) }^r } \aleq c(r) \ \mbox{for a certain}\ 0 < s(r) < \frac{1}{2}
\end{equation}
provided $r \geq 2$, where $c(r)$ behaves like \eqref{wWS222}.
Apparently, estimates (\ref{wWS222}), (\ref{wWS224}) imply strong (pointwise) compactness of $\vr$ and $\vr \vu$ necessary for passing to the limit in the nonlinear terms. As we shall see below, the same property of the temperature will follow from equation \eqref{apeneq}.

\subsubsection{Asymptotic limit}
\label{s:AL1}

Suppose now that $m$ is fixed. Given a family of approximate solutions $(\vr_R, \vt_R, \vu_R)_{R > 0}$ we let $R \to \infty$.
Unfortunately, the available estimates are considerably weaker than those obtained in Section \ref{wWS1S2}, making the choice of the
appropriate \emph{path space} more delicate. In particular,
we have to use the \emph{weak topologies} that are in general not Polish. Here and hereafter we systematically follow the approach proposed in
\cite[Chapter 2.8]{BFHbook} and consider bounded sequences in Banach spaces together with their norms as a new random variables, applying
the standard Skorokhod theorem. The key result is the following theorem:
\begin{Theorem} \label{RDT4}

Let $(\vc{U}_{0,n})_{n \geq 1}$ be a sequence of
random variables in a Polish space $Y_0$, $(\vc{U}_n)_{n \geq 1}$ a sequence of random variables in $L^1(Q_T; R^M)$, and $(W_n)_{n \geq 1}$ a sequence of cylindrical Wiener processes
defined on a complete probability space $\{ \Omega, \mathfrak{B}, \prst \}$. Suppose that the family of laws of $(\vc{U}_{0,n})_{n \geq 1}$
is tight in $Y_0$. In addition, suppose that
for any $\ep > 0$, there exists $M > 0$ such that
\[
\begin{split}
\prst \left\{ \ \| \vc{U}_n  \|_{L^q(Q_T; R^M)} > M \ \right\} &< \ep \ \mbox{for some}\ q \geq 1;
\\
\prst \left\{ [[ \vc{U}_n ]] > M \ \right\} &< \ep
\end{split}
\]
uniformly for $n = 1,2, \dots$, where $[[ \cdot ]]: W^{-m,2}(Q_T) \to [0, \infty]$, $m > \frac{N+1}{2}$ is a Borel measurable function.
\\
Then there exist subsequences of random variables $(\tilde{\vc{U}}_{0,n(j)} )_{j \geq 1}$
in $Y_0$, $(\tilde{\vc{U}}_{n(j)} )_{j \geq 1}$,\ $\tilde{\vc{U}}_{n(j)} \in L^1(Q_T; R^M)$ and
cylindrical Wiener processes $\tilde{W}_{n(j)}$ on the standard probability space\\
$\Big\{ [0,1], \Ov{ \mathfrak{B}[0,1] }, \mathfrak{L} \Big\}$ enjoying the following properties $\mathfrak{L}-$a.s.:
\[
\begin{split}
\left[ \vc{U}_{0,n(j)}, \vc{U}_{n(j)}, W_{n(j)} \right] &\sim \left[ \tilde{\vc{U}}_{0, n(j)}, \tilde{\vc{U}}_{n(j)}, \tilde{W}_{n(j)} \right] \ \mbox{(equivalence in law)};
\\
\tilde{\vc{U}}_{n(j)} \to \vc{U} \ \mbox{in}\ W^{-m,2}(Q_T; R^M),\
g \left( \tilde{\vc{U}}_{n(j)} \right) &\rightharpoonup^\ast \Ov{g (\vc{U})}
\ \mbox{in}\ L^\infty(Q_T) \ \mbox{for any}\ g \in C_c(R^M);
\\
\tilde{\vc{U}}_{0, n(j)} \to \vc{U}_0 \ \mbox{in}\ Y_0,\ \tilde{W}_{n(j)} &\to W \ \mbox{in}\ C([0,T]; \mathfrak{U}_0);
\\
\sup_{j \geq 1} [[ \tilde{\vc{U}}_{n(j)} ]] &< \infty.
\end{split}
\]
\\
If, in addition $q > 1$, then $\mathfrak{L}-$a.s.
\[
\tilde{\vc{U}}_{n(j)} \to \vc{U} \ \mbox{weakly in}\ L^q(Q_T; R^M),\
f \left( \tilde{\vc{U}}_{n(j)} \right) \to \Ov{f (\vc{U})}
\ \mbox{weakly in}\ L^r(Q_T)
\]
for any $f \in C(R^M)$ such that
\[
| f(\vc{v}) | \leq c \left(1 + |\vc{v}|^{s} \right),\ 1 \leq s < q,\ r = \frac{q}{s} > 1.
\]

\end{Theorem}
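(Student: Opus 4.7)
The plan is to encode all the qualitative information demanded by the conclusion --- strong $W^{-m,2}$ convergence, weak-$*$ limits of every nonlinear composite $g(\vc{U}_n)$, convergence of the initial data in $Y_0$, and convergence of the Wiener processes in $C([0,T];\mathfrak{U}_0)$ --- into a single random variable ranging in a sub-Polish path space, and then to invoke the Jakubowski--Skorokhod representation theorem on the canonical base $\bigl([0,1],\overline{\mathfrak{B}[0,1]},\mathfrak{L}\bigr)$.

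\textbf{Tightness.} The functional $[[\,\cdot\,]]$ is (implicitly) assumed to have relatively compact sub-level sets in $W^{-m,2}(Q_T;R^M)$; together with the bound $\prst\{[[\vc{U}_n]]>M\}<\ep$ this yields tightness of the laws of $\vc{U}_n$ in the Polish space $W^{-m,2}$, which is precisely the role played by such functionals in the compressible stochastic setting, cf. \cite[Chapter 2]{BFHbook}. The uniform bound on $\|\vc{U}_n\|_{L^q}$ gives tightness on bounded balls of $L^q(Q_T;R^M)$ equipped with the weak topology, which is metrizable and separable on such balls. For every $g$ in a countable dense family $\mathcal{D}\subset C_c(R^M)$, the sequence $g(\vc{U}_n)$ is tight in bounded balls of $L^\infty(Q_T)$ with the weak-$*$ topology. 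Tightness of $\vc{U}_{0,n}$ in $Y_0$ is assumed, that of $W_n$ in $C([0,T];\mathfrak{U}_0)$ is classical.

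\textbf{Representation and passage to the limit.} Setting
\[
\Xi_n=\bigl(\vc{U}_{0,n},\;\vc{U}_n^{\mathrm{str}},\;\vc{U}_n^{\mathrm{weak}},\;(g(\vc{U}_n))_{g\in\mathcal{D}},\;W_n\bigr),
\]
where $\vc{U}_n^{\mathrm{str}}$ and $\vc{U}_n^{\mathrm{weak}}$ refer to $\vc{U}_n$ viewed in strong $W^{-m,2}$ and in weak $L^q$ respectively, the joint laws are tight on the countable product of sub-Polish factors. Jakubowski's theorem \cite{jakubow} produces a subsequence $n(j)$ and representatives $\tilde\Xi_{n(j)}$ on $\bigl([0,1],\overline{\mathfrak{B}[0,1]},\mathfrak{L}\bigr)$ with identical joint laws converging $\mathfrak{L}$-a.s. in the product topology. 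Equality of laws transfers the $L^q$ and $[[\,\cdot\,]]$ bounds to the representatives, and uniqueness of the limit in the distributional sense forces the strong $W^{-m,2}$ limit and the weak $L^q$ limit to coincide, defining $\vc{U}\in L^q(Q_T;R^M)$. A density argument in $g$, based on the uniform bound $\|g(\vc{U}_n)\|_{L^\infty}\leq\|g\|_{\infty}$, extends the weak-$*$ identification from $\mathcal{D}$ to every $g\in C_c(R^M)$, defining $\overline{g(\vc{U})}$. That each $\tilde W_{n(j)}$ is a cylindrical Wiener process follows from equality of laws, and $W$ inherits this property by the standard L\'{e}vy characterisation.

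\textbf{The case $q>1$.} For $f$ with $|f(\vc{v})|\leq c(1+|\vc{v}|^s)$, $s<q$, the uniform $L^{q/s}$-bound on $f(\tilde{\vc{U}}_{n(j)})$ gives uniform integrability, so Vitali's theorem, combined with the weak-$*$ identification already obtained on a dense subfamily, upgrades the convergence to weak convergence in $L^r(Q_T)$ with $r=q/s>1$; choosing $f(\vc{v})=\vc{v}$ gives $\tilde{\vc{U}}_{n(j)}\rightharpoonup\vc{U}$ in $L^q$. The principal technical obstacle is that neither weak-$L^q$ nor the weak-$*$ $L^\infty$ topology is Polish, so the classical Skorokhod theorem is unavailable; it is precisely to circumvent this that one introduces the countable generating family $\mathcal{D}$ and invokes the sub-Polish extension of Jakubowski.
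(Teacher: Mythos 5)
The paper does not prove Theorem~\ref{RDT4}: it is quoted from \cite[Chapter 2.8]{BFHbook} and the text explicitly says ``we report the following result.'' So there is no in-paper argument to compare against, only the cited one. Your proposal is nevertheless the expected strategy---stack the relevant weak, weak-$*$, and strong topologies as countably many coordinates of a random variable in a quasi-Polish product, apply Jakubowski's representation theorem on $\bigl([0,1],\overline{\mathfrak{B}[0,1]},\mathfrak{L}\bigr)$, and upgrade to Young-measure convergence via a countable dense family $\mathcal{D}\subset C_c(R^M)$. That matches the reference in spirit and in structure.

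There is, however, one genuine misreading in your tightness step. You write that $[[\,\cdot\,]]$ is ``(implicitly) assumed to have relatively compact sub-level sets in $W^{-m,2}$'' and use this to deduce tightness of $\mathrm{law}(\vc{U}_n)$ in $W^{-m,2}$. This assumption is \emph{not} in the hypotheses: the theorem only requires $[[\,\cdot\,]]$ to be Borel measurable. The strong $W^{-m,2}$ tightness actually comes from the $L^q$ bound together with the compact embedding
\[
L^1(Q_T)\hookrightarrow\hookrightarrow W^{-m,2}(Q_T),\qquad m>\tfrac{N+1}{2},
\]
stated in Section~\ref{sec:prelimsstoch} (and Remark~\ref{Nrcom}): balls of $L^q\hookrightarrow L^1$ are precompact in $W^{-m,2}$, so $\prst\{\|\vc{U}_n\|_{L^q}>M\}<\ep$ alone yields tightness in $W^{-m,2}$. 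The functional $[[\,\cdot\,]]$ plays an entirely passive role: you adjoin the real-valued random variable $[[\vc{U}_n]]$ (equivalently, a random variable in the compact space $\overline{\mathbb{R}}$, where tightness is automatic) as one more coordinate, obtain a.s.\ convergence of its representative, and combine this with the equality of laws to pass the bound to the limit and read off $\sup_j[[\tilde{\vc{U}}_{n(j)}]]<\infty$; nothing about its sub-level sets is needed. A smaller imprecision: in the case $q>1$ you invoke Vitali's theorem, but you have no a.e.\ convergence of $f(\tilde{\vc{U}}_{n(j)})$. The correct mechanism is the standard Young-measure truncation argument: $\|f(\tilde{\vc{U}}_{n(j)})\|_{L^r}$ is bounded with $r=q/s>1$, hence $\{f(\tilde{\vc{U}}_{n(j)})\}$ is uniformly integrable; approximating $f$ by $T_N\circ f\in C_c$, the weak-$*$ identification on $\mathcal{D}$ applies to the truncations, and uniform integrability makes the truncation error small uniformly in $j$, which identifies the weak $L^r$ limit with $\overline{f(\vc{U})}$.
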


\begin{Remark} \label{Nrcom1}

Here and hereafter the symbol $\Ov{f(\vc{U})}$ denotes a weak $L^1-$limit of a sequence $(f(\vc{U}_n))_{n \geq 1}$. The existence of such a limit 
for any $f$ with appropriate growth implies the existence of a \emph{Young measure} $\{ \nu_{t,x} \}_{(t,x) \in Q_T}$ associated to the sequence 
$(f(\vc{U}_n))_{n \geq 1}$, In particular, 
\[
\left< \nu_{t,x} ; f \right> = \Ov{f(\vc{U})}(t,x) \ \mbox{for a.a.}\ (t,x) \in Q_T,\ f \in C_c(R^M),
\] 
cf. Pedregal \cite{Pe}.

\end{Remark}

\begin{Remark} \label{Nrcom}

Note that $L^1(Q_T) \hookrightarrow W^{-m,2}(Q_T)$ as soon as $m > (N+1)/2$. In applications, $\vc{U}$ is a vector of random variables (unknowns) like
$(\vr, \vt, \vu, \Grad \vt, \Grad \vu)$ and $[[ \cdot ]]$ represents the sum of available uniform bounds, like
\[
[[ (\vr, \vt, \vu, \Grad \vt, \Grad \vu) ]] = \| \vr \|_{L^\infty(0,T; L^\beta)} + \|\Grad \vt \|_{L^2} + \|\Grad \vu \|_{L^2} \dots
\]

\end{Remark}

{In addition, we need an abstract result on changing law in a system of stochastic PDE's. Specifically, evoking the situation considered
in \cite[Chapter 2.9]{BFHbook}, we consider an abstract stochastic PDE}
\[
D(\vc U) (\tau) - D_0 + \int_0^\tau \Div \vc{F}(\vc U) \dt = \int_0^\tau \vc{G}(\vc U) \, \D W,
\]
or, in the weak form,
\begin{align} \label{RD8}
\begin{aligned}
\int_0^T &\left[ \partial_t \psi \left< D(\vc U),\varphi \right> + \psi \left< \vc{F}(\vc U),\Grad\varphi \right> \right] \dt\\&\qquad
+ \int_0^T \psi \left< \vc{G}(\vc U),\varphi \right>  \D W + \psi(0) \left< D_0,\varphi \right> = 0
\end{aligned}
\end{align}
for any $\varphi \in \DC(Q)$, $\psi \in C^\infty_c[0,T)$, where
\begin{align*}
D = D(x, \vc U),\ \vc{F} = \vc{F}(x,\vc U),\ \vc{G}_k = \vc{G}_k (x,\vc U), \ x \in \tor,\ \vc U \in \R^M,
\end{align*}
are nonlinear superposition operators given by Carath\'eodory functions.
{We report the following result \cite[Chapter 2, Theorem 2.9.1]{BFHbook}:}

\begin{Theorem} \label{RDT3}

Let $\vc U \in L^1([0,T]; L^1(Q))$, $D_0 \in L^1(Q)$ be a random variables such that
\[
D(\vc U) , \, \vc{F}(\vc U) ,\, \vc{G}_k(\vc U) \in L^1([0,T]; L^1(Q))\ \pas,
\]
\[
\int_0^T \sum_{k=1}^\infty \left| \left< \vc{G}_k (\vc U),\varphi \right> \right|^2 {\rm d}t < \infty\ \pas
\ \mbox{for any}\ \varphi \in \DC (Q).
\]
\\
Let
$W = \left( W_k \right)_{k \geq 0}$ be a cylindrical Wiener process. Suppose that
the filtration
\[
\mathfrak{F}_t = \sigma \big( \sigma_t[\vc U] \cup \cup_{k=1}^\infty \sigma_t [W_k] \big),\ t \geq 0,
\]
is non-anticipative with respect to $W$.
Let $\tilde{\vc U}$, $\tilde{D}_0$ be another pair of random variables and $\tilde{W}$ another stochastic process such their joint laws coincide, namely,
\[
[D_0, \vc U, W] \sim [\tilde{D}_0, \tilde{\vc U}, \tilde{W} ].
\]
\\
Then $\tilde W$ is a cylindrical Wiener process, the filtration
\[
\tilde{\mathfrak{F}}_t = \sigma \left( \sigma_t[\tilde{\vc U}] \cup \cup_{k=1}^\infty \sigma_t [\tilde{W}_k] \right) ,\ t \geq 0,
\]
is non-anticipative with respect to $\tilde{W}$, and
\[
\begin{split}
&\left[ \int_0^T \left[ \partial_t \psi \left< D(\vc U),\varphi \right> + \psi \left< \vc{F}(\vc U),\Grad\varphi \right> \right] \dt
\right.\\
&\quad\quad \left.+ \int_0^T \psi \left< \vc{G}(\vc U),\varphi \right>  \D W + \psi(0) \left< D_0,\varphi \right> \right] \sim  \left[ \int_0^T \left[ \partial_t \psi \left< D(\tilde{\vc U}),\varphi \right> + \psi \left< \vc{F}(\tilde{\vc U}),\Grad\varphi \right> \right] \dt\right.
\\
&\quad\quad \left.+ \int_0^T \psi \left< \vc{G}(\tilde{\vc U});\varphi \right>  \D \tilde{W} + \psi(0) \left< \tilde{D}_0),\varphi \right> \right]
\ \mbox{(equivalence in law in $R$)}
\end{split}
\]
for any deterministic $\psi \in \DC([0,T))$, $\phi \in C^\infty(\tor)$.

\end{Theorem}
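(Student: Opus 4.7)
\textbf{Proof strategy for Theorem \ref{RDT3}.} The assertion has three layers: (i) the structural statements that $\tilde W$ is a cylindrical Wiener process and that $\tilde{\mathfrak F}_t$ is non-anticipative; (ii) the equivalence in law of the deterministic terms in \eqref{RD8}; (iii) the equivalence in law of the stochastic integral. The first two are essentially formal, while (iii) is the real content.

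For (i), a cylindrical process is Wiener precisely when its finite-dimensional marginals are mean-zero Gaussian with the prescribed covariance $\langle W(t)-W(s), e_k\rangle\langle W(t)-W(s), e_\ell\rangle$ of expectation $(t-s)\delta_{k\ell}$ and increments over disjoint intervals are independent. All these properties transfer under equality in law, so $\tilde W$ is cylindrical Wiener on $\mathfrak U$. For the non-anticipativity of $\tilde{\mathfrak F}_t$ with respect to $\tilde W$, one has to show that for any $s \le t$ and any bounded continuous $\Phi$, $\Psi$,
\[
\tilde{\mathbb{E}}\bigl[\Phi(\tilde W(u)-\tilde W(t),\,u \ge t)\,\Psi(\tilde{\vc U}|_{[0,t]},\,\tilde W|_{[0,t]})\bigr]
= \tilde{\mathbb{E}}\bigl[\Phi\bigr]\,\tilde{\mathbb{E}}\bigl[\Psi\bigr];
\]
this is a statement about the joint law of $(\tilde{\vc U},\tilde W)$, so it is inherited from the identical identity on the original space.

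For (ii), each of the deterministic Bochner integrals appearing in \eqref{RD8}, viewed as functions of $\vc U$ and $D_0$, is a Borel measurable map from $L^1((0,T);L^1(Q))\times L^1(Q)$ into $\mathbb R$: Carath\'eodory superposition operators such as $\vc U \mapsto D(\vc U)$, $\vc U \mapsto \vc F(\vc U)$ are continuous in the natural $L^p$ topology under the assumptions of the theorem. Consequently the random variables
\[
\int_0^T \partial_t\psi \,\langle D(\vc U),\varphi\rangle \dt, \qquad
\int_0^T \psi \,\langle \vc F(\vc U), \nabla\varphi\rangle \dt, \qquad
\psi(0)\langle D_0,\varphi\rangle
\]
are Borel functionals of $(D_0,\vc U)$; hence their joint law with $(D_0,\vc U,W)$ is determined by the law of $(D_0,\vc U,W)$ alone and transfers verbatim to the tilde variables.

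For (iii), set $\mathcal G_k(t,\omega) = \psi(t)\langle \vc G_k(\vc U(t,\omega)),\varphi\rangle$; by hypothesis $\mathcal G = (\mathcal G_k)_{k\geq 1}$ is $(\mathfrak F_t)$-progressively measurable and square integrable $\prst$-a.s. Choose a dyadic partition $0=t_0^n<\dots<t_n^n=T$ and define the piecewise constant approximation
\[
\mathcal G_k^n(t) = \sum_{i} \bar{\mathcal G}_{k,i}\,\mathbf 1_{(t_i^n,t_{i+1}^n]}(t),
\qquad
\bar{\mathcal G}_{k,i} = \frac{1}{t_i^n-t_{i-1}^n}\int_{t_{i-1}^n}^{t_i^n}\mathcal G_k(s) \, \mathrm{d}s,
\]
so each $\bar{\mathcal G}_{k,i}$ is $\mathfrak F_{t_i^n}$-measurable and is a Borel functional of $\vc U$. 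Then
\[
\sum_{k,i} \bar{\mathcal G}_{k,i}\bigl(W_k(t_{i+1}^n)-W_k(t_i^n)\bigr)
\]
is a genuine Borel functional of $(\vc U,W)$, and by standard stochastic calculus it converges in probability to $\int_0^T\langle \vc G(\vc U),\varphi\rangle\,\mathrm dW$ as $n\to\infty$. The same construction on the tilde side produces Riemann sums which, by the equality of finite-dimensional distributions inherited from $(\vc U,W)\sim(\tilde{\vc U},\tilde W)$, have identical laws to their untilde counterparts for every $n$; extracting an a.s.-convergent subsequence shows that the tilde Riemann sums converge in $\tilde{\prst}$-probability to $\int_0^T\langle \vc G(\tilde{\vc U}),\varphi\rangle\,\mathrm d\tilde W$ (this requires $\tilde{\mathfrak F}_t$-adaptedness of the integrand, which follows from (i)). Since equivalence in law is preserved under convergence in probability, the two stochastic integrals have equal laws. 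Combining with (ii) yields the claimed equivalence in law of both sides of \eqref{RD8}.

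The principal obstacle, and the reason this theorem is not trivial, is that the It\^o integral is defined only $\prst$-a.s. and implicitly depends on the filtration, so it is \emph{not} a pointwise functional of the paths $(\vc U,W)$; the Riemann-sum device is precisely what bridges the filtration-dependent construction to the purely distributional hypothesis $(\vc U,W)\sim(\tilde{\vc U},\tilde W)$.
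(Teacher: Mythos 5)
The paper does not actually prove Theorem~\ref{RDT3}: it states it as a black-box result, citing \cite[Chapter~2, Theorem~2.9.1]{BFHbook}. Your argument is correct and is essentially the one used in that reference: transfer the deterministic terms directly as Borel functionals of $(D_0,\vc U)$, and bridge the filtration-dependent stochastic integral via piecewise-constant (time-averaged, predictable) approximants whose Riemann sums are genuine Borel functionals of $(\vc U, W)$ and hence distribution-preserving, then pass to the limit in probability on each side.

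One place to tighten: the hypothesis only gives $\int_0^T\sum_k|\langle \vc G_k(\vc U),\varphi\rangle|^2\,\dt<\infty$ $\pas$, not in expectation, so the $L^2$-convergence of Riemann sums must be preceded by localization with stopping times $\tau_M=\inf\{t:\int_0^t\sum_k|\mathcal G_k|^2\,\mathrm{d}s>M\}$; these stopping times are themselves Borel functionals of $\vc U$ and hence compatible with the law-transfer argument, and the final passage $M\to\infty$ uses $\tau_M\uparrow T$ $\pas$ together with convergence in probability. Also, the phrase ``extracting an a.s.-convergent subsequence'' on the tilde side is a slight detour: once you have established (as you do in part~(i)) that $\tilde W$ is an $(\tilde{\mathfrak F}_t)$-Wiener process and the integrand is $(\tilde{\mathfrak F}_t)$-progressively measurable, the tilde Riemann sums converge in $\tilde\prst$-probability to the tilde It\^o integral by the same standard construction, with no need for a subsequence; the identification of that limit \emph{as} the tilde It\^o integral is what uses the filtration structure, and deserves a sentence rather than a parenthesis. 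Neither point is a gap, just detail to be written out.
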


We apply Theorems \ref{RDT4}, \ref{RDT3} to the sequences $(\vc{U}_{0,R})$, $(\vc{U}_R)_{R > 0}$,
\[
\vc{U}_{0,R} = \Big[\vr_{0,R}, \vt_{0,R}, \vu_{0,R}],\
\vc{U}_R = \Big[\vr_R, \vt_R, \vu_R, \Grad \vu_r, \Grad \vr_R, \Grad \vt_R \Big],
\]
with the associated Wiener processes $W_R$, the existence of which is guaranteed by Theorem \ref{wWP1}.
Here, the initial data $(\vr_{0,R}, \vt_{0,R}, \vu_{0,R})$ are considered in the
space $Y_0 \in C^{2 + \nu} \times W^{1,2} \cap C \times H_m$. The functional
$[[ \cdot ]]$ is taken as the sum of all norm appearing in the estimates (\ref{m142*}), (\ref{wWS28}), (\ref{Nm142*}), and
(\ref{wWS224}) together with the norm of the initial data, specifically,
\[
\begin{split}
[[ \vc{U}_R ]] &= \sup_{t \in (0,T)} \intQ{ \Big( \frac{1}{2}
\varrho_R |\vc u_R|^2 + H_{\delta,\Theta}(\varrho_R, \vartheta_R) +
{\delta}(\frac{ \varrho^{\beta}_R}{\beta - 1} + \varrho^2_R) \Big)
 } + \dots \\
&+  \| \vr_R \vu_R \|_{C^{s}([0,T]; W^{-k,2}(Q))}.
\end{split}
\]
\\
In accordance with Theorems \ref{RDT4} and \ref{RDT3},
we obtain a new family of random variables $(\tilde{\vr}_R, \tilde{\vt}_R, \tilde{\vu}_R)$, together with the Wiener processes
$\tilde{W}_R$, defined on the standard standard probability space $([0,1], \Ov{\mathfrak{B}[0,1]}, \mathfrak{L})$, with the associated
right-continuous complete filtration  $(\tilde{\mathfrak{F}}^R_t)_{t \geq 0}$ such that:
\begin{itemize}
\item
$\left[ \left( [0,1], \Ov{\mathfrak{B}[0,1]}, (\tilde{\mathfrak{F}}^M_t)_{t \geq 0} ,  \mathfrak{L},\right), \tilde{\vr}_R, \tilde{\vt}_R, \tilde{\vu}_R,
\tilde{W}_R \right]$
is a weak martingale solution of problem \eqref{wW}, \eqref{BCba} in the sense of Definition \ref{WD1}, with the initial law $\Lambda_R$
specified in Section \ref{wWS2S2}. In addition, the triple
$[\tilde{\vr}_R, \tilde{\vt}_R, \tilde{\vu}_R]$ satisfies the total energy balance \eqref{EI20a0} and the entropy equation \eqref{apeneq};
\item
in accordance with (\ref{nGL1}), the initial data satisfy
\[
\begin{split}
\tilde{\vr}_{0,R} &\to \tilde{\vr}_0 \ \mbox{in} \ C^{2 + \nu}(\Ov{Q}),
\ \tilde{\vt}_{0,R} \to \tilde{\vt}_0 \ \mbox{in}\ W^{1,2} \cap C(\Ov{Q}), \ {\vr}_0, {\vt}_0 > 0,\\
\tilde{\vu}_{0,R} &\to \tilde{\vu}_0 \ \mbox{in}\ H_m \ \mathfrak{L}-\mbox{a.s.};
\end{split}
\]
\item the functionals
bounded in expectations in (\ref{m142*}), (\ref{wWS28}), (\ref{Nm142*})
and (\ref{wWS224}) are bounded for
$(\tilde{\vr}_R, \tilde{\vt}_R, \tilde{\vu}_R)_{R > 0}$
uniformly for $R \to \infty$ $\mathfrak{L}-$a.s.;
\item we have
\begin{equation} \label{wWS229}
\begin{split}
\tilde{\vr}_R &\to \tilde{\vr} \ \mbox{weakly in}\ W^{1,p}(0,T; L^p(Q)) \cap L^p (0,T; W^{2,p}(Q)) \ \mbox{for some}\ p > 1 \ \mathfrak{L}-\mbox{a.s.},\\
\tilde{\vt}_R &\to \tilde\vt \ \mbox{weakly in}\ L^2(0,T; W^{1,2}(Q)) \ \mbox{and weakly-(*) in} \
L^\infty (0,T; L^4(Q)) \ \mathfrak{L}-\mbox{a.s.},\\
\tilde{\vu}_R &\to \tilde\vu \ \mbox{weakly-(*) in}\ L^2(0,T; W^{2, \infty }(Q;R^3))\ \mathfrak{L}-\mbox{a.s.},\\
\tilde{\vr}_R \tilde{\vu}_R &\to \tilde\vr \tilde\vu \ \mbox{in}\ C([0,T]; W^{-k,2}(Q; R^3))\ \mathfrak{L}-\mbox{a.s.}, \ k > \frac{3}{2};
\end{split}
\end{equation}
\item
the sequence $\left( \tilde{\vr}_R, \tilde{\vt}_R, \tilde{\vu}_R, \Grad \tilde{\vu}_r, \Grad \tilde{\vr}_R, \Grad \tilde{\vt}_R \right)_{R > 0}$
generates a Young measure;
\item we have
\[
\tilde{W}_R \to \tilde{W} \ \mbox{in}\ C([0,T]; \mathfrak{U}_0 )\ \mathfrak L \mbox{-a.s.}
\]

\end{itemize}

Now, we observe that (\ref{wWS229}) yields also \emph{strong} (in $L^1$) convergence of $(\tilde{\vr}_R)_{R > 0}$. As for the velocity,
we have,
\[
\Div \tilde{\vu}_R \ \mbox{bounded in}\ L^1(0,T; L^\infty(\tor) ) \ \mbox{uniformly in}\ R \ \mathfrak{L}-\mbox{a.s.}
\]
Consequently, evoking the well known estimate for the parabolic equation (\ref{wW3})
\[
\tilde{\vr}_R (\tau,x) \geq \min_{\tor} \tilde{\vr}_0 \exp \left(- \int_0^\tau \| \Div \tilde{\vu}_R \|_{L^\infty_x} \dt \right)
\]
we deduce that $\tilde{\vr}_R$ is bounded below away from zero in terms of the initial data. Consequently, relation (\ref{wWS229}) implies
\[
\tilde{\vu}_R \to \vu \ \mbox{in}\ C([0,T]; W^{-k,2}(Q; R^3));
\]
whence, in view of equivalence of norm on $H_m$,
\begin{equation} \label{NWS229}
\tilde{\vu}_R \to \tilde \vu \ \mbox{in}\ C([0,T]; W^{2,\infty}(Q; R^3)) \ \mathfrak{L}-\mbox{a.s.}
\end{equation}
\\
The strong convergence of the temperature,
\begin{equation} \label{NWS229bis}
\tilde{\vt}_R \to \vt \ \mbox{in, say,}\ L^2((0,T) \times Q) \ \mathfrak{L}-\mbox{a.s.},
\end{equation}
can be deduced from the equation (\ref{wW4}), exactly as in the deterministic case, see \cite[Chapter 3, Section 3.5.3]{F}. Here the proof is particularly simple and may be carried over by means of a variant of Lions--Aubin lemma. We will give a detailed proof under more delicate circumstances in Section
\ref{sec:vanishingviscosity} below. \\
Finally,
to perform the limit in the stochastic integral, we use the following result proved in \cite[Chapter 2, Lemma 2.6.6]{BFHbook}.

\begin{Lemma} \label{RDL4}
Let $( \Omega, \mathfrak{F}, \prst )$
be a complete probability space, and $\ell \geq 0$. For $n\in\mn,$ let $ W_n$ be an $(\mathfrak{F}^n_t)$-cylindrical Wiener process and
let $ \vc{G}_n$ be an  $( \mathfrak{F}^n_t
)$-progressively measurable stochastic process such that $\bfG_n\in L^2(0,T; L_2(\mathfrak{U};W^{-\ell,2}(Q)))$ a.s.
Suppose that
\[
W_n \to W \ \mbox{in}\ C([0,T]; \mathfrak{U}_0) \ \mbox{in probability},
\]
\[
\vc{G}_n \to \vc{G} \ \mbox{in}\ L^2(0,T; L^2(\mathfrak{U}; W^{-\ell,2}(Q)) \  \mbox{in probability},
\]
where $W =\sum_{k=1}^\8 e_k W_k$.
Let $( \mathfrak{F}_t )_{t \geq 0}$ be the filtration given as
\[
\mathfrak{F}_t = \sigma \big( \cup_{k = 1}^ \infty \sigma_t[ \bfG e_k ] \cup \sigma_t [W_k] \big).
\]
\\
Then, after a possible change on a set of zero measure in $\Omega \times (0,T)$,  $\vc{G}$ is
$( \mathfrak{F}_t )$-progressively measurable, and
\[
\int_0^\cdot \vc{G}_n \, \D W_n \to \int_0^\cdot \vc{G} \, \D W \ \mbox{in}\ L^2(0,T; W^{-\ell,2}(\TN)) \ \mbox{in probability.}
\]

\end{Lemma}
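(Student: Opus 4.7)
The plan is to combine a subsequence principle with approximation by simple processes and a Burkholder-Davis-Gundy stopping-time estimate.

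First I would establish the progressive measurability of $\vc G$ by extracting a subsequence along which $\vc G_n \to \vc G$ in $L^2(0,T; L_2(\mathfrak U; W^{-\ell,2}(Q)))$ almost surely, hence $(\mathrm{d}t\otimes\mathrm{d}\prst)$-a.e. The limit $\vc G e_k$ is then $(\mathcal F_t)$-adapted by the very definition of $\mathcal F_t$, and jointly measurable in $(t,\omega)$ with values in a separable Banach space, hence admits an $(\mathcal F_t)$-progressively measurable modification equal to $\vc G$ outside a null set. For the convergence of the stochastic integrals, since convergence in probability in the Polish space $L^2(0,T; W^{-\ell,2}(Q))$ is metrizable, it suffices by the Urysohn subsequence principle to show that from every subsequence we can extract a further one along which $\int_0^\cdot \vc G_n\,\dd W_n \to \int_0^\cdot \vc G\,\dd W$ almost surely. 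Pass then to a common a.s.-convergent subsequence with $W_n\to W$ in $C([0,T];\mathfrak U_0)$ and $\vc G_n\to\vc G$ in $L^2(0,T; L_2(\mathfrak U; W^{-\ell,2}(Q)))$.

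Next I would approximate $\vc G$ by simple processes $\vc G^\sigma = \sum_j H_j \mathbf 1_{(t_j, t_{j+1}]}$ with $H_j$ being $(\mathcal F_{t_j})$-measurable, bounded, and finite-rank in $L_2(\mathfrak U; W^{-\ell,2}(Q))$, so that $\|\vc G - \vc G^\sigma\|_{L^2(0,T; L_2)}$ is arbitrarily small in probability. Decompose
\[
\int_0^\cdot \vc G_n\,\dd W_n - \int_0^\cdot \vc G\,\dd W = \underbrace{\int_0^\cdot (\vc G_n - \vc G^\sigma)\,\dd W_n}_{(I)} + \underbrace{\int_0^\cdot \vc G^\sigma\,\dd(W_n - W)}_{(II)} + \underbrace{\int_0^\cdot (\vc G^\sigma - \vc G)\,\dd W}_{(III)}.
\]
The middle term $(II)$ is the finite sum $\sum_j H_j[(W_n - W)(t_{j+1}\wedge\cdot) - (W_n - W)(t_j\wedge\cdot)]$, which tends to $0$ uniformly on $[0,T]$ via the Hilbert-Schmidt embedding $\mathfrak U\hookrightarrow\mathfrak U_0$ and the a.s.\ uniform convergence of $W_n$.

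Terms $(I)$ and $(III)$ are controlled by the standard stopping-time and BDG bound: localizing at $\tau^\kappa_n = \inf\{t\colon \int_0^t \|\vc G_n - \vc G^\sigma\|_{L_2}^2\,\dd s\geq\kappa\}$ and applying It\^o's isometry on the stopped integral yields, for any $\eta,\kappa>0$,
\[
\prst\biggl( \sup_{t\in[0,T]}\Bigl\| \int_0^t (\vc G_n - \vc G^\sigma)\,\dd W_n\Bigr\|_{W^{-\ell,2}} > \eta \biggr) \leq \prst\bigl(\|\vc G_n - \vc G^\sigma\|_{L^2(0,T; L_2)}^2 > \kappa\bigr) + \frac{C\kappa}{\eta^2},
\]
together with an identical bound for $(III)$. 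Given $\varepsilon>0$, I would first fix $\sigma$ making $\|\vc G - \vc G^\sigma\|_{L^2}$ small in probability, then $\kappa$ small, then $n$ large, so that each of the three contributions is $<\varepsilon/3$. The main subtle point is that $\vc G^\sigma$ is only $(\mathcal F_t)$-progressively measurable, whereas $\int\cdot\,\dd W_n$ formally requires $(\mathcal F^n_t)$-adaptedness; this is resolved because in the concrete Skorokhod-lift context where Lemma~\ref{RDL4} is applied one has $\mathcal F_t\subseteq\mathcal F^n_t$, and in any case the $(II)$-integral is a finite linear combination of Wiener increments whose meaning is unambiguous.
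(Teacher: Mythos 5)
The paper does not prove Lemma \ref{RDL4}; it is quoted directly from \cite[Chapter~2, Lemma~2.6.6]{BFHbook}, a result in the Bensoussan / Debussche--Glatt-Holtz--Temam tradition. Your overall strategy (Urysohn subsequence principle, step-process approximation, BDG with a stopping time) is the correct one, but there is a genuine gap in the decomposition --- precisely the ``subtle point'' you flag, which your remark does not in fact resolve. The integrand in $(I)=\int_0^\cdot(\vc{G}_n-\vc{G}^\sigma)\,\dd W_n$ must be $(\mathfrak{F}^n_t)$-progressively measurable for the integral against the $(\mathfrak{F}^n_t)$-Wiener process $W_n$ to be defined, but $\vc{G}^\sigma$ is built from $\vc{G}$ and is only $(\mathfrak{F}_t)$-adapted. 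The Lemma asserts no relationship between the two filtrations, and the claimed inclusion $\mathfrak{F}_t\subseteq\mathfrak{F}^n_t$ in the Skorokhod-lift context is false: $\mathfrak{F}_t$ is generated by the \emph{limit} processes $\vc{G}$ and $W$, which are not $\mathfrak{F}^n_t$-measurable for a single $n$. Consequently your $\tau^\kappa_n$ is not an $(\mathfrak{F}^n_t)$-stopping time and the stopping-time/BDG bound for $(I)$ is not available. The observation about ``a finite linear combination of Wiener increments'' applies to $(II)$, not to $(I)$, and so does not touch the problem.

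The standard fix is to introduce \emph{two} families of step approximations built by the same time-averaging scheme: $\vc{G}_n^\sigma$ from $\vc{G}_n$ (hence $(\mathfrak{F}^n_t)$-adapted), and $\vc{G}^\sigma$ from $\vc{G}$ (hence $(\mathfrak{F}_t)$-adapted), and decompose as
\[
\int_0^\cdot(\vc{G}_n-\vc{G}_n^\sigma)\,\dd W_n
+\left[\int_0^\cdot \vc{G}_n^\sigma\,\dd W_n-\int_0^\cdot\vc{G}^\sigma\,\dd W\right]
+\int_0^\cdot(\vc{G}^\sigma-\vc{G})\,\dd W.
\]
The first and third integrals are now genuine stochastic integrals adapted to the correct filtrations and are controlled exactly by your BDG / stopping-time estimate; the middle term is a finite combination of Wiener increments whose coefficients satisfy $\vc{G}_n^\sigma(t_j)\to\vc{G}^\sigma(t_j)$ a.s.\ (from $\vc{G}_n\to\vc{G}$ a.s.\ in $L^2_t$), so it vanishes using $W_n\to W$ a.s.\ uniformly. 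You additionally need the step-approximation error $\|\vc{G}_n-\vc{G}_n^\sigma\|_{L^2(0,T;L_2(\mathfrak{U};W^{-\ell,2}))}$ to be small in probability \emph{uniformly in $n$} as $\sigma\to0$, which follows from the a.s.\ convergence $\vc{G}_n\to\vc{G}$ and the $L^2$-continuity of translations, and should be stated rather than elided. Your measurability argument for $\vc{G}$ and the use of the subsequence principle are fine as written.
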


With Lemma \ref{RDL4} and the compactness stated in (\ref{wWS229}), (\ref{NWS229}), and (\ref{NWS229bis}) at hand, it is not difficult to pass to the
limit in the equations (\ref{wW3}--\ref{m119}) to obtain the following system:
\begin{subequations}\label{NwW}
\begin{align} \label{NwW3}
\D \vr &+ \Div (\vr \vu )\ \dt  = \ep \Del \vr \ \dt, \ \Grad \vr \cdot \vc{n}|_{\partial Q} = 0 ,\\
\D \Pi_m [\vr \vu]
&+ \Pi_m [\Div (\vr \vu \otimes \vu) ] \dt
+  \Pi_m \Big[ \Grad \left(  p (\vr,\vt) + \delta ({\vr^2} + \vr^\beta) \right) \Big] \dt\nonumber\\
&= \Pi_m \Big[\ep \Del (\vr \vu) +  \Div \mathbb{S} (\Grad \vu) + { \frac{1}{m}  \vu } \Big] \dt \nonumber\\&+
\Pi_m \left[ \vr \Pi_m \Big[ \vc{F}_{\ep, \xi}(\vr, \vt, \vc{u} )  \Big]  \right] \ \D \W\label{NwW4}\\
\nonumber
\dd(\varrho e_\delta(\varrho, \vartheta)) &+
\big[\Div (\varrho e_\delta(\varrho, \vartheta) \vc u) - \Div ( \kappa_\delta (\vt) \Grad \vt ) \big]\dt \\
&\nonumber = \Big[\tens{S}(\vartheta,\Grad\vc u): \Grad \vc u -
p(\varrho, \vartheta) \Div \vc u \Big] \dt \\ &+ \Big[ \ep
\delta (\beta \varrho^{\beta - 2} + 2) |\Grad \varrho |^2  +
\delta \frac{1}{{\vartheta}^2} - \ep \vartheta^5 \Big]\dt + { \ep \vr |\Grad \vu|^2 \dt },\
\Grad \vt \cdot \vc{n}|_{\partial Q} = 0. \label{Nm119}
\end{align}
\end{subequations}
\\
Let us summarize the results obtained so far.
\begin{Theorem} \label{wWP1bis}
Let $\beta > 6$.
Let $\Lambda$ be a Borel probability measure  on $C^{2 + \nu}(\Ov{Q}) \times W^{1,2} \cap C (\Ov{Q}) \times H_m$ such that
\[
\begin{split}
\Lambda &\left\{ 0 < \underline{\vr} \leq \intQ{ \vr_0 } \leq \Ov{\vr},\ \Grad \vr_0 \cdot \vc{n}|_{\partial Q} = 0,\ \vr_0,\ \vt_0 > 0 \right\}
= 1,\\ &\int_{C^{2 + \nu} \times W^{1,2} \times H_m }\left[ \| \vr_0 \|_{C^{2 + \nu}_x}
 + \| \vt_0 \|_{W^{1,2}_x \cap C_x } + \| \vu_0 \|_{H_m} \right]^r {\rm d} \Lambda \leq c(r)
 \ \mbox{for any}\ r \geq 1.
\end{split}
\]
\\
Then
the approximate problem \eqref{NwW3}--\eqref{Nm119} admits a martingale solution in the sense of
Definition \ref{WD1} (with the obvious modifications for $\xi,\varepsilon,\delta>0$). In addition, the solution satisfies
\begin{align} \label{NEnB}
\begin{aligned}
- \int_0^T& \partial_t \psi
\bigg(\int_{Q} {\mathcal E_\delta(\varrho,\vartheta, \vu) } \dx\bigg) \dt
 +\int_0^T {\psi} \int_{Q} \Big( \ep \vartheta^5 + \frac{1}{m}|\vu|^2\Big)  \dt\\
= &\,\psi(0) \intQ{ {\mathcal E_\delta(\varrho_0,\vartheta_0, \vu_0 )} }+\int_0^T\int_{Q}\frac{\delta}{\vartheta^2}\psi\dx\dt\\
&+ \frac{1}{2} \int_0^T
\psi \bigg(
\intQ{ \sum_{k \geq 1} \varrho | \Pi_m[{\bf F}_{k,\ep, \xi} (\varrho,\vartheta, {\bf u})] |^2  } \bigg) {\rm d}t\\
&+\int_0^T  \psi\int_{Q}\varrho\Pi_m[{\vc{F}_{\ep, \xi} }(\varrho,\vartheta,\bfu)]\cdot\bfu\,\dd W\dx,
\end{aligned}
\end{align}
with
\[
\mathcal E_\delta(\varrho,\vartheta, \vu)= \frac{1}{2} \varrho | {\bf u} |^2 + \varrho e_{\delta}(\varrho,\vartheta)
+ \delta \left( \frac{\vr^\beta}{\beta - 1} + \vr^2 \right)
\]
for any deterministic smooth test function { $\psi \in \DC[0,T)$}.

\end{Theorem}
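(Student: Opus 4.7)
The plan is to construct solutions by first regularizing the initial law $\Lambda$ by a sequence $\Lambda_R$ satisfying the stronger hypotheses of Theorem~\ref{wWP1}, then invoking that theorem to obtain $R$-level martingale solutions $(\vr_R, \vt_R, \vu_R, W_R)$, and finally letting $R \to \infty$ via a stochastic compactness argument. The approximating laws are chosen so that $\Lambda_R \rightharpoonup^* \Lambda$ as in~\eqref{nGL1} while keeping all relevant moments of the initial data uniformly bounded. The cut-off level $R$ enters neither the dissipation structure nor the mollification, so all $R$-independent estimates should survive the limit.

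The first step is to collect bounds independent of $R$. Combining the total energy balance of Proposition~\ref{wWP2} with the entropy equation~\eqref{apeneq} produces the total dissipation balance~\eqref{wWS24}. Applying Burkholder--Davis--Gundy to control the stochastic term, absorbing the ``bad'' cross term $\ep\vartheta^{-2}\bigl(e_M + \vr\,\partial_\vr e_M\bigr)\Grad\vr\cdot\Grad\vt$ into dissipation via~\eqref{m140}, and closing by Gronwall yields~\eqref{m142*}, \eqref{Nm142*} and~\eqref{wWS28}; the coercivity~\eqref{Nhelm} of the ballistic free energy supplies $L^\infty_t L^4_x$ control of $\vt_R$. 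Parabolic maximal regularity applied to~\eqref{wW3} bootstraps to~\eqref{wWS221}--\eqref{wWS222}, and Kolmogorov's criterion combined with Burkholder--Davis--Gundy on the stochastic integral gives Hölder time regularity of $\vr_R\vu_R$ as in~\eqref{wWS224}; here the mollification parameter $\xi$ keeps $\Pi_m[\vc F_{k,\ep,\xi}]$ uniformly bounded in $L^\infty_x$ thanks to~\eqref{wW2a} and~\eqref{P-1new}.

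With uniform bounds in hand, the plan is to apply the Skorokhod--Jakubowski representation in the form of Theorem~\ref{RDT4} to
\[
\vc U_R = \bigl[\vr_R,\vt_R,\vu_R,\Grad\vu_R,\Grad\vr_R,\Grad\vt_R\bigr]
\]
together with the initial data and the Wiener processes, obtaining new variables $(\tilde\vr_R,\tilde\vt_R,\tilde\vu_R,\tilde W_R)$ on the standard probability space converging $\mathfrak L$-a.s.\ to $(\tilde\vr,\tilde\vt,\tilde\vu,\tilde W)$ as in~\eqref{wWS229}. Theorem~\ref{RDT3} transfers the $R$-level equations to the new space. The parabolic estimates on $\tilde\vr_R$ give strong $L^1$ convergence and, via the classical representation formula for~\eqref{wW3}, a uniform lower bound $\tilde\vr_R \geq \underline c > 0$. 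Because $\tilde\vu_R$ lives in the finite-dimensional $H_m$ where all norms are equivalent, the convergence of $\tilde\vr_R\tilde\vu_R$ in $C_tW^{-k,2}_x$ upgrades to $\tilde\vu_R\to\tilde\vu$ in $C_tW^{2,\infty}_x$, so in particular $\chi(\|\tilde\vu_R\|_{H_m}-R)\to 1$ pointwise once $R$ exceeds the a.s.\ finite value $\sup_t\|\tilde\vu\|_{H_m}$. The stochastic integral is then handled by Lemma~\ref{RDL4}, with strong convergence of $\tilde\vr_R\Pi_m[\vc F_{\ep,\xi}(\tilde\vr_R,\tilde\vt_R,\tilde\vu_R)]$ in $L^2_t L_2(\mathfrak U;L^2_x)$ following from Vitali's theorem and the uniform $L^\infty_x$ bound on the mollified coefficients. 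Passage to the limit in the deterministic terms is routine, and~\eqref{NEnB} is obtained by taking limits in~\eqref{EI20a0} using the same compactness.

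The main obstacle is strong compactness of $\tilde\vt_R$, which is needed to identify the nonlinearities $\kappa(\vt)$, $\mathbb S(\vt,\Grad\vu)$, $p(\vr,\vt)$ and $\vr|\Pi_m[\vc F_{k,\ep,\xi}]|^2$. Since~\eqref{Nm119} is deterministic once $(\tilde\vr_R,\tilde\vu_R)$ are frozen pathwise, I would obtain $L^2_{t,x}$ convergence of $\tilde\vt_R$ by a Lions--Aubin argument applied to $\tilde\vr_R e_\delta(\tilde\vr_R,\tilde\vt_R)$, using the parabolic bound on $\Delta\mathcal K_\delta(\tilde\vt_R)$ from~\eqref{m135} and the bound on $\partial_t\bigl(\tilde\vr_R e_\delta(\tilde\vr_R,\tilde\vt_R)\bigr)$ in a negative-order Sobolev space. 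The uniform positivity of $\tilde\vr_R$ is crucial at this step in order to invert the nonlinear map $\vt\mapsto \vr e_\delta(\vr,\vt)$ and recover strong convergence of $\tilde\vt_R$ itself; the more delicate variant of this argument that will be needed later, in the vanishing-viscosity step, is worked out in Section~\ref{sec:vanishingviscosity}.
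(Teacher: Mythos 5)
Your overall strategy mirrors the paper's: start from the $R$-level solutions of Theorem~\ref{wWP1} with regularized initial laws $\Lambda_R$, derive $R$-uniform bounds from the total dissipation balance \eqref{wWS24} via Gronwall, Burkholder--Davis--Gundy and the coercivity \eqref{Nhelm}, bootstrap parabolic regularity and Kolmogorov's criterion to time-compactness, pass to the standard probability space with Theorems~\ref{RDT4}/\ref{RDT3}, remove the cut-off since $\sup_R\sup_t\|\tilde\vu_R\|_{H_m}<\infty$ a.s., and identify the stochastic integral with Lemma~\ref{RDL4}. This is the right architecture and essentially what the paper does.

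There is, however, one genuine gap: the temperature compactness step leans on ``the parabolic bound on $\Delta\mathcal K_\delta(\tilde\vt_R)$ from~\eqref{m135}.'' The bound \eqref{m135} was obtained in Section~\ref{wWS1S2} and its constant is $c(m,R,\underline\vr,\Ov\vr,\underline\vt,\Ov\vt)$; it depends explicitly on $R$ (through the cut-off level in the data of the parabolic problem and the $L^\infty$ size of $[\vu]_{h,R}$), so it is of no use in the limit $R\to\infty$. The paper is explicit on this point: ``the available estimates are considerably weaker than those obtained in Section~\ref{wWS1S2}, making the choice of the appropriate path space more delicate.'' The only $R$-uniform bounds on the temperature at this stage are those coming from the entropy production in \eqref{m142*} (giving $\Grad\vt$, $\vt^{(\beta-2)/2}\Grad\vt$, $\ep\vt^5$ in $L^2$ or $L^1$) and the $L^\infty_t L^4_x$ bound \eqref{Nm142*}. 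Your Lions--Aubin argument on $\tilde\vr_R e_\delta(\tilde\vr_R,\tilde\vt_R)$ can in principle be closed using these weaker bounds -- the spatial regularity of $\vr e_\delta$ comes from \eqref{m142*} and the time derivative from equation \eqref{m119} with its right-hand side controlled in $L^1_{t,x}$ -- but you must replace the appeal to \eqref{m135}. The $L^2$-in-time bound on $\Delta\mathcal K_\delta(\vt)$ and $\partial_t\vt$ is something one re-derives for the \emph{limit} $\tilde\vt$ a posteriori (applying the deterministic parabolic theory of \cite[Lemma 3.3]{F} pathwise to the limit equation, using $\tilde\vu\in C([0,T];H_m)$ and $\tilde\vr\in C([0,T];C^{2+\nu})$), not a bound you may carry through the limit.
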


\subsection{Limit $m \to \infty$}

Keeping $\ep > 0$, $\delta > 0$ and $\xi > 0$ fixed, our next goal is to let $m \to \infty$ in the approximate system
(\ref{NwW}). With the same initial law $\Lambda$, this can be done in a similar way as in the preceding step modulo certain modifications
due to the lost of regularity of the velocity in the asymptotic limit. In particular, the bound (\ref{wWS28}) and related estimates
on the density are no longer valid
for $m \to \infty$.\\
At this stage, it is also convenient to replace the internal energy equation (\ref{Nm119}) by the entropy balance
\begin{equation} \label{NEntb}
\begin{split}
\D &(\varrho s_\delta(\varrho, \vartheta)) +
\Div (\varrho s_\delta (\varrho, \vartheta) \vc u) \dt - \Div \Big[
\Big( \frac{\kappa(\vartheta)}{\vartheta} + \delta (
\vartheta^{\beta - 1} + \frac{1}{\vartheta^2}) \Big) \Grad
\vartheta \Big] \dt  \\
&=
\frac{1}{\vartheta} \Big[ \tens{S}(\vt, \Grad \vu) : \Grad \vc u + \Big(
\frac{\kappa(\vartheta)}{\vartheta} + \delta ( \vartheta^{\beta -
1} + \frac{1}{\vartheta^2}) \Big) |\Grad \vartheta|^2 + \delta
\frac{1}{{\vartheta}^2} \Big] \dt
\\ &+ \left[ \frac{\ep \delta}{\vartheta} ( \beta \varrho^{\beta - 2} + 2)
|\Grad \varrho|^2
+\ep \frac{\Delta_x \varrho}{\vartheta} \Big( \vartheta
s_\delta(\varrho, \vartheta) - e_\delta(\varrho, \vartheta) -
\frac{p(\varrho, \vartheta)}{\varrho} \Big) \right] \dt \\
 &+\left[- \ep \vartheta^4 + \ep \frac{\vr}{\vt} |\Grad \vu|^2 \right] \dt
\end{split}
\end{equation}
Note carefully that this is possible as $\vt > 0$ and equation \eqref{Nm119} is satisfied a.a. Thus \eqref{NEntb} follows by dividing
\eqref{Nm119} by $\vt$.
\\
Now, exactly as in \cite[Chapter 3, Section 3.5]{F}, we deduce from (\ref{NEntb}) the inequality
\begin{equation} \label{NEntb+}
\begin{split}
\D &(\varrho s_\delta(\varrho, \vartheta)) +
\Div (\varrho s_\delta (\varrho, \vartheta) \vc u) \dt - \Div \Big[
\Big( \frac{\kappa(\vartheta)}{\vartheta} + \delta (
\vartheta^{\beta - 1} + \frac{1}{\vartheta^2}) \Big) \Grad
\vartheta \Big] \dt  \\
&- \ep \Div \left[ \left( \vt s_{M,\delta} (\vr, \vt) - e_{M, \delta} (\vr, \vt) - \frac{p_M (\vr, \vt) }{\vr} \right) \frac{ \Grad \vr}{\vt} \right] \dt \\
& \geq
\frac{1}{\vartheta} \Big[ \tens{S}(\vt, \Grad \vu) : \Grad \vc u + \Big(
\frac{\kappa(\vartheta)}{\vartheta} + \frac{\delta}{2} ( \vartheta^{\beta -
1} + \frac{1}{\vartheta^2}) \Big) |\Grad \vartheta|^2 + \delta
\frac{1}{{\vartheta}^2} \Big] \dt
\\ &+ \left[ \frac{\ep \delta}{2 \vartheta} ( \beta \varrho^{\beta - 2} + 2)
|\Grad \varrho|^2
+\ep \frac{1}{\vr \vt} \frac{\partial p_M}{\partial \vr} (\vr, \vt) |\Grad \vr|^2 - \ep \vartheta^4\right] \dt .
\end{split}
\end{equation}
\\
Consequently, adapting the dissipation inequality (\ref{wWS24}) to the present setting, we deduce that (\ref{wWS28}) must be replaced by
\[
\expe{ \int_0^T \frac{1}{\vt} \mathbb{S} (\vt, \Grad \vu) : \Grad \vu \dt }^r \leq c(r),
\]
which, combined with inequality (\ref{Poinc}), gives rise to
\begin{equation} \label{NwWS28}
\expe{ \int_0^T \| \vu \|^2_{W^{1,2}(Q; R^3)} \dt }^r \leq c(r).
\end{equation}
\\
At this stage, we choose the initial velocity $\vu_0 \in L^2(Q; R^3)$ and adjust the initial law $\Lambda$ to $\vu_0= \vu_{0,m} = \Pi_m \vu_0$.
Accordingly, Theorem \ref{wWP1bis} yields a family $(\vr_m, \vt_m, \vu_m)$ of approximate solutions.
Now, we may repeat step by step the arguments of the preceding part to obtain a martingale solution of the following approximate problem:
\begin{equation} \label{Napde1}
\D \vr + \Div (\vr \vu )\ \dt  = \ep \Del \vr \ \dt ,\ \Grad \vr \cdot \vc{n}|_{\partial Q} = 0,
\end{equation}
\begin{equation} \label{Napde2}
\begin{split}
\D (\vr \vu)
&+ \Div (\vr \vu \otimes \vu)  \dt
+  \Grad \left(  p (\vr,\vt) + \delta ({\vr^\beta} + \vr^2) \right) \dt \\
&= \Big[\ep \Del (\vr \vu) +  \Div \mathbb{S} (\Grad \vu)  \Big] \dt +
\vr \vc{F}_{\ep, \xi} (\vr, \vt, \vc{u} )  \ \D \W ,
\end{split}
\end{equation}
\begin{equation}
\label{Napde3}
\begin{split}
\D &(\varrho s_\delta(\varrho, \vartheta)) +
\Div (\varrho s_\delta (\varrho, \vartheta) \vc u) \dt - \Div \Big[
\Big( \frac{\kappa(\vartheta)}{\vartheta} + \delta (
\vartheta^{\beta - 1} + \frac{1}{\vartheta^2}) \Big) \Grad
\vartheta \Big] \dt  \\
&- \ep \Div \left[ \left( \vt s_{M,\delta} (\vr, \vt) - e_{M, \delta} (\vr, \vt) - \frac{p_M (\vr, \vt) }{\vr} \right) \frac{ \Grad \vr}{\vt} \right] \dt \\
& \geq
\frac{1}{\vartheta} \Big[ \tens{S}(\vt, \Grad \vu) : \Grad \vc u + \Big(
\frac{\kappa(\vartheta)}{\vartheta} + \frac{\delta}{2} ( \vartheta^{\beta -
1} + \frac{1}{\vartheta^2}) \Big) |\Grad \vartheta|^2 + \delta
\frac{1}{{\vartheta}^2} \Big] \dt
\\ &+ \left[ \frac{\ep \delta}{2 \vartheta} ( \beta \varrho^{\beta - 2} + 2)
|\Grad \varrho|^2
+\ep \frac{1}{\vr \vt} \frac{\partial p_M}{\partial \vr} (\vr, \vt) |\Grad \vr|^2 - \ep \vartheta^4\right] \dt, \ \Grad \vt \cdot \vc{n}|_{\partial Q} = 0,
\end{split}
\end{equation}
\begin{equation} \label{Napde4}
\begin{split}
&\D \intQ{\left[ \frac{1}{2} \varrho | {\bf u} |^2 + \varrho e_{\delta}(\varrho,\vartheta)
+ \delta \left( \frac{\vr^\beta}{\beta - 1} + \vr^2 \right) \right]}
+ \left( \int_{Q} \left(  \ep \vt^5 - \frac{\delta}{\vartheta^2} \right) \dx \right) \dt \\
&= \frac{1}{2}
\bigg(
\intQ{ \sum_{k \geq 1} \varrho | {\bf F}_{k,\ep, \xi} (\varrho,\vartheta, {\bf u})|^2  } \bigg) {\rm d}t
+ \left( \int_{Q}\varrho {\vc{F}_{\ep, \xi} }(\varrho,\vartheta,\bfu)\cdot\bfu \dx \right) \D \W.
\end{split}
\end{equation}
Note that the entropy inequality (\ref{Napde3}) as well as the total energy balance (\ref{Napde4}) must be already interpreted in the weak sense as in
Definition \ref{def:sol}. On the other hand, we still recover the strong (a.a. pointwise) convergence of the arguments in the diffusion coefficients
$\vc{F}_{k,\ep,\xi}$ to perform the limit in the stochastic integral.

\subsection{Limit $\xi \to 0$}

Our final goal is to perform the limit $\xi \to 0$. To this end, we choose the cut--off functions $h_\xi$ in \eqref{P-1New} to approach $1$
and the regularizing kernels $\omega_\xi$ to approach the Dirac mass. As the stochastic terms in \eqref{Napde2} and \eqref{Napde4}
do not contain the projection
$\Pi_m$,  we no longer need \eqref{wW2a}. Instead, we simply use
\[
\left\| \vc{F}_{k,\ep, \xi}( \vr, \vt, \vu) \right\|_{L^\infty_x} \aleq \left\| \vc{F}_{k,\ep} ( \vr, \vt, \vu) \right\|_{L^\infty_x}
\leq c(\ep) f_{k,\ep}
\]
in (\ref{wWS25}), (\ref{wWS26}) and all other terms involving the stochastic integral.
\\
Summarizing, we record the following result.

\begin{Theorem} \label{wWP1a}
Let $\beta > 6$.
Let $\Lambda$ be a Borel probability measure  on $C^{2 + \nu}(\Ov{Q})\times W^{1,2}\cap C(\Ov{Q}) \times L^2(Q;R^3)$ such that
\[
\begin{split}
\Lambda &\left\{ 0 < \underline{\vr} \leq \intQ{ \vr_0 } \leq \Ov{\vr},\ \Grad \vr_0 \cdot \vc{n}|_{\partial Q} = 0,\ \vr_0,\ \vt_0 > 0 \right\}
= 1,\\ &\int_{C^{2 + \nu} \times W^{1,2} \times L^2 }\left[ \| \vr_0 \|_{C^{2 + \nu}_x}
 + \| \vt_0 \|_{W^{1,2}_x \cap C_x } + \| \vu_0 \|_{L^2_x} \right]^r {\rm d} \Lambda \leq c(r)
 \ \mbox{for any}\ r \geq 1.
\end{split}
\]
\\
Then
the approximate problem
\begin{equation} \label{Npde1}
\D \vr + \Div (\vr \vu )\ \dt  = \ep \Del \vr \ \dt ,\ \Grad \vr \cdot \vc{n}|_{\partial Q} = 0,
\end{equation}
\begin{equation} \label{Npde2}
\begin{split}
\D (\vr \vu)
&+ \Div (\vr \vu \otimes \vu)  \dt
+  \Grad \left(  p (\vr,\vt) + \delta ({\vr^\beta} + \vr^2) \right) \dt \\
&= \Big[\ep \Del (\vr \vu) +  \Div \mathbb{S} (\Grad \vu)  \Big] \dt +
\vr \vc{F}_{\ep} (\vr, \vt, \vc{u} )  \ \D \W ,
\end{split}
\end{equation}
\begin{equation}
\label{Npde3}
\begin{split}
\D &(\varrho s_\delta(\varrho, \vartheta)) +
\Div (\varrho s_\delta (\varrho, \vartheta) \vc u) \dt - \Div \Big[
\Big( \frac{\kappa(\vartheta)}{\vartheta} + \delta (
\vartheta^{\beta - 1} + \frac{1}{\vartheta^2}) \Big) \Grad
\vartheta \Big] \dt  \\
&- \ep \Div \left[ \left( \vt s_{M,\delta} (\vr, \vt) - e_{M, \delta} (\vr, \vt) - \frac{p_M (\vr, \vt) }{\vr} \right) \frac{ \Grad \vr}{\vt} \right] \dt \\
& \geq
\frac{1}{\vartheta} \Big[ \tens{S}(\vt, \Grad \vu) : \Grad \vc u + \Big(
\frac{\kappa(\vartheta)}{\vartheta} + \frac{\delta}{2} ( \vartheta^{\beta -
1} + \frac{1}{\vartheta^2}) \Big) |\Grad \vartheta|^2 + \delta
\frac{1}{{\vartheta}^2} \Big] \dt
\\ &+ \left[ \frac{\ep \delta}{2 \vartheta} ( \beta \varrho^{\beta - 2} + 2)
|\Grad \varrho|^2
+\ep \frac{1}{\vr \vt} \frac{\partial p_M}{\partial \vr} (\vr, \vt) |\Grad \vr|^2 - \ep \vartheta^4\right] \dt, \ \Grad \vt \cdot \vc{n}|_{\partial Q} = 0,
\end{split}
\end{equation}
\begin{equation} \label{Npde4}
\begin{split}
&\D \intO{\left[ \frac{1}{2} \varrho | {\bf u} |^2 + \varrho e_{\delta}(\varrho,\vartheta)
+ \delta \left( \frac{\vr^\beta}{\beta - 1} + \vr^2 \right) \right]}
+ \left( \int_{\mt} \left(  \ep \vt^5 - \frac{\delta}{\vartheta^2} \right) \dx \right) \dt \\
&= \frac{1}{2}
\bigg(
\intQ{ \sum_{k \geq 1} \varrho | {\bf F}_{k,\ep} (\varrho,\vartheta, {\bf u})|^2  } \bigg) {\rm d}t
+ \left( \int_{\mt}\varrho {\vc{F}_{\ep} }(\varrho,\vartheta,\bfu)\cdot\bfu \dx \right) \D \W.
\end{split}
\end{equation}
admits a martingale solution in the sense of
Definition \ref{def:sol} (with the obvious modification for $\varepsilon,\delta>0$).

\end{Theorem}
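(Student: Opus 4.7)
The plan is to apply the same three-step compactness strategy used in the previous two subsections (limit $R\to\infty$ and limit $m\to\infty$): establish uniform-in-$\xi$ bounds, invoke the Skorokhod--Jakubowski representation to move to a new probability space on which strong (a.s.) convergence holds in appropriate topologies, and then pass to the limit in the weak formulation of \eqref{Napde1}--\eqref{Napde4}. The main structural simplification in the present step is that the stochastic forcing in \eqref{Napde2}, \eqref{Napde4} no longer involves the Galerkin projection $\Pi_m$, so the auxiliary factor $c(\xi)$ arising from \eqref{wW2a} disappears; using only the pointwise bound $\|\vc{F}_{k,\ep,\xi}(\vr,\vt,\vu)\|_{L^\infty_x}\aleq \|\vc{F}_{k,\ep}(\vr,\vt,\vu)\|_{L^\infty_x}\aleq c(\ep) f_{k,\ep}$ together with the mass conservation \eqref{eq:2610}, all estimates \eqref{m142*}, \eqref{Nm142*}, \eqref{NwWS28}, as well as the $\xi$-independent version of the Kolmogorov-type bound \eqref{wWS224}, hold with constants depending only on $\ep$, $\delta$, and the initial law $\Lambda$.

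With these bounds, I would apply Theorem \ref{RDT4} to the sequence of random variables $\vc{U}_\xi=(\vr_\xi,\vt_\xi,\vu_\xi,\Grad\vr_\xi,\Grad\vt_\xi,\Grad\vu_\xi)$ together with the initial data and the Wiener processes $W_\xi$, and Theorem \ref{RDT3} to transfer the validity of \eqref{Napde1}--\eqref{Napde4} to the new stochastic basis. Writing $(\tilde\vr_\xi,\tilde\vt_\xi,\tilde\vu_\xi,\tilde W_\xi)$ for the new sequence, the parabolic maximal-regularity chain \eqref{wWS220}--\eqref{wWS222} applied to the (deterministic, pathwise) equation \eqref{Npde1} together with the $\xi$-independent bound on $\tilde\vr_\xi\tilde\vu_\xi$ yields strong convergence of $\tilde\vr_\xi$ in $L^p((0,T)\times Q)$, and an Aubin--Lions type argument combining \eqref{wWS224} with weak convergence of $\tilde\vu_\xi$ in $L^2(0,T;W^{1,2}_0)$ delivers strong convergence of $\tilde\vr_\xi\tilde\vu_\xi$ in $C_w([0,T];L^{5/4})$ and hence a.e.\ convergence of $\tilde\vu_\xi$ on the set $\{\tilde\vr>0\}$. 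The strong convergence of $\tilde\vt_\xi$ in $L^2((0,T)\times Q)$ follows from the entropy inequality \eqref{Npde3} by the same reasoning that will be worked out in detail in Section~\ref{sec:vanishingviscosity}.

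Having secured a.s.\ strong convergence of the triple $(\tilde\vr_\xi,\tilde\vt_\xi,\tilde\vu_\xi)$, the mollification--truncation structure \eqref{P-1New} implies $\vc{F}_{k,\ep,\xi}(\cdot,\tilde\vr_\xi,\tilde\vt_\xi,\tilde\vu_\xi)\to \vc{F}_{k,\ep}(\cdot,\tilde\vr,\tilde\vt,\tilde\vu)$ in $L^2(0,T;L^2(Q;R^3))$ $\tilde\prst$-a.s., whereupon Lemma \ref{RDL4} identifies the limit of the stochastic integrals in \eqref{Npde2} and in the energy balance \eqref{Npde4}. The deterministic nonlinearities (convection, pressure $p+\delta(\vr^\beta+\vr^2)$, viscous stress, internal-energy and entropy fluxes) pass to the limit by Vitali's theorem using the uniform moment estimates, and the entropy inequality \eqref{Npde3} survives in the limit by lower semicontinuity applied to the quadratic dissipation terms. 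Positivity $\tilde\vr>0$, $\tilde\vt>0$ $\tilde\prst$-a.s. is preserved from the approximations.

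The principal obstacle, as emphasised in the introduction, is that one cannot first take pointwise limits in the diffusion coefficients on the original probability space and then switch probability spaces, because the dependence of $\vc{F}_{k,\ep}$ on $\vt$ (and on $\vu$) is genuinely nonlinear; consequently the Skorokhod change of probability space must precede the identification of the limit of $\vc{F}_{k,\ep,\xi}(\vr_\xi,\vt_\xi,\vu_\xi)$. This forces me to rely on the strong (a.s.)\ convergence of $\tilde\vt_\xi$ extracted from the entropy balance rather than any martingale-type identification argument used in \cite{BrHo}. Once this temperature compactness is in place the remainder of the passage to the limit is routine, and Theorem \ref{wWP1a} follows.
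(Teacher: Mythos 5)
Your proposal takes essentially the same route as the paper: the paper's own treatment of Theorem~\ref{wWP1a} is a short remark observing that, since $\Pi_m$ no longer acts on the noise, the bound $\|\vc{F}_{k,\ep,\xi}(\vr,\vt,\vu)\|_{L^\infty_x}\aleq c(\ep)f_{k,\ep}$ replaces the $\xi$-dependent estimate via \eqref{wW2a} in \eqref{wWS25}, \eqref{wWS26}, after which the Skorokhod/RDT3/RDT4 compactness machinery and Lemma~\ref{RDL4} are reused verbatim from the preceding $R\to\infty$ and $m\to\infty$ steps. Your write-up simply unpacks what the paper leaves implicit (parabolic maximal regularity for $\vr$, entropy-based compactness of $\vt$, identification of $\vc{F}_{k,\ep,\xi}\to\vc{F}_{k,\ep}$ via the mollifier/cut-off structure and a.e.\ convergence of the state variables), all consistent with the paper's intent.
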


\section{The vanishing viscosity limit}
\label{sec:vanishingviscosity}

Our ultimate goal is to perform successively the asymptotic limits $\ep \to 0$ and $\delta \to 0$ in the approximate system
(\ref{Npde1}--\ref{Npde4}). It is worth noting that the relations (\ref{Napde1}) and (\ref{Napde3}) are \emph{deterministic} and the same as in
\cite[Sections 3.6 - 3.7]{F}, where a similar limit in the absence of stochastic forcing is performed. Thus, at least in (\ref{Napde1}), (\ref{Napde3}),
the limit process is exactly the same as in \cite[Chapter 3]{F} as long as suitable uniform bounds are established. Accordingly, we adopt the following strategy:

\begin{itemize}

\item Making use of an appropriate form of the total dissipation balance (\ref{wWS24}) we derive the energy bounds.

\item We derive all other estimates, in particular for the pressure and the velocity, that require stochastic averaging.

\item Changing the probability space we recover (weak) compactness pointwise with respect to the random parameter. Accordingly, we perform the limit passage,
in which the equation of continuity and the entropy balance are handled in the same way as in \cite[Chapter 3]{F}.

\item We pass to the limit in the stochastic integral using Lemma \ref{RDL4}.

\end{itemize}

We aim to perform the limit $\ep \to 0$ extending the validity of Theorem \ref{wWP1a} to the following system.
\begin{equation} \label{Napd1}
\D \vr + \Div (\vr \vu )\ \dt  = 0 ,
\end{equation}
\begin{equation} \label{Napd2}
\begin{split}
\D (\vr \vu)
+ \Div (\vr \vu \otimes \vu)  \dt
+  \Grad \left(  p (\vr,\vt) + \delta ({\vr^2} + \vr^\beta) \right) \dt
= \Div \mathbb{S} (\Grad \vu)   \dt +
\vr \vc{F}(\vr, \vt, \vc{u} )  \ \D \W ,
\end{split}
\end{equation}
\begin{equation}
\label{Napd3}
\begin{split}
\D &(\varrho s_\delta(\varrho, \vartheta)) +
\Div (\varrho s_\delta (\varrho, \vartheta) \vc u) \dt - \Div \Big[
\Big( \frac{\kappa(\vartheta)}{\vartheta} + \delta (
\vartheta^{\beta - 1} + \frac{1}{\vartheta^2}) \Big) \Grad
\vartheta \Big] \dt  \\
& \geq
\frac{1}{\vartheta} \Big[ \tens{S}(\vt, \Grad \vu) : \Grad \vc u + \Big(
\frac{\kappa(\vartheta)}{\vartheta} + \frac{\delta}{2} ( \vartheta^{\beta -
1} + \frac{1}{\vartheta^2}) \Big) |\Grad \vartheta|^2 + \delta
\frac{1}{{\vartheta}^2} \Big] \dt,
\end{split}
\end{equation}
\begin{equation} \label{Napd4}
\begin{split}
&\D \intO{\left[ \frac{1}{2} \varrho | {\bf u} |^2 + \varrho e_{\delta}(\varrho,\vartheta)
+ \delta \left( \frac{\vr^\beta}{\beta - 1} + \vr^2 \right) \right]}
- \left( \int_{\mt} \frac{\delta}{\vartheta^2}  \dx \right) \dt \\
&= \frac{1}{2}
\bigg(
\intTor{ \sum_{k \geq 1} \varrho | {\bf F}_{k} (\varrho,\vartheta, {\bf u})|^2  } \bigg) {\rm d}t
+ \left( \int_{\mt}\varrho {\vc{F} }(\varrho,\vartheta,\bfu)\cdot\bfu \dx \right) \D \W.
\end{split}
\end{equation}
\\
The main result of this section is the following.
\begin{Theorem} \label{wWT3}
Let $\beta > 6$.
Let $\Lambda$ be a Borel probability measure  on $C^{2 + \nu}(\Ov{Q})\times W^{1,2} \cap C(\Ov{Q}) \times L^2(Q;R^3)$ such that
\[
\begin{split}
\Lambda &\left\{ 0< \underline{\vr} \leq \intO{ \vr_0 } \leq \Ov{\vr} ,\ \Grad \vr_0 \cdot \vc{n}|_{\partial Q} = 0,\ \vr_0, \vt_0 > 0 \right\} = 1,\\
&\int_{C^{2 + \nu} \times W^{1,2} \times L^2} \left[ \| \vr_0 \|_{C^{2 + \nu}_x} + \| \vt_0 \|_{W^{1,2}_x \cap C_x} + \| \vu_0 \|_{L^2_x} \right]^r {\rm d}
\Lambda \leq c(r) \ \mbox{for all}\ r \geq 1.
\end{split}
\]
\\
Then
the approximate problem \eqref{Napd1}--\eqref{Napd4} admits a martingale solution in the sense of
Definition \ref{def:sol}.

\end{Theorem}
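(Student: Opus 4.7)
The proof follows the four-bullet roadmap announced above. Let $(\vr_\ep, \vt_\ep, \vu_\ep, \W_\ep)$ be the family of martingale solutions to \eqref{Npde1}--\eqref{Npde4} provided by Theorem \ref{wWP1a}. The first step is to combine (a multiple of) the entropy inequality \eqref{Napd3} with the total energy balance \eqref{Napd4} to form a total dissipation balance of the type \eqref{wWS24}, absorb the awkward $\varepsilon$-terms on the right as in the estimate \eqref{m140}, apply the Burkholder--Davis--Gundy inequality to the resulting stochastic integral exactly as in \eqref{wWS26}, and close by Gronwall. This yields, independently of $\varepsilon$,
\[
\expe{\sup_{t\in[0,T]}\intQ{\tfrac12\vr_\ep|\vu_\ep|^2 + H_{\delta,\Ov\vt}(\vr_\ep,\vt_\ep) + \tfrac{\delta}{\beta-1}\vr_\ep^\beta + \delta\vr_\ep^2}}^r + \expe{\int_0^T\|\vu_\ep\|^2_{W^{1,2}_0(Q;R^3)}\dt}^r \leq c(r),
\]
together with the bounds on $\vt_\ep$ from \eqref{m142*} and the fact that $\sqrt\ep\,\Grad\vr_\ep$, $\ep^{1/5}\vt_\ep$, etc.\ tend to zero, so that the $\ep$-regularization terms in \eqref{Npde1}--\eqref{Npde3} disappear in the limit.

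Next, an $L^p_{t,x}$-bound with some $p>1$ on the pressure $p(\vr_\ep,\vt_\ep)+\delta(\vr_\ep^\beta+\vr_\ep^2)$ is obtained by testing the momentum equation \eqref{Npde2} with the Bogovskii potential $\mathcal B[\vr_\ep-(\vr_\ep)_Q]$; the stochastic contribution is estimated by It\^o isometry using hypothesis \eqref{P-1} together with the uniform density bound in $L^\infty_t L^\beta_x$. Combined with the continuity equation this furnishes fractional time-regularity of $\vr_\ep\vu_\ep$ in a negative Sobolev space, much as in \eqref{wWS224}. All these bounds fit exactly the framework of Theorem \ref{RDT4}: taking $\vc{U}_{0,\ep}=(\vr_{0,\ep},\vt_{0,\ep},\vu_{0,\ep})$, $\vc{U}_\ep=(\vr_\ep,\vt_\ep,\vu_\ep,\Grad\vu_\ep,\Grad\vt_\ep,\Grad\vr_\ep)$ and $[[\cdot]]$ the sum of the norms just bounded, we obtain, on a new probability space, tilded random variables $(\tilde\vr_\ep,\tilde\vt_\ep,\tilde\vu_\ep,\tilde\W_\ep)$ converging $\tilde\p$-a.s.\ to $(\tilde\vr,\tilde\vt,\tilde\vu,\tilde\W)$ in the corresponding weak topologies, together with an associated Young measure. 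By Theorem \ref{RDT3} the tilded quintuple still solves \eqref{Npde1}--\eqref{Npde4} before the limit.

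The passage to the limit in \eqref{Npde1}, \eqref{Npde3} and in the deterministic part of \eqref{Npde2} is now carried out $\tilde\p$-almost surely, and therefore reduces \emph{verbatim} to the purely deterministic analysis of \cite[Chapter 3, Sections 3.6--3.7]{F}: this uses the renormalized continuity equation, Feireisl's oscillation defect measure, and the effective viscous flux identity to upgrade the weak convergence of $\tilde\vr_\ep$ to strong convergence in $L^1((0,T)\times Q)$, while the strong compactness of $\tilde\vt_\ep$ in $L^q_{t,x}$ follows from the monotonicity of $\vt\mapsto\vr s_\delta(\vr,\vt)$ together with a weak Aubin--Lions argument. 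Finally, the stochastic integral is handled by Lemma \ref{RDL4}: the pointwise (in $(\omega,t,x)$) convergence of $\tilde\vr_\ep$ and $\tilde\vu_\ep$, continuity of $\vc F_{k,\ep}$ in its arguments, and the $\ep$-uniform bound $\|\vc F_{k,\ep}\|_{L^\infty}\le c(\ep_0)f_{k,\ep_0}$ for $\ep<\ep_0$ (coming from \eqref{P-1new}) yield strong $L^2(0,T;L_2(\mathfrak U;W^{-\ell,2}(Q)))$-convergence of the diffusion coefficient and hence convergence of the stochastic integral in probability.

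\textbf{Main obstacle.} As in the deterministic theory, the decisive difficulty is the strong compactness of the density, needed both to identify the nonlinear pressure and to pass to the limit in $\tilde\vr_\ep\vc F(\tilde\vr_\ep,\tilde\vt_\ep,\tilde\vu_\ep)$ inside the stochastic integral. We cannot take expectations before identifying weak limits (the Young measure lives on the new probability space), so the effective viscous flux identity and the renormalized continuity argument must be executed $\tilde\p$-a.s., using only the pathwise bounds delivered by Skorokhod. A subtle point, emphasized in the introduction, is that changing the probability space must be performed \emph{before} proving strong compactness of the arguments of $\vc F$; this is precisely why Theorem \ref{RDT4} is stated in a form that produces a Young measure together with the almost-sure weak convergence. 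Once the strong compactness of $\tilde\vr_\ep$ is in hand, the remaining limits - including in the energy balance \eqref{Npde4}, where the term $\ep\intQ{\vt_\ep^5}$ disappears and the stochastic correction $\tfrac12\sum_k\intQ{\vr_\ep|\vc F_{k,\ep}|^2}$ passes to the limit by dominated convergence - are routine and produce a martingale solution of \eqref{Napd1}--\eqref{Napd4} in the sense of Definition \ref{def:sol}.
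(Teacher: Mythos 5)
Your proposal follows essentially the same route as the paper: uniform bounds from the total dissipation balance, Bogovskii pressure estimates (Section \ref{wWS3S2}), Kolmogorov-type time compactness of the momentum, change of probability space via the Jakubowski--Skorokhod framework of Theorems \ref{RDT4}--\ref{RDT3} \emph{before} identifying the nonlinear limits, pathwise deterministic identification of the temperature and density limits via the renormalized continuity equation and the effective viscous flux identity, and passage in the stochastic integral via Lemma \ref{RDL4}. One small inaccuracy worth flagging: the bound $\|\vc F_{k,\ep}\|_{L^\infty}\le f_{k,\ep}$ from \eqref{P-1new} is not $\ep$-uniform in the direction you suggest --- as $\ep\to 0$ the velocity cutoff relaxes and $f_{k,\ep}$ may grow like $1/\ep$ --- but the argument is unaffected because what the paper (and your later reduction) actually uses is the $\ep$-uniform global Lipschitz bound for $\vc F_{k,\ep}$ inherited from \eqref{P-1}, together with the estimate $\vr|\vc F_{k,\ep}(\vr,\vt,\vu)|\aleq f_k\,\vr(1+|\vu|)$ which is controlled by the energy.
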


The proof of Theorem \ref{wWT3} requires the full strength of the method developed in the context of the deterministic Navier--Stokes system.
Possible oscillations of the density are ruled out thanks to the weak compactness of a quantity called \emph{effective viscous flux},
\[
\left(\frac{4}{3} \mu(\vt) + \eta(\vt) \right) \Div \vu - p(\vr, \vt),
\]
where $\mu$, $\eta$  are the viscosity coefficients.

\subsection{Uniform energy bounds}
\label{wWS3S1}

Using (\ref{Npde3}), (\ref{Npde4}), or rather their variational formulation via the appropriate versions of (\ref{m217*final}) and (\ref{EI20final}) containing the artificial pressure, we deduce the total dissipation balance
\begin{align} \label{NTDB}
\begin{aligned}
&
 \intQ{ \Big[ \frac{1}{2} \varrho | {\bf u} |^2 + H_{\delta,\Theta}(\varrho,\vartheta)+\frac{\delta}{\beta-1}\varrho^\beta
+ { \delta \vr^2}  \Big] }
+\Theta \int_0^\tau\int_{Q}\sigma_{\varepsilon,\delta}\dx
+ \int_0^\tau\int_{Q} \ep \vartheta^5   \dt\\
&\leq \,\intQ{ \Big[ \frac{1}{2} \vr_0 |\vu_0|^2  + H_{\delta,\Theta}(\varrho_0,\vartheta_0)+\frac{\delta}{\beta-1}\varrho_0^\beta + {\delta \vr_0^2} \Big] }\\
&+\int_0^\tau\int_{Q}\bigg(\frac{\delta}{\vartheta^2}+\varepsilon\Theta\vartheta^4\bigg)\dx\dt
+\int_0^\tau \int_{Q}\varrho {\vc F}_\ep(\varrho,\vartheta,\bfu)\cdot\bfu\, \dx \ \dd W\\&+ \frac{1}{2} \int_0^\tau
\bigg(
\intQ{ \sum_{k \geq 1} \varrho | {\bf F}_{k,\ep} (\varrho,\vartheta,{\bf u}) |^2  } \bigg) {\rm d}t
\end{aligned}
\end{align}
for any $0 \leq \tau \leq T$ $\pas$,
where
\begin{align*}
\sigma_{\varepsilon,\delta}&=\frac{1}{\vartheta}\Big[\mathbb S(\vartheta,\nabla\bfu):\nabla\bfu+\frac{\kappa(\vartheta)}{\vartheta}|\nabla\vartheta|^2+\frac{\delta}{2}\Big(\varrho^{\beta-1}+\frac{1}{\vartheta^2}\Big)|\nabla\vartheta|^2+\delta\frac{1}{\vartheta^2}\Big]\\
&+\frac{\varepsilon\delta}{2\vartheta}{\left( \beta\varrho^{\beta-2} + 2 \right)} |\nabla\varrho|^2+\varepsilon\frac{\partial p_M}{\partial\varrho}(\varrho,\vartheta)\frac{|\nabla\varrho|^2}{\varrho\vartheta}.
\end{align*}
\\
Thanks to hypothesis (\ref{P-1}) we get
\[
\int_{Q} \vr \left|\vc{F}_{k,\ep} (\vr, \vartheta ,\vu)\right||\vc{u}| \dx \aleq f_k \intO{ \vr (1 + |\vu|^2) },\
\intQ{ \vr \left|  \vc{F}_{k,\ep} (\vr, \vartheta ,\vu) \right|^2 } \aleq f_k^2 \intO{ \vr (1 + |\vu|^2) }.
\]
Thus we may pass to expectations in (\ref{NTDB})
and apply a Gronwall-type argument to  deduce the following bounds, cf. \cite[Chapter 3, Section 3.6.1]{F}:
\begin{equation} \label{Nbv1}
\expe{ \left| {\rm ess} \sup_{t \in (0,T)} \intQ{ \Big[ \frac{1}{2} \varrho | {\bf u} |^2 + H_{\delta,\Theta}(\varrho,\vartheta)+\frac{\delta}{\beta-1}\varrho^\beta
+ { \delta \vr^2}  \Big] } \right|^r } \leq c(r, \Lambda),
\end{equation}
in particular
\begin{equation} \label{Nbv2}
\begin{split}
\expe{ \left| \sup_{t \in (0,T)} \| \vr \|^\beta_{L^\beta(Q)} \right|^r } &\leq c(r, \Lambda),\\
\expe{ \left| \sup_{t \in (0,T)} \| \vr \vu \|_{L^{\frac{2 \beta}{\beta + 1}}(Q; R^3)} \right|^r } &\leq c(r, \Lambda), \\
\expe{ \left| {\rm ess} \sup_{t \in (0,T)} \| \vt \|^4_{L^4(Q)} \right|^r } &\leq c(r, \Lambda).
\end{split}
\end{equation}
Moreover, boundedness of the entropy production rate
\begin{equation} \label{Nbv3}
\expe{ \| \sigma_{\ep, \delta} \|_{L^1((0,T) \times Q}^r } \leq c(r, \Lambda)
\end{equation}
gives rise to
\begin{equation} \label{Nbv4}
\expe{ \| \Grad \vu \|^r_{L^2(0,T) \times Q; R^{3 \times 3}) } } + \expe{ \| \Grad \vt \|^r_{L^2(0,T) \times Q; R^3) } } \leq c(r, \Lambda);
\end{equation}
whence, by Poincare's inequality and (\ref{Nbv2}),
\begin{equation} \label{Nbv5}
\expe{ \| \vu \|^r_{L^2(0,T; W^{1,2}(Q; R^3) ) } } + \expe{ \| \vt \|^r_{L^2(0,T; W^{1,2}(Q)) } } \leq c(r, {\Lambda}).
\end{equation}
Note that we keep the initial law $\Lambda$ the same as in the previous section.
\\
Finally, we deduce from the equation of continuity (\ref{NwW3}) that
\begin{equation} \label{Nbv6}
\intO{ \vr(t, \cdot) } = \intO{ \vr_0 },\
\expe{ \| \sqrt{\ep} \Grad \vr \|_{L^2((0,T) \times \tor)}^r } \leq c(r, \Lambda).
\end{equation}
Note that all estimates are independent of $\ep$.
\\
The above bounds are not strong enough to control the pressure term proportional to $\vr^\beta$ that is for the current stage bounded only in the non-reflexive space
$L^1_x$. The adequate estimates will be derived in the next section.

\subsection{Pressure estimates}
\label{wWS3S2}

Following \cite[Chapter 2, Section 2.2.5]{F} we introduce the Bogovskii operator $\mathcal{B}$ enjoying the following properties:
\[
\mathcal{B}: L^q_0 \equiv \left\{ f \in L^q(R^3) \ \Big| \ (f)_{Q} = 0 \right\} \to W^{1,q}_0(Q; R^3), \ \Div \mathcal{B}[f] = f
\ \mbox{in} \ q, \ 1 < q < \infty,
\]
\[
\left\| \mathcal{B} \left[ \Div \vc{g} \right] \right\|_{L^r(Q)} \aleq \| \vc{g} \|_{L^r(Q)}
\ \mbox{whenever} \Div \vc{g} \in L^q_0, \ 1 < q,r < \infty.
\]
\\
The idea, borrowed again from \cite[Chapter 2, Section 2.2.5]{F}, is to use the quantity
\[
\mathcal{B} \left[ \vr - (\vr)_{Q} \right]
\]
as a  test function in the variational formulation of the
momentum balance (\ref{Napde2}) (cf. formula (\ref{wWS238final})). Note that this is not straightforward as the legal test functions allowed
have the form $\psi(t) \bfphi(x)$, where both $\psi$ and $\bfphi$ are smooth and deterministic.
Nevertheless, such a procedure can be rigorously justified by the application of  a suitable version of the generalized It\^o formula to the functional
$$
(\rho,\bfq)\mapsto\int_{\mt}\bfq\cdot \Del^{-1} \Grad \vr\dx
$$
(see \cite[Sec. 5]{BrHo}).
We rewrite (\ref{Napde2}) in the differential form
\begin{equation} \label{kolmo}
\begin{split}
\D &\intQ{ \vr \vu \cdot \bfphi } - \intQ{ \Big[ \vr \vu \otimes \vu : \Grad \bfphi  +  p_\delta(\vr,\vt)
 \Div \bfphi  \Big] }\dt
\\
&=- \intQ{ \Big[ \mathbb{S}(\vt,\Grad \vu) : \Grad \bfphi  + \ep \Grad(\vr \vu) : \Grad \bfphi  \Big] } \dt
+ \intQ{ \vr  {\vc{F}_\ep} (\vr, \vt, \vu) \cdot \bfphi } \ \D  \W .
\end{split}
\end{equation}
Seeing that
\[
\D \left( \mathcal{B} \left[ \vr - ( \vr )_{Q} \right]\right) = - \mathcal{B} \Div (\vr \vu) \dt +
\ep \mathcal{B} [\Delta \vr]  \dt,
\]
we obtain
\begin{align}
&\int_0^T \intQ{ p_\delta(\vr,\vt)\left[ \vr - (\vr )_{Q} \right]    }\dt\nonumber\\
&= \left[ \intQ{ \vr \vu \cdot \mathcal{B} \left[  \vr - ( \vr )_{Q}\right] } \right]_{t = 0}^{t=T}  - \int_0^T \intQ{ \vr \vu \otimes \vu : \Grad \mathcal{B} \left[  \vr - ( \vr )_{Q}\right]} \dt \nonumber\\
&\qquad+ \int_0^T \intQ{ \mathbb{S}(\vt,\Grad \vu) : \Grad \mathcal{B} \left[  \vr - ( \vr )_{Q}\right] } \dt
+\int_0^T \intQ{ \vr \vu \cdot  \mathcal{B}[ \Div (\vr \vu) ] } \dt \nonumber\\
&\qquad+ \ep \intQ{\left[ \Grad (\vr\vu) \cdot \Grad \mathcal{B}\left[ \vr - (\vr)_Q \right]  - \vr \vu \cdot \mathcal{B}(\Delta \vr) \right] }\dt
\nonumber
\\&\qquad - \int_0^T \intQ{ \vr  {\vc {F}_\ep} (\vr,\vt, \vu) \cdot \mathcal{B}\left[ \vr - (\vr)_Q \right] } \ \D \W .\label{wWS310}
\end{align}
\\
As shown in \cite[Chapter 3, Section 3.6.3]{F}, all deterministic integrals on the right--hand side of \eqref{wWS310} are controlled
by the energy bounds. This means, in the present setting, their expected values are controlled by the bounds (\ref{Nbv2}--\ref{Nbv6}).
\\
As for the stochastic integral, we apply the Burkholder--Davis--Gundy inequality:
\[
\begin{split}
&\expe{ \left| \int_0^\tau \intQ{ \vr  {\vc {F}_\ep} (\vr, \vt,\vu) \cdot \mathcal{B} \left[ \vr - (\vr)_Q \right]  } \ \D \W \right|^r } \\
&\leq \expe{ \sup_{t\in[0,\tau]}\left| \int_0^t \intQ{ \vr  {\vc {F}_\ep} (\vr, \vt,\vu) \cdot \mathcal{B} \left[ \vr - (\vr)_Q \right] } \ \D \W \right|^r } \\
&\aleq \expe{ \int_0^\tau \sum_{k =1}^\8 \left|\, \intQ{\vr  {\bf {F}}_{k,\ep} (\vr,\vt, \vu) \cdot \mathcal{B} \left[ \vr - (\vr)_Q \right] }\right|^2\dt }^{r/2},
\end{split}
\]
where, due to \eqref{P-1} and the properties of $\mathcal{B}$ and $\beta>3$,
\[
\begin{split}
\left|\, \intQ{\vr  {\bf {F}}_{k, \ep } (\vr,\vt, \vu) \cdot \mathcal{B} \left[ \vr - (\vr)_Q \right] } \right| &\aleq f_k \| \mathcal{B} \left[ \vr - (\vr)_Q \right] \|_{L^\infty_x}
{\intQ{  \vr(1 + |\vu|)   } } \\ &\aleq f_k \| \vr \|_{L^\beta_x}
\intQ{ \vr (1 + |\vu|)  }.
\end{split}
\]
Hence we conclude that
\begin{equation} \label{wWS316}
\expe{ \left| \int_0^T \intQ{ p_\delta(\vr,\vt) \vr} \dt \right|^r } \leq c(r, \delta, \Lambda).
\end{equation}

\subsection{Compactness of the momentum in time}
\label{CMIT}

Using the same arguments as in Section \ref{EGA}, specifically, the Kolmogorov continuity criterion, we may deduce from the momentum equation (\ref{kolmo}) the estimate
\begin{equation} \label{Nbv7}
\expe{ \| \vr \vu \|^r_{C^{s}([0,T]; W^{-k,2}(Q; R^3) } } \aleq c(r) \ \mbox{for a certain}\ 0 < s(r) < \frac{1}{2}, \ k > \frac{3}{2}.
\end{equation}

\subsection{Stochastic compactness method}
\label{subsub:stochep}

The uniform bounds derived in the previous section are optimal in view of the energy method. We are ready to perform the limit $\ep \to 0$. We proceed in two steps.
Similarly to Section \ref{EGA}, we make use of the general results stated in Theorems \ref{RDT4} and \ref{RDT3} to pass to the standard probability space
$([0,1], \Ov{\mathfrak{B}[0,1]}, \mathfrak{L})$.
Then
we adapt the method known for the deterministic case to show compactness of the {temperatures} and the densities which is the main issue here.
\\
Applying Theorem \ref{wWP1a} we get a family of martingale solutions $(\vre, \vte, \vue)_{\ep > 0}$ of problem (\ref{Npde1}--\ref{Npde4}). Evoking the compactness
method used in Section \ref{EGA}, we get a new family of random variables $(\tvre, \tvte, \tvue)_{\ep > 0}$, together with the processes $\tilde{W}_\ep$ and
with the associated right-continuous complete filtration  $(\tilde{\mathfrak{F}}^\ep_t)_{t \geq 0}$
such that:
\begin{itemize}
\item
$\left[ \left( [0,1], \Ov{\mathfrak{B}[0,1]}, (\tilde{\mathfrak{F}}^\ep_t)_{t \geq 0} ,  \mathfrak{L},\right), \tvre , \tvte, \tvue,
\tilde{W}_\ep \right]$
is a weak martingale solution of problem (\ref{Npde1}--\ref{Npde4}), with the initial law $\Lambda$;
\item
the initial data $(\tilde{\vr}_{0,\ep}, \tilde{\vt}_{0,\ep}, \tilde{\vu}_{0,\ep})$ satisfy
\[
\begin{split}
\tilde{\vr}_{0,\ep} &\to \tilde{\vr}_0 \ \mbox{in} \ C^{2 + \nu}(\Ov{Q}),
\tilde{\vt}_{0,\ep}  \to \tilde{\vt}_0 \ \mbox{in}\ W^{1,2} \cap C(\Ov{Q}), \ {\vr}_0, {\vt}_0 > 0,\\
\tilde{\vu}_{0,\ep} &\to \tilde{\vu}_0 \ \mbox{in}\ L^2(Q, R^3),
\end{split}
\]
$\mathfrak{L}-$a.s.;
\item the functions $(\tvre, \tvte, \tvue)$ satisfy the bounds (\ref{Nbv1}--\ref{Nbv5}), (\ref{wWS316}), and (\ref{Nbv7}) $\mathfrak{L}-$a.s.
uniformly for $\ep \to 0$;
\item we have
\begin{equation} \label{Ncv1}
\begin{split}
\tvre &\to \tilde{\vr} \ \mbox{weakly in}\ L^{\beta + 1}((0,T) \times Q) \  \mbox{and weakly-(*) in}\ L^\infty(0,T; L^\beta(Q)) \ \mathfrak{L}-\mbox{a.s.},\\
\tvte &\to \tilde\vt \ \mbox{weakly in}\ L^2(0,T; W^{1,2}(Q)) \ \mbox{and weakly-(*) in} \
L^\infty (0,T; L^4(\Omega)) \ \mathfrak{L}-\mbox{a.s.},\\
\tvue &\to \tilde\vu \ \mbox{weakly in}\ L^2(0,T; W^{1,2}_0(Q;R^3))\ \mathfrak{L}-\mbox{a.s.},\\
\tvre \tvue &\to \tilde\vr \tilde\vu \ \mbox{in}\ C([0,T]; W^{-k,2}(Q; R^3)) \ k > \frac{3}{2},\  \mathfrak{L}-\mbox{a.s.};
\end{split}
\end{equation}
\item
the family $(\tilde\vr_\ep, \tilde\vt_\ep, \tilde\vu_\ep\tilde\vr_\ep, \Grad \tilde\vt_\ep, \Grad \tilde\vu_\ep)_{\ep > 0}$ generates a Young measure;
\item we have
\[
\tilde{W}_\ep \to \tilde{W} \ \mbox{in}\ C([0,T]; \mathfrak{U}_0 )\ \mathfrak L\mbox{-a.s.}
\]

\end{itemize}

\subsection{{Strong convergence of the temperature}}

We establish the strong convergence of the temperature fields $(\tvte)_{\ep > 0}$ exploiting \mbox{pathwise} the piece of information provided by the
deterministic entropy inequality (\ref{Napde3}). Indeed repeating step by step the arguments in \cite[Chapter 3, Section 3.6.2]{F} for an fixed $\omega \in (0,1)$ we can show that
\begin{equation} \label{Ncv2}
\tvte \to \tilde\vt \ \mbox{in}\ L^4((0,T) \times \tor) \ \mathfrak L\text{-a.s}.
\end{equation}
\\
In a similar way, we recover the weak formulation of the {\emph{renormalized equation of continuity}:}
\begin{equation} \label{Npde1c}
\begin{split}
&\int_0^T \intQ{ \left[ b(\tvre) \partial_t \psi + b(\tvre) \tvue \cdot \Grad \psi + \left(b(\tvre) - \tvre b'(\tvre) \right) \Div \tvue \psi  \right] }\dt \\
&=\ep \int_0^T \intQ{ b''(\tvre) |\Grad \tvre |^2 } \dt + \ep \int_0^T \intQ{ b'(\tvre) \Grad \tvre \Grad \psi } \dt   - \intQ{ b(\tilde{\vr}_{0,\ep}) \psi(0) }
\end{split}
\end{equation}
for any $\psi \in \DC([0,T) \times R^3)$ and any sufficiently smooth $b$.

\subsection{The limit in the stochastic integral}
\label{sec:limstochint}
To perform the limit in the momentum equation (\ref{Npde2}) we have to handle the stochastic integral. In view of Lemma \ref{RDL4}, we need to
show the strong convergence
\begin{align}\label{wWS329a}
\tilde\vre \vc{F}_\ep (\tvre,\tvte, \tvue) \to \Ov{ \tilde\vr \vc{F}(\tilde\vr, \tilde\vt, \tilde\vu) } \ \mbox{in}\ L^2(0,T; L^2(\mathfrak{U}_0; W^{-k,2}(Q))), \ k > \frac{3}{2}.
\end{align}
\\
First, we claim that it is enough to show that
\begin{align}\label{wWS329b}
\tilde\vre \vc{F}_\ep (\tvre,\tilde\vt, \tilde\vu) \to \Ov{ \vr \vc{F}(\tilde\vr, \tilde\vt, \tilde\vu) } \ \mbox{in}\ L^2(0,T; L^2(\mathfrak{U}_0; W^{-k,2}(Q))), \ k > \frac{3}{2}.
\end{align}
Indeed as the functions $\vc{F}_{k,\ep}$ are globally Lipschitz (uniformly for $\ep \to 0$, recall \eqref{P-1}), we have
\[
\left| \tilde\vre \vc{F}_{k,\ep} (\tvre,\tvte, \tvue) - \vre \vc{F}_{k,\ep} (\tvre, \tilde\vt, \tilde\vu) \right| \aleq
f_{k} \vre \left( | \tvte - \tilde\vt | + |\tvue - \tilde\vu| \right),
\]
where, by virtue of (\ref{Ncv1}), (\ref{Ncv2}),
\[
\left\| \tilde\vre (\tvte - \tilde\vt) \right\|_{L^1(Q)} \leq \| \tvre \|_{L^{4/3}(Q)} \| \tvte - \tilde\vt \|_{L^4(Q)} \to 0
\ \mbox{in}\ L^4(0,T)\ \mathfrak{L}\mbox{-a.s.}
\]
Similarly,
\begin{equation} \label{Ncv4}
\left\| \tilde\vre (\tvue - \tilde\vu) \right\|_{L^1(Q)} \leq \| \sqrt{\tvre} \|_{L^{2}(\tor)} \| \sqrt{\tvre}( \tvue - \tilde\vu ) \|_{L^2(Q; R^3)}.
\end{equation}
However, in view of (\ref{Ncv1}) again, we have
\begin{equation} \label{Ncv5}
\int_0^T \intQ{ \tvre |\tvue|^2 } \ \dt \to \int_0^T \intQ{ \tilde\vr |\tilde\vu|^2 } \dt;
\end{equation}
whence the right--hand side of (\ref{Ncv4}) tends to zero in $L^2(0,T)$ $\mathfrak{L}$-a.s.
\\
Now, convergence in (\ref{wWS329b}) follows the fact that $\tvre$ satisfy the renormalized equation (\ref{Npde1c}), specifically,
\[
b(\tvre) \to \Ov{b(\tilde\vr)}\ \mbox{in}\ C_w([0,T]; L^p(Q)) \ \mathfrak{L}-a.s. \ \mbox{for any bounded continuous}\ b,
\]
see \cite[Chapter 4]{BFHbook}.
\\
In view of Lemma \ref{RDL4}, we may pass to the limit in the approximate momentum equation (\ref{Npde2}) obtaining
\begin{equation}\label{LMeq}
\begin{split}
&\int_0^T \partial_t \psi \intQ{ \tilde\vr \tilde\vu \cdot \bfvarphi } \dt\\
&+\int_0^T \psi \intQ{ \tilde\varrho\tilde\bfu\otimes\tilde\bfu : \nabla \bfvarphi } \dt -\int_0^T \psi \intQ{ \mathbb S(\tilde\vartheta,\nabla\tilde\bfu) : \nabla\bfvarphi } \dt\\
&+\int_0^T \psi \intQ{ \Ov{ p_\delta(\tilde\varrho,\tilde\vartheta) } \diver\bfvarphi }\dt + \int_0^t \psi \intQ{ \Ov{ \tilde\varrho{\vc{F}}(\tilde\varrho, \tilde\vt , \tilde\bfu)} \cdot
\bfvarphi }\,\dif \tilde{W} \\&= - \intQ{ \tilde\vr_0 \tilde\bfu_0 \cdot \bfvarphi };
\end{split}
\end{equation}
for all $\psi \in \DC[0,T)$, $\bfvarphi\in \DC (Q;R^3)$ $\mathfrak{L}$-a.s.

\subsection{Strong convergence of the density}
\label{subsec:strongconvdensity}

In the first step, we proceed as in the proof of \eqref{wWS316} and test the momentum
equation \eqref{Npde2} by {$\psi(t) \zeta(x) \nabla \Delta^{-1}[1_Q \tvre]$},
where $\psi \in \DC(0,T)$, $\zeta \in \DC(Q)$ and $\Delta^{-1}$ is the inverse of the Laplacean
on the whole space $R^3$, cf. \cite[Chapter 3, Section 3.6.5]{F}. In the stochastic terms, we apply It\^{o}'s formula to the function $f(\rho,\bfq)=\int_{Q} \bfq\cdot \zeta \Grad \Delta^{-1}[1_Q\rho]\dx$. Similarly to \eqref{wWS310} we obtain the integral identity:
\begin{equation}\label{newEVF}
\int_0^T \intQ{ \psi \zeta \Big( p_\delta(\tvre, \tvte) \tvre -
\mathbb{S}(\tvte,\Grad \tvue) : {\cal
R}[1_Q \tvre] \Big) } \ \dt = \sum_{j=1}^8 I_{j, \ep} \ \mathfrak{L}-\mbox{a.s.}
\end{equation}
where
\[
\begin{split}
I_{1,\ep} &= - \ep \int_0^T \intQ{ \psi \zeta \ \tvre \tvue \cdot
\Grad \Delta^{-1}[ \Div (1_Q \Grad \tvre)] } \ \dt\\
I_{2,\ep} &= \ep \int_0^T \intQ{ \ep \psi \Grad (\tvre \tvue) \cdot \Grad \left(
\zeta \nabla \Delta^{-1}[1_Q \tvre] \right) }
\dt\\
I_{3, \ep} &= \int_0^T \intQ{ \psi \zeta \Big( \tvre \tvue \cdot
{\cal R} [ 1_Q \tvre \tvue ] - (\tvre \tvue \otimes \tvue):
{\cal R}[1_Q \tvre ] \Big)} \ \dt,
\\
I_{4, \ep} &= - \int_0^T \intQ{ \psi p_\delta(\tvre, \tvte) \Grad
\zeta \cdot \Grad \Delta^{-1}_x[1_Q \tvre ] } \ \dt,
\\
I_{5,\ep} &= \int_0^T \intQ{ \psi
\mathbb{S}(\tvte,\Grad \tvue) : \Grad \zeta
\otimes \Grad \Delta^{-1}_x[1_Q \tvre ] } \ \dt ,
\\
I_{6,\ep} &= - \int_0^T \intQ{ \psi (\tvre \tvue \otimes \tvue ) :
\Grad \zeta \otimes \Grad \Delta^{-1}_x[1_Q \tvre ] } \ \dt,
\\
I_{7,\ep} &= - \int_0^T \intQ{ \partial_t \psi \ \zeta \tvre \tvue
\cdot \Grad \Delta^{-1}_x[ 1_Q \tvre ] } \ \dt,
\end{split}
\]
and
\[
I_{8,\ep} = - \int_0^T \psi \intQ{\zeta \vr \vc{F}_{\ep} (\tvre, \tvte, \tvue) \cdot \Grad \Delta^{-1}_x[ 1_Q \tvre ] }
\ \D \tilde{W}_\ep
\]
Here, the symbol ${\cal R}$ stands for the \emph{double Riesz
transform}, defined componentwise as ${\cal R}_{i,j} =
\partial_{x_i} \Delta^{-1}
\partial_{x_j}$, cf. \cite[Chapter 3, Section 3.6.5]{F}.
\\
Applying the same treatment with $\psi(t) \zeta(x) \nabla \Delta^{-1}[1_Q \vr]$ as the multiplier
to the limit equation (\ref{LMeq}), we obtain
\begin{equation}\label{newEVFL}
\int_0^T \intQ{ \psi \zeta \Big( \Ov{p_\delta(\tilde\vr, \tilde\vt)} \vr  -
\mathbb{S}(\tilde\vt,\Grad \tilde\vu) : {\cal
R}[1_Q \tilde\vr] \Big) } \ \dt = \sum_{j=3}^8 I_{j} \ \mathfrak{L}\mbox{-a.s.}
\end{equation}
where
\[
\begin{split}
I_{3} &= \int_0^T \intQ{ \psi \zeta \Big( \tilde\vr \tilde\vu \cdot
{\cal R} [ 1_Q \vr \vu ] - (\tilde\vr \tilde\vu \otimes \tilde\vu):
{\cal R}[1_Q \vr ] \Big)} \ \dt,
\\
I_{4} &= - \int_0^T \intQ{ \psi \Ov{p_\delta(\tilde\vr, \tilde\vt)} \Grad
\zeta \cdot \Grad \Delta^{-1}_x[1_Q \tilde\vr ] } \ \dt,
\\
I_{5} &= \int_0^T \intQ{ \psi
\mathbb{S}(\tilde\vt,\Grad \tilde\vu) : \Grad \zeta
\otimes \Grad \Delta^{-1}_x[1_Q \tilde\vr ] } \ \dt ,
\\
I_{6} &= - \int_0^T \intQ{ \psi (\tilde\vr \tilde\vu \otimes \tilde\vu ) :
\Grad \zeta \otimes \Grad \Delta^{-1}_x[1_Q \tilde\vr ] } \ \dt,
\\
I_{7} &= - \int_0^T \intQ{ \partial_t \psi \ \zeta \tilde\vr \tilde\vu
\cdot \Grad \Delta^{-1}_x[ 1_Q \tilde\vr ] } \ \dt,
\end{split}
\]
and
\[
I_{8} = - \int_0^T \psi \intQ{\zeta \Ov{ \tilde\vr \vc{F} (\tilde\vr, \tilde\vt, \tilde\vu) } \cdot \Grad \Delta^{-1}_x[ 1_{Q} \tilde\vr ] }
\ \D \tilde{W}.
\]
\\
Now, using the deterministic arguments, exactly as in \cite[Chapter 3, Section 3.6.5]{F}, we may use the uniform bounds established in Section \ref{wWS3S1}, notably (\ref{Nbv6}) to show that
\[
I_{1,\ep}, \ I_{2,\ep} \to 0,\ I_{j, \ep} \to I_j \ \mbox{for}\ j=3,\dots,7 \ \mathfrak{L}-\mbox{a.s.}
\]
\\
Finally, we claim convergence of the stochastic integrals $I_{8,\ep} \to I_8$.
To this end, we use Lemma \ref{RDL4}. To begin, we observe that (\ref{wWS329a}), together with the available uniform bounds, implies
\begin{equation} \label{wWS329'}
\tilde\vre \vc{F}_\ep (\tvre, \tilde\vte, \tvue) \to \Ov{ \tilde\vr \vc{F}(\tilde\vr, \tilde\vt, \tilde\vu) } \quad \text{in}\quad L^2(0,T; L^2(\mathfrak{U}_0; W^{-1,2}(Q)))
\end{equation}
$\mathfrak{L}$-a.s. Moreover, we have
\begin{align}\label{eq:comrho}
\zeta \Delta^{-1}\nabla\tilde\varrho_\varepsilon\rightarrow \zeta \Delta^{-1}
\nabla\tilde\varrho\quad\text{in}\quad C([0,T]; W^{1,2}_0(Q;R^3))
\end{align}
by the compactness of the operator $\zeta \Delta^{-1}\nabla: L^p(R^3)\rightarrow W^{1,2}_0(Q)$ for $p>\frac{6}{5}$.
Combining \eqref{wWS329'} and \eqref{eq:comrho} with Lemma \ref{RDL4}
we conclude $I_{8,\ep} \to I_8$.\\
\\
{Thus we have shown that $\mathfrak{L}$-a.s}
\begin{equation}\label{eq:flux0}
\begin{split}
\int_0^T\int_{Q}& \psi \zeta \Big(p_\delta(\tilde\varrho_\varepsilon,\tilde\vartheta_\varepsilon)\tilde\varrho_\varepsilon
- \mathbb S(\tilde\vartheta_\varepsilon,\nabla \tilde\bfu_\varepsilon):\nabla\Delta^{-1}\Grad 1_Q \tvre \Big)\,\dif x\,\dif t\\
\to \int_0^T\int_{Q}& \psi \zeta \Big( \Ov{p_\delta(\tilde\varrho , \tilde\vartheta )} \tilde\varrho
- \mathbb S(\tilde\vartheta ,\nabla \tilde\bfu ):\nabla\Delta^{-1}\Grad 1_Q \tilde\vr \Big)\,\dif x\,\dif t.
\end{split}
\end{equation}
\\
{Relation (\ref{eq:flux0}) gives rise to the effective viscous flux identity discovered by Lions \cite{Li2} and, after a tedious and rather nonstandard manipulation, gives
rise to the strong (pointwise) convergence of the density, more specifically,}
\begin{equation} \label{Ndconv}
\intQ{ \tvre \log(\tvre)(\tau) } \to \intQ{ \tilde\vr \log (\tilde\vr)(\tau) } \ \mbox{for any}\ \tau \in [0,T]\ \mathfrak L\text{-a.s}.
\end{equation}
The arguments are purely deterministic and use only the renormalized equation of continuity (\ref{Npde1c}) and compactness ($\pas$) of the initial density distribution
$\tilde{\vr}_{\ep,0}$. A detailed proof is given in \cite[Chapter 3, Section 3.6.5]{F}; see \cite[Chap. 4.4]{BFHbook} for the barotropic stochastic case. As the function $\vr \mapsto \vr \log \vr$ is strictly convex,
relation (\ref{Ndconv}) implies (up to a subsequence) strong (a.a. pointwise) convergence of $(\tvre)_{\ep > 0}$.

\subsection{Conclusion}

As we have established strong convergence of the families $(\tvre)_{\ep > 0}$, $(\tvte)_{\ep > 0}$ it is a routine matter to pass to the limit in the
weak formulation of (\ref{Npde1}--\ref{Npde4}) to obtain the limit system (\ref{Napd1}--\ref{Napd4}). We have shown Theorem \ref{wWT3}.

\section{The limit in the artificial pressure}
\label{sec:vanishingpressure}

In this final section we let $\delta\rightarrow0$ in the approximate system (\ref{Napd1}--\ref{Napd4}) and complete the proof of the existence of solutions stated in Theorem \ref{thm:main}.  As in the preceding sections, the proof consists in {\bf (i)} showing uniform bounds independent of $\delta$, {\bf (ii)} applying the stochastic
compactness method based on Skorokhod representation theorem, {\bf (iii)} showing compactness of the temperature and the density by means of deterministic arguments.

\subsection{{Initial data}}

The initial data considered in Theorem \ref{wWT3} are quite regular. In order to achieve the generality of the initial law in Theorem \ref{thm:main}, we consider
a family of Borel probability measure $(\Lambda_\delta)_{\delta > 0}$ defined on $L^1(Q) \times L^1(Q) \times L^1(Q; R^3)$ such that
\[
\begin{split}
\Lambda_\delta &= \left\{ \vr_0 > 0,\ 0 < \underline{\vr} \leq \intO{ \vr_0 } \leq \Ov{\vr} ,\ \Grad \vr_0 \cdot \vc{n} = 0, \vr_0,\ \vt_0 > 0 \right\} = 1,\\
&\int_{L^1 \times L^1 \times L^1} \left[ \| \vr_0 \|^r_{C^{2 + \nu}_x} + \| \vt_0 \|_{W^{1,2}_x \cap C_x}^r + \| \vu_0 \|^r_{L^2_x} \right] {\rm d}
\Lambda_\delta \leq c(r, \delta) \ \mbox{for all}\ r \geq 1.
\end{split}
\]
Let $\Lambda$ be the law specified in \eqref{Ida}. We consider a sequence $(\Lambda_\delta)_{\delta > 0}$ such that
\[
\begin{split}
\Lambda_\delta &\to \Lambda \ \mbox{weakly-(*) in}\ \mathcal{M}^+ (L^1(Q) \times L^1(Q) \times L^1(Q; R^3)),\\
&\int_{L^1_x \times L^1_x \times L^1_x} \left\| \vr_0 |\vu_0|^2 + \vr_0 e_\delta(\vr_0, \vt_0) +
\vr_0 s_\delta (\vr_0, \vt_0) + \delta \vr_0^\beta \right\|_{L^1_x}^r {\rm d}
\Lambda_\delta \leq c(r) \ \mbox{for all}\ r \geq 1
\end{split}
\]
uniformly for $\delta \to 0$.

\subsection{Uniform energy bounds}
\label{wWS4S1}

We start with the energy estimates that basically mimick those obtained in Section~\ref{wWS3S1}. Let $(\vr,\vt,\vu)$ be a dissipative martingale solution to (\ref{Napd1}-\ref{Napd4})  constructed by means of Theorem \ref{wWT3}.
We intend to derive estimates which hold true uniformly in $\delta$.
The total dissipation balance (\ref{NTDB}), that can be derived exactly as in Section \ref{wWS3S1},  provides the following
uniform bounds, see \cite[Chapter 3, Section 3.7.1]{F} for details:

\begin{equation} \label{wWS47}
{\expe{ \left| \sup_{t \in [0,T]} \| \vr \|^{5/3}_{L^{5/3}_x} \right|^r } } +
\expe{ \left| \sup_{t \in [0,T]} \delta \|  \vr \|^\beta_{L^\beta_x} \right|^r }
\aleq c_1(r, \Lambda);
\end{equation}
\begin{equation} \label{wWS48}
\begin{split}
\expe{ \left| {\rm ess} \sup_{t \in [0,T]} \left\| \vr |\vu|^2 \right\|_{L^1_x} \right|^r +
{\left| \sup_{t \in [0,T]} \left\| \vr \vu \right\|^\frac{5}{4}_{L^{\frac{5}{4}}_x} \right|^r } } \aleq c_1(r, \Lambda );
\end{split}
\end{equation}
\begin{equation} \label{wWS49}
\expe{ \left\| \vu \right\|^{2r}_{L^2_tW^{1,2}_x} } \aleq c_2(r, \Lambda),
\end{equation}
\begin{equation} \label{wWS217aSS}
\expe{ \left| {\rm ess} \sup_{t \in [0,T]} \| \vt \|^4_{L^4_x} \right|^r }+\expe{\left\| \Grad \vt \right\|^{r}_{L^2((0,T)\times Q)} } \aleq c(r, \Lambda);
\end{equation}
\begin{equation} \label{wWS217SS}
\expe{ \left\| \frac{\kappa_\delta(\vartheta)}{\vartheta}\nabla\vartheta \right\|^{r}_{L^2((0,T)\times Q)} } \aleq c(r, \Lambda).
\end{equation}
\\
Finally, we report the conservation of mass principle
\begin{equation} \label{wWS411}
\| \vr(\tau, \cdot) \|_{L^1_x} = \intQ{ \vr(\tau, \cdot) } = \intQ{ \vr_0 } \leq \Ov{\vr} \ \mbox{for all}\ \tau\in[0,T].
\end{equation}

\subsection{Pressure estimates}
\label{wWS4S1S}

In order to derive refined estimates of the pressure, we apply a   method similar to  Section \ref{wWS3S2}. We consider
\[
\mathcal{B} \left[ b(\vr) - (b(\vr))_{Q} \right]
\]
as test function in the momentum equation (\ref{Napd2}). Here $b$ is a smooth function with moderate growth specified below.
Next, using the regularization method of DiPerna and Lions \cite{DL} we can show that $\vr$, $\vu$ satisfy the renormalized continuity equation
(cf. \cite[Chapter 2.2.5]{F}) holds
\begin{equation} \label{Npd1c}
\begin{split}
&\int_0^T \intQ{ \left[ b(\vr) \partial_t \psi + b(\vr) \vu \cdot \Grad \psi + \left(b(\vr) - \vr b'(\vr) \right) \Div \vu \psi  \right] }\dt \\
&=  - \intQ{ b(\vr_0) \psi(0) }
\end{split}
\end{equation}
for any test function $\psi \in \DC([0,T) \times R^3)$. In other words, the renormalized equation \eqref{Npd1c} provided $\vu$ has been extended to be zero
outside $Q$.
Consequently, we get
\begin{equation} \label{wWS412}
\begin{split}
\D \mathcal{B} [ b(\vr) - (b(\vr))_Q ] &= - \mathcal{B}[ \Div (b(\vr) \vu )] \dt \\
&\qquad+ \mathcal{B} \left[  \left( b(\vr) - b'(\vr) \vr \right) \Div \vu - \left( \left( b(\vr) - b'(\vr) \vr \right) \Div \vu
\right)_Q \right]\dt.
\end{split}
\end{equation}
Exactly as in Section \ref{wWS3S2} we deduce
\begin{equation} \label{wWS413}
\begin{split}
&\int_0^\tau \intQ{  p_\delta(\vr,\vt)
\left[ b(\vr) - (b(\vr) )_{Q} \right]    }\dt\\
&= \left[ \intQ{ \vr \vu \cdot \mathcal{B} \left[  b(\vr) - (b(\vr))_Q \right] } \right]_{t = 0}^{t=\tau}   - \int_0^\tau \intQ{ \vr \vu \otimes
\vu : \Grad \mathcal{B} \left[  b(\vr) - (b(\vr))_Q \right] } \dt \\
&\qquad+ \int_0^\tau \intQ{ \mathbb{S}(\vt,\Grad \vu) : \Grad \mathcal{B} \left[  b(\vr) - (b(\vr))_Q \right] } \dt- \int_0^\tau \intQ{ \vr \vu \cdot \mathcal{B} [ \Div( b(\vr) \vu)] } \dt \\
&\qquad+ \int_0^\tau \intQ{ \vr \vu \cdot \Del^{-1} \mathcal{B} \left[  \left(b'(\vr) \vr - b(\vr) \right) \Div \vu -
\left( \left(b'(\vr) \vr - b(\vr) \right) \Div \vu \right)_Q  \right] }\,\dif t
\\
&\qquad- \int_0^\tau \intQ{ \vr  {\vc {F}} (\vr,\vt, \vu) \cdot \mathcal{B} \left[  b(\vr) - (b(\vr))_Q \right] } \, \D \W \equiv \sum_{i=1}^6 I_i.
\end{split}
\end{equation}
Now, consider
$
b(\vr) = \vr^\alpha
$
where $0 < \alpha <  \frac{1}{3}$.
\\
The estimates for the ``deterministic'' integrals can be obtained exactly as in \cite[Sec. 4.5.2]{BFHbook} using the uniform bounds from the previous section.  We only give the details for the stochastic integral.
As a consequence of
(\ref{wWS411}), standard $L^q$-estimates for the inverse Laplacian, and the embedding relation $W^{1, q}(Q) \hookrightarrow C(\Ov{Q})$ for $q>3$,
\begin{equation} \label{wWS414}
\sup_{t \in [0,T]} \left\| \mathcal{B} \left[ b(\vr) - (b(\vr))_Q \right] \right\|_{L^\infty_x} \aleq 1 \ \pas
\end{equation}
where the norm is controlled by a deterministic constant proportional to $\Ov{\vr}$.
Next, we have by the Burgholder--Davis--Gundy inequality
\[
\begin{split}
&\expe{ \left| \int_0^\tau \intQ{ \vr  {\vc{F}} (\vr,\vt, \vu) \cdot \mathcal{B} \left[ b(\vr) - (b(\vr))_Q \right] } \, \D \W \right|^r } \\
&\leq \expe{ \sup_{t\in[0,\tau]}\left| \int_0^t \intQ{ \vr  {\vc {F}} (\vr, \vt, \vu) \cdot \mathcal{B} \left[ \vr^\alpha
- (\vr^\alpha)_Q \right] } \, \D \W \right|^r } \\
&\aleq \expe{ \int_0^\tau \sum_{k=1}^\8 \left|\, \intQ{\vr  {\bf {F}}_{k} (\vr, \vt, \vu) \cdot \mathcal{B} \left[ \vr^\alpha
- (\vr^\alpha)_Q \right] } \right|^2 \dif t}^{r/2},
\end{split}
\]
where, due to \eqref{P-1} and (\ref{wWS411}), \eqref{wWS414},
\[
\begin{split}
\left|\, \intQ{\vr  {\bf {F}}_{k} (\vr, \vt, \vu) \cdot \mathcal{B} \left[ \vr^\alpha
- (\vr^\alpha)_Q \right] } \right| &\aleq f_k \left\| \mathcal{B} \left[ \vr^\alpha
- (\vr^\alpha)_Q \right] \right\|_{L^\infty_x}
{\intQ{(\vr + \vr|\vu|) }}\\ \aleq c(\Ov{\vr}) f_k \intQ{ (\vr + \vr|\vu|) }.
\end{split}
\]
Consequently, we may infer, similarly to Section \ref{wWS3S2}, that
\begin{equation} \label{wWS422}
{\expe{ \left|\int_0^T  \intQ{  p_\delta(\vr,\vt) \vr^\alpha  } \dt \right|^r } \aleq c(\Ov{\vr},r) }
\end{equation}
for a certain $\alpha > 0$.

\subsection{Stochastic compactness method}
\label{wWS4S2}

Exactly as in Section \ref{CMIT}, we show the estimate
\begin{equation} \label{Nbv7bis}
\expe{ \| \vr \vu \|^r_{C^{s}([0,T]; W^{-k,2}(\Q)) } } \aleq c(r) \ \mbox{for a certain}\ 0 < s(r) < \frac{1}{2},\ k>\frac{3}{2}.
\end{equation}
\\
Now, following the arguments of Section \ref{subsub:stochep} based on Theorems
\ref{RDT4} and \ref{RDT3} we pass to the Skorokhod representation on the standard probability space
$([0,1], \Ov{\mathfrak{B}[0,1]}, \mathfrak{L})$ obtaining
a family of random variables $(\tvrd, \tvtd, \tvud)_{\delta > 0}$, together with the processes $\tilde{W}_\delta$ and
with the associated right-continuous complete filtration  $(\tilde{\mathfrak{F}}^\delta_t)_{t \geq 0}$
such that:
\begin{itemize}
\item
$\left[ \left( [0,1], \Ov{\mathfrak{B}[0,1]}, (\tilde{\mathfrak{F}}^\delta_t)_{t \geq 0} ,  \mathfrak{L}\right), \tvrd , \tvtd, \tvud,
\tilde{W}_\delta \right]$
is a weak martingale solution of problem (\ref{Napd1}--\ref{Napd4}), with the initial law $\Lambda_\delta$;
\item
the initial data satisfy $(\tilde{\vr}_{0,\delta}, \tilde{\vt}_{0,\delta}, \tilde{\vu}_{0,\delta})$ satisfy
\[
\begin{split}
\tilde{\vr}_{0,\delta} &\to \tilde{\vr}_0 \ \mbox{in} \ L^{1}(Q),\ \vr_0 \geq 0\\
\tilde{\vt}_{0,\delta}  &\to \tilde{\vt}_0 \ \mbox{in}\ L^1(Q),\ {\vt}_0 > 0,\\
\tilde{\vu}_{0,\delta} &\to \tilde{\vu}_0 \ \mbox{in}\ L^1(Q, R^3); 
\end{split}
\]
$\mathfrak{L}$-a.s.;
\item the functions $(\tvrd, \tvtd, \tvud)$ satisfy the bounds (\ref{wWS47}--\ref{wWS411}), (\ref{wWS422}), and (\ref{Nbv7bis}) $\mathfrak{L} \ \pas$
uniformly for $\delta \to \infty$;
\item we have
\begin{equation} \label{Ncv1bis}
\begin{split}
\tvrd &\to \tilde{\vr} \ \mbox{weakly in}\ L^{q}((0,T) \times Q) \ \ \mbox{for some}\
q > \frac{5}{3},  \mbox{weakly-(*) in}\ L^\infty(0,T; L^{5/3}(Q)),\\
\tvtd &\to \tilde\vt \ \mbox{weakly in}\ L^2(0,T; W^{1,2}(Q)), \ \mbox{weakly-(*) in} \
L^\infty (0,T; L^4(Q)),\\
\tvud &\to \tilde\vu \ \mbox{weakly in}\ L^2(0,T; W^{1,2}_0(Q;R^3)),\\
&\tvrd \tvud \to \tilde\vr \tilde\vu \ \mbox{in}\ C([0,T]; W^{-k,2}(Q; R^3)) \ k > \frac{3}{2},
\end{split}
\end{equation}
$\mathfrak{L}$-a.s.;
\item
the sequence $(\tvrd, \tvtd, \tvud, \Grad \tvtd, \Grad \tvud)_{\delta > 0}$ generates a Young measure;
\item we have
\[
\tilde{W}_\delta \to \tilde{W} \ \mbox{in}\ C([0,T]; \mathfrak{U}_0 )\ \mbox{a.s.}.
\]

\end{itemize}

\subsection{Compactness of the temperature and the density}

Following the strategy from Section
\ref{sec:vanishingviscosity} we can use the deterministic arguments specified in \cite[Chapter 3, Section 3.7.3]{F}
to show that
\bFormula{m257}
\tilde\vtd \to \tilde\vt \ \mbox{in} \ L^4((0,T) \times Q)\ \mathfrak{L}-\mbox{a.s.}
\eF
\\
Next, we easily adapt the arguments of Section \ref{subsec:strongconvdensity}
to the multipliers of the form
\[
\xi (t) \zeta(x) \Delta^{-1} \Grad \left( 1_Q T_k(\tvrd) \right),\ \mbox{and}
\ \xi (t) \zeta(x) \Delta^{-1} \Grad \left( 1_Q \Ov{T_k(\vr)} \right),
\]
where $T_k$ is a family of cut--off functions,
\[
\begin{split}
T_k(r) &= k T \left( \frac{r}{k} \right),\\
&T \in C^\infty[0, \infty),\
T(r) = \left\{ \begin{array}{l}  r \ \mbox{for}\ 0 \leq r \leq 1, \\ T''(r) \leq 0 \ \mbox{for}\
r \in (1,3), \\ 2 \ \mbox{for}\ r \geq 3.
\end{array} \right.
\end{split}
\]
\\
Using the same arguments as in Section \ref{subsec:strongconvdensity}, in particular when handling the limit in the stochastic integral,
we deduce {the effective viscous flux identity in the form}:
\begin{equation} \label{evfid}
\begin{split}
\Ov{p_M(\vr, \vt) T_k(\vr)}  &-
\left( \frac{4}{3} \mu(\vt) + \eta (\vt) \right)
\Ov{ T_k(\vr) \Div \vu) }  \\
&
=  \Ov{p_M(\vr, \vt)}\ \Ov{T_k(\vr) } - \left( \frac{4}{3} \mu(\vt) + \eta (\vt) \right) \Ov{T_k(\vr)} \Div\vu) \ \mathfrak{L}-\mbox{a.s.}
\end{split}
\end{equation}
see also \cite[Chapter 3, Section 3.7.4]{F} and \cite[Chap. 4.5]{BFHbook}.
\\
Relation (\ref{evfid}) in fact yields strong (a.a. pointwise) convergence of the density. Here, the argument is more involved than in Section \ref{subsec:strongconvdensity},
however, still purely deterministic. The reader may find a detailed proof in \cite[Chapter 3.7.5]{F}.
\\
Having established the strong convergence of the approximate densities, we easily perform the limit $\delta \to 0$ in the system (\ref{Napd1}--\ref{Napd4}) to recover the original
problem. We have proved Theorem \ref{thm:main}.

\section{{Conclusion, possible extensions, absence of stationary solutions}}
\label{conc}

For the sake of simplicity, we have assumed that the viscosity coefficients behave like linear functions of the temperature, see (\ref{m105}).
In particular, this gives rise to the velocity field living in the ``standard'' energy space $L^2(0,T; W^{1,2}_0(Q; R^3)$. A more elaborate treatment in the spirit of
\cite[Chapter 3]{F} would yield the existence result in the physically more relevant range of viscosities, namely,
\[
0< \underline{\mu}(1 + \vt^\alpha) \leq \mu(\vt) \leq \Ov{\mu} (1 + \vt^\alpha),\ \frac{2}{5} < \alpha \leq 1.
\]
Further generalizations of the constitutive assumptions as well as the underlying spatial domain and boundary conditions in the spirit of \cite[Chapter 3]{F}
are possible.

Finally, it is worth--noting that, unlike the simplified barotropic problem (see \cite{BFHM}), the full Navier--Stokes--Fourier system does not admit, in general,
\emph{stationary solutions} (that is, solutions which law is independent of time). This is a simple consequence of the total energy balance (\ref{eq0202bisE}). Indeed passing to expectations and assuming 
the total energy to be stationary, we get 
\[
\expe{ \int_{\tau_1}^{\tau_2} \intQ{ \frac{1}{2} \vr \sum_{k \geq 1} | \vc{F}_k (\vr, \vt, \vu ) |^2 } \dt 
+ \int_{\tau_1}^{\tau_2} \intQ{ \vr H } \dt } = 0
\]
for a.a. $0 \leq \tau_1 < \tau_2$ $\pas$. Thus if $H \geq 0$, the driving force must vanish identically a.s. along the paths of any stationary solution. This is in strong contrast to the barotropic case, where the existence of (non-trivial) stationary martingale solutions has been recently shown in \cite{BFHM}.

\medskip

\centerline{\bf Acknowledgement}

The research of E.F.~leading to these results has received funding from the
European Research Council under the European Union's Seventh
Framework Programme (FP7/2007-2013)/ ERC Grant Agreement
320078. The Institute of Mathematics of the Academy of Sciences of
the Czech Republic is supported by RVO:67985840.


\begin{thebibliography}{19}
\bibitem{Amann1}
H.~Amann.
\newblock Nonhomogeneous linear and quasilinear elliptic and parabolic boundary
  value problems.
\newblock In {\em Function spaces, differential operators and nonlinear
  analysis ({F}riedrichroda, 1992)}, volume~{\bf 133} of {\em Teubner-Texte
  Math.}, pages 9--126. Teubner, Stuttgart, 1993.


\bibitem{bensoussan} A. Bensoussan, Stochastic Navier-Stokes equations, Acta Appl. Math., 38 (3) (1995), 267-304.
\bibitem{BeTe} A. Bensoussan, R. Temam, \'{E}quations stochastiques du type Navier-Stokes. J. Functional Analysis 13 (1973), 195--222.
\bibitem{berthvovelle} F. Berthelin, J. Vovelle, Stochastic isentropic Euler equations, arXiv:1310.8093.
\bibitem{BFHa} D. Breit, E. Feireisl \& M. Hofmanov\'{a}: \emph{Incompressible limit for compressible fluids with stochastic forcing.} Arch. Rational Mech. Anal. 222, 895--926. (2016)
\bibitem{BFH} D. Breit, E. Feireisl \& M. Hofmanov\'{a}: \emph{Compressible fluids driven by stochastic forcing: The relative energy inequality and applications.}
Commun. Math. Phys. 350, 443--473. (2017)
\bibitem{BFHbook} D. Breit, E. Feireisl \& M. Hofmanov\'{a}: \emph{Stochastically forced compressible fluid flows.}
De Gruyter (in press)
\bibitem{BFHM} D. Breit, E. Feireisl, M. Hofmanov\'{a} \& B. Maslowski: \emph{Stationary solutions to the compressible Navier--Stokes system driven by stochastic forcing.} Preprint at arXiv:1703.03177v1 
\bibitem{BrHo} D. Breit \& M. Hofmanov\'a: Stochastic Navier--Stokes equations for compressible fluids. Indiana Univ. Math. J. 65, 1183--1250. (2016)
\bibitem{on1} Z. Brze\'zniak, M. Ondrej\'at, Strong solutions to stochastic wave equations with values in Riemannian manifolds, J. Funct. Anal. 253 (2007) 449-481.
\bibitem{daprato} G. Da Prato, J. Zabczyk, Stochastic Equations in Infinite Dimensions, Encyclopedia Math. Appl., vol. 44, Cambridge University Press, Cambridge, 1992.
\bibitem{debussche1} A. Debussche, N. Glatt-Holtz, R. Temam, Local Martingale and Pathwise Solutions for an Abstract Fluids Model, Physica D 14-15 (2011), 1123-1144.
\bibitem{DL} R. J. DiPerna and P.-L. Lions, Ordinary differential equations, transport theory and
Sobolev spaces, Invent. Math. 98 (1989), 511--547.
\bibitem{Eb} Ebin, David G. Viscous fluids in a domain with frictionless boundary. Global analysis--analysis on manifolds, 93--110, Teubner-Texte Math., 57, Teubner, Leipzig, 1983.
\bibitem{fei3} E. Feireisl, Dynamics of Compressible Flow, Oxford Lecture  Series in Mathematics and its Applications, Oxford University Press, Oxford, 2004.



\bibitem{feireisl2} E. Feireisl, B. Maslowski, A. Novotn\'y, Compressible fluid flows driven by stochastic forcing, J. Differential Equations 254 (2013) 1342-1358.
\bibitem{F} E. Feireisl, A. Novotn\'{y} (2009): \emph{Singular limits in thermodynamics of viscous fluids.} Birkh\"auser-Verlag, Basel, 2009.
\bibitem{FeiNov10} E.Feireisl, A.Novotn{\' y}, Weak-strong uniqueness property for the full {N}avier-{S}tokes-{F}ourier system ,Arch. Rational Mech. Anal.
2014 (2012) 683--706.
\bibitem{feireisl1} E. Feireisl, A. Novotn\'y, H. Petzeltov\'a, On the existence of globally defined weak solutions to the Navier-Stokes equations, J. Math. Fluid. Mech. 3 (2001) 358-392.
\bibitem{FP20} E. Feireisl, H. Petzeltov\'a, H., On the long-time behaviour of solutions to the
              {N}avier-{S}tokes-{F}ourier system with a time-dependent
              driving force, J. Dynam. Differential Equations, 19 (2007) 685--707.

\bibitem{FeSu2015_N1} E. Feireisl, Y. Sun, Conditional regularity of very weak solutions to the
              {N}avier-{S}tokes-{F}ourier system. In Recent advances in partial differential equations and
              applications, Contemp. Math. 666, pages 179--199, Amer. Math. Soc., Providence, RI, 2016

\bibitem{Fl2} F. Flandoli (2008): An introduction to 3D stochastic fluid dynamics. In SPDE in Hydrodynamic:
Recent Progress and Prospects. Lecture Notes in Math. 1942 51--150. Springer,
Berlin.
\bibitem{franco} F. Flandoli, A. Mahalov, Stochastic three-dimensional rotating Navier-Stokes equations: averaging, convergence and regularity, Arch. Rational Mech. Anal. 205 (2012) 195--237.
\bibitem{Gra}  Grafakos, Loukas Classical Fourier analysis. Second edition. Graduate Texts in Mathematics, 249. Springer, New York, 2008. xvi+489 pp.
\bibitem{HIPR}
M.~Hieber and J.~Pr{\"u}ss.
\newblock Heat kernels and maximal {$L^p$-$L^q$} estimates for parabolic
  evolution equations.
\newblock {\em Commun. Partial Differential Equations}, {\bf
  22}(9,10):1647--1669, 1997.
\bibitem{jakubow} A. Jakubowski, The almost sure Skorokhod representation for subsequences in nonmetric spaces, Teor. Veroyatnost. i Primenen 42 (1997), no. 1, 209-216; translation in Theory Probab. Appl. 42 (1997), no. 1, 167-174 (1998).
\bibitem{krylov} I. Gy\"{o}ngy, N. Krylov, Existence of strong solutions for It\^o's stochastic equations via approximations, Probab. Theory Related Fields 105 (2) (1996) 143-158.
\bibitem{LSU} O. A. Ladyzhenskaya, V. A. Solonnikov, and N. N. Ural'tseva.
Linear and quasilinear equations of parabolic type. Translated from
the Russian by S. Smith. Translations of Mathematical Monographs,
Vol. 23. American Mathematical Society, Providence, R.I., 1967.
\bibitem{Li2} P.\,L. Lions, Mathematical topics in fluid mechanics. Vol. 2. Compressible models. Oxford Lecture Series in Mathematics and its Applications, 10. Oxford Science Publications, The Clarendon Press, Oxford University Press, New York, 1998.
\bibitem{lu} A. Lunardi, Analytic semigroups and optimal regularity in parabolic problems, Birkh\"auser, Berlin, 1995.
\bibitem{romeo} P.R. Mensah: Existence of martingale solutions and the incompressible limit for stochastic compressible flows on the whole space. Ann. Mat. Pura Appl., DOI 10.1007/s10231-017-0656-1.
\bibitem{mikul} R. Mikulevicius, B.\,L. Rozovskii, Stochastic Navier-Stokes equations for turbulent flows, SIAM J. Math. Anal. 35 (5) (2004) 1250-1310.
\bibitem{novot} A. Novotn\'y, I. Stra\v{s}kraba, Introduction to the Mathematical Theory of Compressible Flow, Oxford Lecture Series in Mathematics and its Applications, Oxford University Press, Oxford, 2004.
\bibitem{on2} M. Ondrej\'at, Stochastic nonlinear wave equations in local Sobolev spaces, Electronic Journal of Probability 15 (33) (2010) 1041-1091.
\bibitem{Pe} P. Pedregal. Parametrized measures and variational principles. Birkh\"auser,
Basel, 1997.
\bibitem{pr07} C. Pr\'ev\^ot, M. R\"ockner, A concise course on stochastic partial differential equations, vol. 1905 of Lecture Notes in Math., Springer, Berlin, 2007.

\bibitem{2015arXiv150400951S}
S.~{Smith}.
\newblock {Random Perturbations of Viscous Compressible Fluids: Global
  Existence of Weak Solutions}.
\newblock arXiv:1504.00951v1.

\bibitem{SmTr} S. Smith, K. Trivisa.
\newblock {The stochastic Navier--Stokes equations for heat-conducting, compressible fluids: global existence of weak solutions.}
\newblock J. Evol. Equ. https://doi.org/10.1007/s00028-017-0407-1

\bibitem{MR1760377}
E.~Tornatore and H. ~F.~Yashima.
\newblock One-dimensional stochastic equations for a viscous barotropic gas.
\newblock {\em Ricerche Mat.}, 46(2), 255--283 (1998), 1997.

\bibitem{MR1807944}
E.~Tornatore.
\newblock Global solution of bi-dimensional stochastic equation for a viscous
  gas.
\newblock {\em NoDEA Nonlinear Differential Equations Appl.}, 7(4), 343--360,
  2000.
\end{thebibliography}
\end{document}